\documentclass[a4paper,UKenglish,cleveref,nolineno,autoref]{socg-lipics-v2021}

\pdfoutput=1 


\title{Fast free resolutions of bifiltered chain complexes}


\author{Ulrich Bauer}{Department of Mathematics and MDSI and MCML, Technical University of Munich, Germany}{mail@ulrich-bauer.org}{orcid.org/0000-0002-9683-0724}{}

\author{Tamal K. Dey}{Department of Computer Science, Purdue University, USA}{tamaldey@purdue.edu}{https://orcid.org/0000-0001-5160-9738}{NSF grants
DMS-2301360 and CCF-2437030}

\author{Michael Kerber}{Institute of Geometry, Graz University of Technology, Austria}{kerber@tugraz.at}{https://orcid.org/0000-0002-8030-9299}{Austrian Science Fund (FWF) grant 10.55776/P33765}
\author{Florian Russold}{Institute of Geometry, Graz University of Technology, Austria}{russold@tugraz.at}{https://orcid.org/0009-0003-2978-0477}{Austrian Science Fund (FWF) grants 10.55776/P33765 and 10.55776/W1230}
\author{Matthias S\"ols}{Department of Mathematics, Technical University of Munich, Germany and Institute of Geometry, Graz University of Technology, Austria}{matthias.soels@tum.de}{Add orcid number}{Austrian Science Fund (FWF) grants 10.55776/P33765 and 10.55776/W1230}

\authorrunning{U. Bauer, T. Dey, M. Kerber, F. Russold, M. Söls}

\Copyright{TODO}

\ccsdesc{Mathematics of computing~Topology}
\keywords{Topological Data Analysis, Multi-Parameter Persistence, Multi-Critical Bifiltrations}

\category{} 

\relatedversion{} 

\supplement{The C++ library \textsc{multi-critical} is available at: \url{https://bitbucket.org/mkerber/multi_critical/src/main/}. The benchmark files are available upon request.  }


\acknowledgements{}

\nolinenumbers 

\EventEditors{John Q. Open and Joan R. Access}
\EventNoEds{2}
\EventLongTitle{42nd Conference on Very Important Topics (CVIT 2016)}
\EventShortTitle{CVIT 2016}
\EventAcronym{CVIT}
\EventYear{2016}
\EventDate{December 24--27, 2016}
\EventLocation{Little Whinging, United Kingdom}
\EventLogo{}
\SeriesVolume{42}
\ArticleNo{23}

\newcommand{\ignore}[1]{}
\newcommand{\define}[1]{\emph{#1}} 
\newcommand{\field}{\Bbbk}

\newcommand{\im}{\mathrm{im}\;}

\newcommand{\R}{\mathbb{R}}

\newcommand{\minus}{\mathbin{\scalebox{0.6}[0.9]{$-$}}}

\usepackage{tikz-cd}
\usetikzlibrary{arrows}

\usepackage{enumitem}

\usepackage[ruled,vlined]{algorithm2e}

\usepackage{thm-restate}
\usepackage{adjustbox}
\usepackage{mathtools}

\hideLIPIcs

\begin{document}

\maketitle

\begin{abstract}
In a $k$-critical bifiltration, every simplex enters along a staircase with at most $k$ steps. Examples with $k>1$ include degree-Rips bifiltrations and models of the multicover bifiltration. We consider the problem of converting a $k$-critical bifiltration into a $1$-critical (i.e. free) chain complex with equivalent homology. This is known as computing a free resolution of the underlying chain complex and is a first step toward post-processing such bifiltrations.

We present two algorithms. The first one computes free resolutions corresponding to path graphs and assembles them to a chain complex by computing additional maps. The simple combinatorial structure of path graphs leads to good performance in practice, as demonstrated by extensive experiments. However, its worst-case bound is quadratic in the input size because long paths might yield dense boundary matrices in the output. Our second algorithm replaces the simplex-wise path graphs with ones that maintain short paths which leads to almost linear runtime and output size.

We demonstrate that pre-computing a free resolution speeds up the task of computing a minimal presentation of the homology of a $k$-critical bifiltration in a fixed dimension. Furthermore, our findings show that a chain complex that is minimal in terms of generators can be asymptotically larger than the non-minimal output complex of our second algorithm in terms of description size.
\end{abstract}

\section{Introduction}
\subparagraph{Motivation and problem statement.}
Multi-parameter persistence is a branch of topological data analysis
where a data set (e.g., a point cloud) is filtered with respect to two or more parameters
and the topological evolution of the data when changing the parameters is analyzed.
In this context,
the first step of a computational pipeline for two parameters typically consists of the computation of a
\define{bifiltration} of simplicial complexes, that is, a family of simplicial complexes indexed by $\mathbb N^2$
that grows when increasing the parameters.
We refer to the parameter set $\mathbb N^2$ as the \define{grades} of the bifiltration.
A bifiltration can be equivalently described by
determining the \define{support} of each simplex, that is, the set of grades at which the
simplex is part of the complex.
A bifiltration is called \define{free} or
\define{$1$-critical} if the support
of every simplex is a \define{principal upset}, that is, the upward closure of a single element in the parameter space.
More generally, a bifiltration is \define{$k$-critical} if the support of a simplex
is the upward closure of at most $k$ elements; see Figure~\ref{fig:1_crit_k_crit} for a visualization.

\begin{figure}
  \includegraphics[width=6cm]{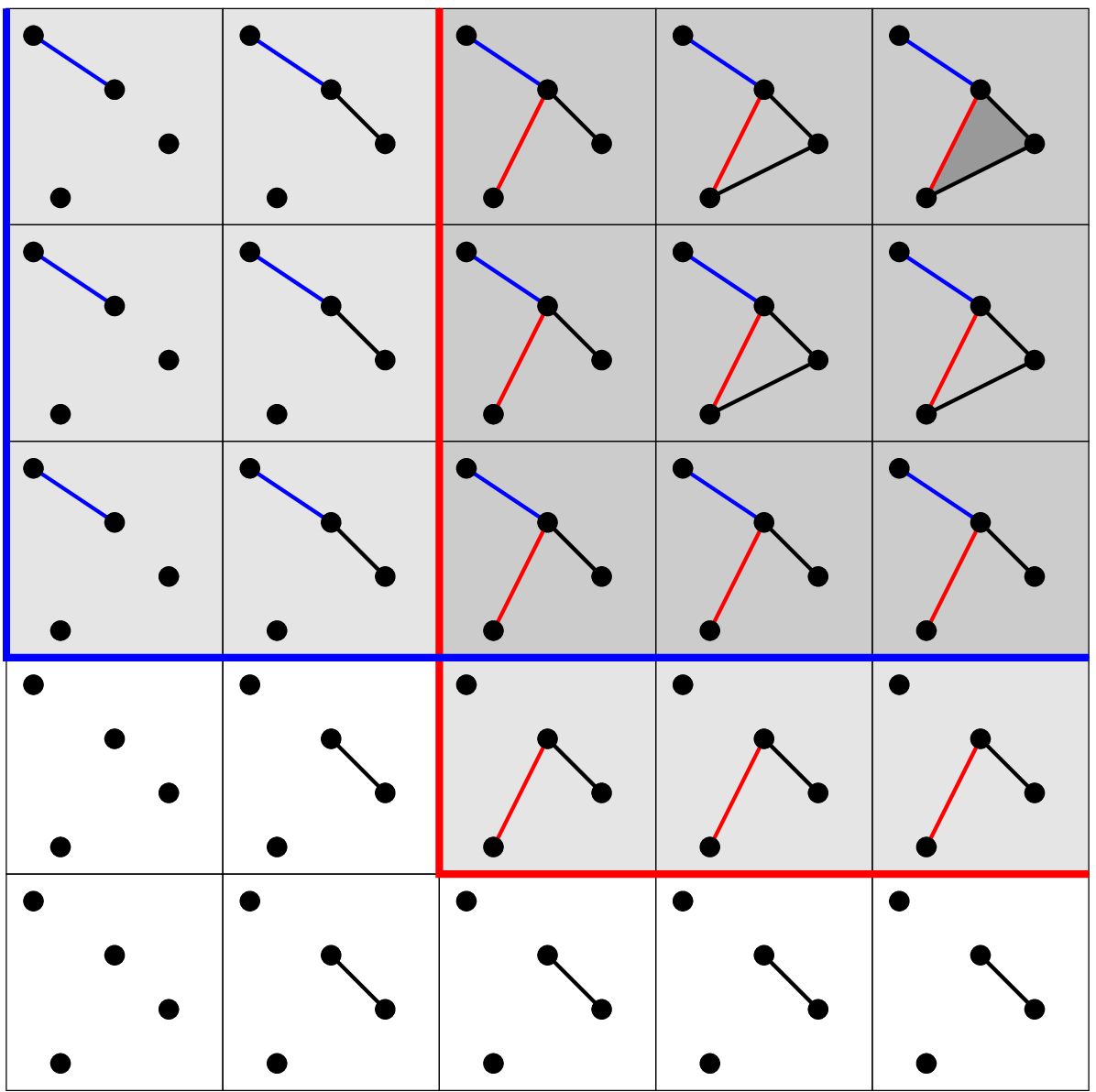}
  \hspace{1cm}
  \includegraphics[width=6cm]{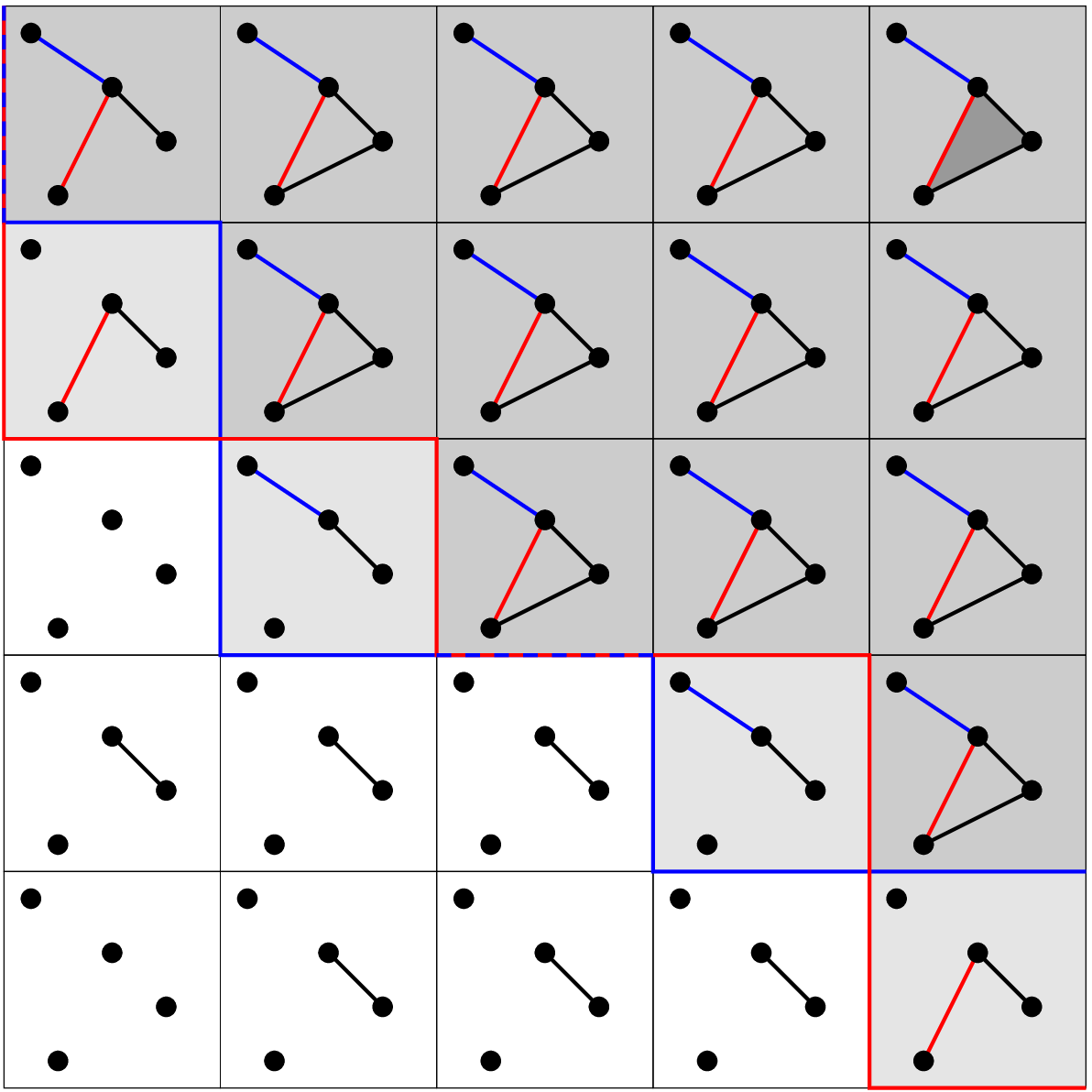}
  \caption{The bifiltration on the left is $1$-critical while the bifiltration on the right is $3$-critical. The support of every simplex is an upset, as visualized for two edges (red and blue, respectively).}
  \label{fig:1_crit_k_crit}
\end{figure}

A bifiltration gives rise to a bifiltered chain complex $C_\bullet$
\begin{equation*}
\begin{tikzcd}
0 & C_0 \arrow[l] & C_1 \arrow[l, "\partial_1"'] & \cdots \arrow[l] & C_{d-1} \arrow[l] & C_d \arrow[l, "\partial_d"']
\end{tikzcd}
\end{equation*}
with the $k$-simplices at a given grade forming the basis
of the $k$-chains at that grade. 
Free and $k$-critical filtered chain complexes are defined analogously.

Bifiltered chain complexes arise naturally in various constructions for topological data analysis, and several constructions produce bifiltrations that are not free.
The most prominent example is given by the degree-Rips bifiltration~\cite{lw-rivet,rolle-socg,rs-stable,bl-stability}.
Other examples
are the approximate multi-cover bifiltration~\cite{alonso-sparse} and Delaunay core bifiltration~\cite{Core_bifilt}.
On the other hand, free bifiltrations are most suitable for algorithmic and computational purposes.
There are fast algorithms for minimizing a bifiltered chain complex without changing its homology~\cite{fk-chunk,fkr-compression,ms-computing}
as well as for computing the homology of a chain complex in terms of a minimal presentation~\cite{lw-computing,kr-fast,fkr-compression,bll-efficient}, 
but both approaches require free chain complexes as input.

In homological algebra, a standard technique is to replace a general chain complex by a free one, connected to the original chain complex by a chain map that induces an isomorphism in homology (a \define{quasi-isomorphism}).
The free chain complex together with the chain map is called a \define{free resolution} of the original complex.
The problem studied in this paper is how to efficiently compute a free resolution of a non-free bifiltered chain complex. 

At the homology level, one way to address the case of a non-free bifiltration was proposed by Chacholski,
Scolamiero and Vaccarino~\cite{swedish}: given a segment  $C:C_{m+1}\to C_m\to C_{m-1}$ of a chain complex, the authors describe an algorithm to compute a free chain complex
$F:F_{m+1}\to F_m\to F_{m-1}$ such that $H_m(C)$ and $H_m(F)$ are isomorphic, that is, a \define{free implicit representation}.
This algorithm suffices if one is interested in a presentation
of $H_m(C)$ for further processing. Methods to compute a projective implicit representation from families of simplicial complexes and general simplicial maps have been developed in \cite{dey2025computingprojectiveimplicitrepresentations,dey_et_al:LIPIcs.SoCG.2024.51}, where, as in our approach, graph theoretic methods are used to speed up computations.

There are good reasons to work on the level of entire chain complexes instead.
First of all, there are potential computational advantages, especially if one is interested in multiple homology dimensions (see Section \ref{sec:experiments}). 
Moreover, the chain complex structure can encode subtle information on the data that is lost at the homology level: for example, two chain complexes may have isomorphic homology in every dimension without being quasi-isomorphic. 

Computing free resolutions of general chain complexes is a standard task in computational algebra, available in the computer algebra software \textsc{Macaulay2} for a much wider range of chain complexes.
In the context of applied topology, we are mostly interested in very large bifiltered chain complexes with millions of generators but a simple combinatorial structure (i.e., simplicial boundary maps).
The goal of this paper is to develop specialized and highly optimized algorithms for this type of input data, which the general purpose algorithms implemented in existing computer algebra systems are not tailored for.

\subparagraph{Contributions.}
Our main contribution is to propose two algorithms to compute a free resolution of a bifiltered chain complex. 
Both algorithms rely on the same simple idea of expanding any $k$-critical simplex in dimension $p$ into a sequence of $k$ free copies, with consecutive copies related by $(p+1)$-dimensional elements at the join of their grades.
In algebraic terms, this corresponds to a free resolution of the upset module associated to the simplex.
In order to construct a valid total complex,
the algorithms introduce further maps to establish the chain complex property while maintaining quasi-isomorphism to the original filtered complex.
Finding these maps is computationally inexpensive and takes place at a purely combinatorial level.

The algorithms differ mainly in the choice of free resolution of the upset modules. 
In the first algorithm, the \define{path algorithm}, the free resolution corresponds to the chain complex of a filtered path graph (as a simple special case of a cellular resolution \cite{bayer_sturmfels_cellular,miller_sturmfels_book}).
We give an example of a family of simplicial bifiltrations so that the resulting free chain complex has a dense boundary matrix (for any choice of basis).
This example shows tightness of the worst-case runtime $O(n^2)$, where $n$ is the number of input generators (description size).

The second algorithm, the \define{log-path algorithm}, extends the path algorithm by adding additional relations to the free resolution of a simplex, such that any pair of copies of a simplex is connected via a sequence of relations of logarithmic length.
This ensures sparsity of the boundary matrices in the output chain complex, but also requires adding further higher relations (\define{syzygies}). 
Again, further maps are required to establish the chain complex property of the resulting total complex, allowing the algorithm to maintain sparsity and obtain a resolution with worst-case run time in $O(n\log^2n)$.

Our findings lead to  an interesting observation: as shown by our worst case example, a minimal resolution may require dense boundary matrices, while a non-minimal resolution may actually admit a sparse matrix representation, with asymptotically fewer non-zero entries ($O(n\log^2n)$ instead of $O(n^2)$, where $n$ is the description size of the complex).
This observation suggests that minimizing chain complexes does not necessarily speed-up subsequent algorithmic tasks, at least in certain worst-case examples.

We provide implementations of the path and log-path algorithms, in addition to the Chacholski--Scolamiero--Vaccarino algorithm for computing free implicit representations of homology.
Systematic tests on various $k$-critical bifiltrations show that the overhead of the log-path algorithm over the path algorithm does not exceed a factor of 3 in run time and a factor of 2 in the number of non-zero entries for our examples, while showing the expected improvement on the mentioned worst-case examples. 
Subsequently minimizing the free chain complex yields a further significant reduction of the size.
Furthermore, we consider the task of computing minimal presentations of homology in all degrees, comparing the approach of first computing a ``global'' free resolution with the approach of computing free implicit representations.
Our results show a clear computational advantage for the global approach.
Remarkably, for some instances, computing \emph{all} minimal
presentations using a free resolution is faster than computing a \emph{single} minimal presentation using the Chacholski--Scolamiero--Vaccarino algorithm.

\section{Bifiltered chain complexes}
\label{sec:Background}

\subparagraph{Bifiltrations.} A \define{simplicial bifiltration} $\mathcal{K}$ is an abstract simplicial complex together with a collection of subcomplexes $(\mathcal{K}_s)_{s\in \mathbb{N}^2}$ such that $\mathcal{K}_s\subseteq \mathcal{K}_t$ whenever $s\leq t$ (which means that $s_1\leq t_1$ and $s_2\leq t_2$).
As shown in Figure \ref{fig:1_crit_k_crit}, each simplex $\sigma$ enters $\mathcal{K}$ along a staircase bounding the \define{support} of $\sigma$, denoted $\mathrm{supp}(\sigma)$, which is an \define{upset} (an upward closed subset of $\mathbb{N}^2)$.
There is a unique minimal set of grades
$\mathcal{G}(\sigma):=\{x_1,\dots,x_m\}\subseteq \mathbb{N}^2$ 
such that $\mathrm{supp}(\sigma)=\{ s\in \mathbb{N}^2\mid \exists x_i\leq s\}$.
If $m\leq k$ for each $\sigma \in \mathcal{K}$, then the bifiltration is \define{$k$-critical}.

\subparagraph{Bipersistence modules.}

Simplicial bifiltrations give rise to bifiltered chain complexes. 
We first describe the elementary building blocks.
Each $p$-simplex $\sigma$ has a support $\mathrm{supp}(\sigma)$ with minimal generating set $\mathcal{G}(\sigma)=\{x_1,\dots,x_m\}$,
determining an \define{upset module} $U_{\{x_1,\dots,x_m\}}$ (which we also denote by $U_\sigma$ for brevity) given by 
\begin{equation*}
\begin{aligned}
(U_\sigma)_s &:= 
\begin{cases}
\field, & \text{if } \exists\, x_i \leq s, \\[6pt]
0 & \text{otherwise}
\end{cases}
&
\quad \text{and}\qquad
(U_\sigma)_{s, t} &= 
\begin{cases}
\mathrm{id}_\field, & \text{if } \exists\, x_i \leq s, \\[6pt]
0 & \text{otherwise}
\end{cases}
\end{aligned}
\end{equation*}
as illustrated in Figure \ref{fig:single_simplex}.
%
This is a special case of a \define{bipersistence module} $M$, which is a family $(M_s)_{s\in\mathbb{N}^2}$ of vector spaces over a field $\field$ together with a family of homomorphisms $(M_{s,t}\colon M_s \rightarrow M_t)_{s\leq t\in \mathbb{N}^2}$.
Although our results extend to arbitrary fields straightforwardly, we will stick to the case $\field=\mathbb{Z}_2$ in order to simplify the exposition.
A \define{morphism of bipersistence modules} $\varphi:M\rightarrow N$ is a natural transformation, that is, a family of linear maps $(f_s)_{s\in \mathbb{N}^2}$ that commute with the structure maps of $M$ and $N$: $f_t \circ M_{s,t} = N_{s,t}\circ f_s$ for $s\leq t$. 

A direct sum of upset modules, each generated by a single grade, is called \define{free}.
A \define{basis} of a free bipersistence module $F$ is a set of elements $\{b^{x_1},\dots,b^{x_n}\}$
where each $b^{x_i} \in F_{x_i}$, such that every $v \in F_y$, for any grade $y$, can be written in unique way as a linear combination of images of the basis elements under the structure maps (which are inclusions into the vector space $\bigcup_z F_z$).
Note that the grades appearing in this linear combination with a non-zero coefficient must be less or equal to $y$.
A morphism $f$ between free modules is then determined by the images of the basis elements for the domain, and can  be encoded by a matrix $[f]$.

\begin{figure}
\centering
  \includegraphics[width=8cm]{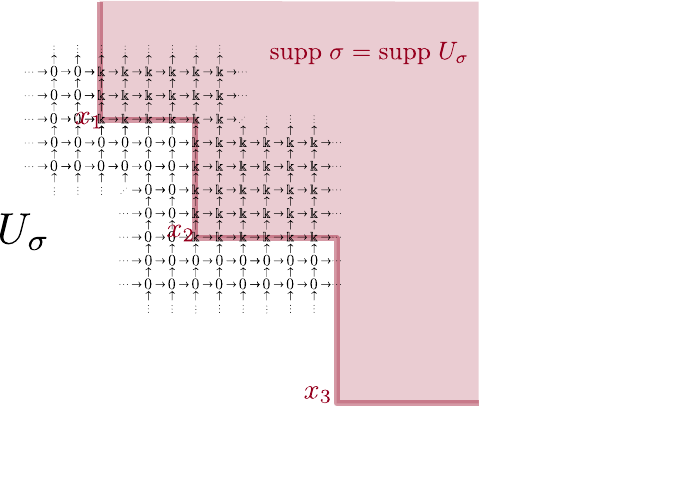}
  \caption{The upset module $U_\sigma$ induced by the simplex $\sigma$.}  
  \label{fig:single_simplex}
\end{figure}

\subparagraph{Chain complexes.} 
The upset modules associated to the simplices assemble to 
\begin{equation*}
\begin{tikzcd}
0 & C_0 \arrow[l] & C_1 \arrow[l,swap,"\partial_{1}"] & \cdots \arrow[l,swap,"\partial_2"] & C_{i-1} \arrow[l,swap,"\partial_{i-1}"] & C_{i} \arrow[l,swap,"\partial_{i}"] & C_{i+1} \arrow[l,swap,"\partial_{i+1}"] & \cdots \arrow[l]
\end{tikzcd},
\end{equation*}
which is a \define{bifiltered chain complex} with $C_i:= \bigoplus_{\sigma \in \mathcal{K}^{(i)}} U_\sigma$
and \define{boundary maps} $\partial_i:C_i \rightarrow C_{i-1}$ inherited from the simplicial complex $\mathcal{K}$, satisfying $\partial_{i}\circ \partial_{i+1}=0$. The notion of a $k$-critical bifiltered chain complex is defined analogously to the setting of a simplicial bifiltration.

Even though a bifiltered chain complex has a simple combinatorial structure, a \define{free chain complex} (a chain complex $F_\bullet$ with each $F_i$ free) is preferred in computational (and algebraic) settings.
This motivates the goal of finding a \define{free resolution}, that is, a free chain complex together with a chain map $F_\bullet\rightarrow C_\bullet$ that induces an isomorphism $H_i(F_\bullet)\cong H_i(C_\bullet)$ on homology for each $i\geq 0$.
Such a chain map is called a \define{quasi-isomorphism}.

\subparagraph{Data representation.} The list of all simplices of a bifiltration $\mathcal{K}$, each with a lexicographically ordered list of the minimal generating set of its support, and a list of its facets gives a full combinatorial description of $\mathcal{K}$ and will serve as the \define{input} to our algorithms.
This data representation generalizes to regular cell complexes, and in our case to bifiltered chain complexes where $\field=\mathbb{Z}_2$.
For the simplicial case, the number of facets is constant for each simplex dimension, ensuring sparsity.
Therefore the description size of the input is given, up to a constant factor, by the total number of generators in this list. 
Our \define{output} is a free resolution $F_\bullet$ of our input chain complex $C_\bullet$, again represented in the same data format.

\section{Free implicit representations of homology}
\label{sec:FI-reps}

\subparagraph{FI-reps.} To compute the homology $H_m(C_\bullet)$ of a $k$-critical chain complex in a chosen dimension $m$, Chacholski, Scolamiero, and Vaccarino \cite{swedish} provide a procedure
to construct a free chain complex segment $X \xleftarrow{f} Y \xleftarrow{g} Z $ from the input segment
\begin{equation}
\label{diag:chain_complex_segment}
C_{m-1}\xleftarrow{\partial_m} C_m \xleftarrow{\partial_{m+1}}C_{m+1}.
\end{equation}
such that $H_m(C_\bullet)\cong \ker f / \im g$. The pair of graded matrices $([f],[g])$ is called a \define{free implicit representation (FI-rep)} of $H_m(C_\bullet)$ \cite{lw-computing}, and it serves as input for the computation of minimal presentations of $H_m(C_\bullet)$ \cite{lw-computing,kr-fast}.

\subparagraph{The Chacholski--Scolamiero--Vaccarino algorithm.} We consider an input complex $C_\bullet$ induced by a simplicial bifiltration $\mathcal{K}$.
The first step is to extend each $U_\sigma$ in $C_{m-1}$, induced by an $(m-1)$-simplex $\sigma$, to a free upset module $U_{\{0\}}$.
The resulting free module $D_{m-1}\coloneqq 
\bigoplus_{\sigma\in \mathcal{K}^{(m-1)}} U_{\{0\}}$ , whose basis we denote by $\{b_\sigma^0\}_{\sigma\in \mathcal{K}^{(m-1)}}$, contains $C_{m-1}$ as a submodule, since the support of $U_\sigma$ is contained in the support of $U_{\{0\}}$.
Thus, postcomposing $\partial_m$ with this submodule inclusion does not affect the kernel: we have $\ker\partial_{m} = \ker\iota\circ\partial_{m}$.

As a second step, we \define{cover} each $U_\sigma$ in $C_{m+1}$, induced by an $(m+1)$-simplex $\sigma$ by a free bipersistence module.
This means that we replace each upset module $U_\sigma$ by the free module 
$
G_\sigma\coloneqq 
\bigoplus_{x_i \in \mathcal{G}(\sigma)} U_{\{x_i\}}
$,
connected to the upset module by a surjection $G_\sigma \twoheadrightarrow U_\sigma$ since the support of $G_\sigma$ equals the support of $U_\sigma$.
We call the basis elements $g^{x_i}_\sigma$ the \define{generators} of $
U_\sigma$.
The module $G_{m+1}$ arises from $C_{m+1}$ by covering $U_\sigma$ by $G_\sigma$ for each $(m+1)$-simplex via a canonical surjection \begin{tikzcd}G_{m+1}\arrow[r,two heads,"\alpha_{m+1}"] & C_{m+1} \end{tikzcd}.
Hence, precomposing $\partial_{m+1}$ with $\alpha_{m+1}$ does not affect the image.
The input \eqref{diag:chain_complex_segment} can now be replaced by \eqref{eq:segment-step2} which has isomorphic homology.
\begin{equation}
\label{eq:segment-step2}
\begin{tikzcd}
D_{m-1} &[10pt] C_m \arrow[l,swap,"\iota\circ\partial_{m}"] &[20pt] G_{m+1} \arrow[l,swap,"\partial_{m+1}\circ \alpha_{m+1}"] ,
\end{tikzcd}
\end{equation}

As the third and final step it remains to replace $C_m$. 
We first construct $G_m$ analogously to $G_{m+1}$, by substituting each $U_\sigma$ in $C_m$ by $G_\sigma$.
Precomposing $\iota\circ\partial_{m}$ with the surjection \begin{tikzcd}G_{m}\arrow[r,two heads,"\alpha_{m}"] & C_{m} \end{tikzcd} yields a map $\gamma\coloneqq \iota\circ\partial_m\circ\alpha_m\colon G_m\rightarrow D_{m-1}$, as depicted in \eqref{eq:csv_main_diag}.
For $\sigma\in \mathcal{K}^{(m)}$ with $\partial\sigma=\tau_0+\cdots +\tau_m$, the resulting map $\gamma$ sends a generator $g_\sigma^x$ in $G_m$ to $b^0_{\tau_0}+\cdots + b^0_{\tau_m}$ in $D_{m-1}$.
\begin{equation} \label{eq:csv_main_diag}
\begin{tikzcd} 
& & R_{m} \arrow[d,"p^1_m"] \\
& & G_m \arrow[d,"\alpha_{m}"] \arrow[dll,swap,dashed,yshift=2pt,"\gamma"]  & G_{m+1} \arrow[d,"\alpha_{m+1}"] \arrow[l,swap,dashed,"f_{m+1}^0"] \\
D_{m-1} & C_{m-1} \arrow[l,swap,"\iota"] & C_m \arrow[l,swap,"\partial_{m}"] & C_{m+1} \arrow[l,swap,"\partial_{m+1}"]
\end{tikzcd}
\end{equation}
                            
Replacing $C_m$ by $G_{m}$ makes it necessary to also replace the map $\partial_{m+1}\circ \alpha_{m+1} \colon G_{m+1}\rightarrow C_m$ by a map $f_{m+1}^0\colon G_{m+1}\rightarrow G_m$, representing $\partial_{m+1}$ on the generators and also making the diagram commute.
This map $f_{m+1}^0$ sends each generator $g_\sigma^x$ of an $(m+1)$-simplex $\sigma$ with $\partial\sigma=\tau_0+\cdots +\tau_m$ to $f_{m+1}^0(g_\sigma^x)=g_{\tau_0}^{y_0}+\cdots + g^{y_m}_{\tau_m}$, where each $g_{\tau_i}^{y_i}$ is a generator of $\tau_i$ chosen such that its grade satisfies $y_i\leq x$; such a $y_i$ always exists because the faces $\tau_i$ of a simplex $\sigma$ are present in the bifiltration whenever $\sigma$ itself is present.
The map $f_{m+1}^0$ thus makes the square in \eqref{eq:csv_main_diag} commute and is therefore called a \define{lift} of $\partial_{m+1}$.
See Figure \ref{fig:f0} for an illustration.
\begin{figure}
\centering
  \includegraphics[scale=0.45]{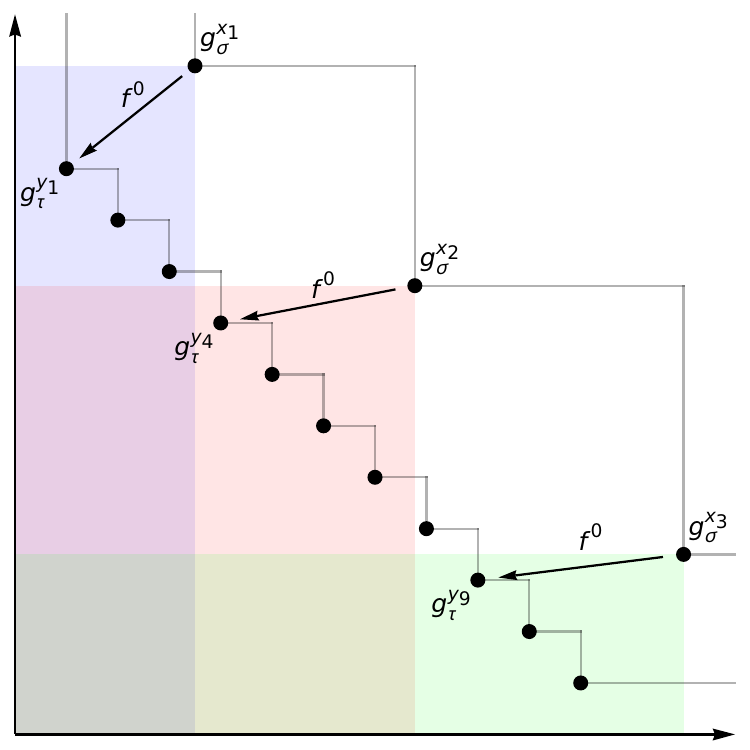}
  \caption{$f_{m+1}^0$ sends each generator $g_\sigma^x$ to a generator $g_\tau^y$ of the facet $\tau$ of $\sigma$ whose grade $y$ is in the downset of $x$ (say, with the smallest first coordinate).}  
  \label{fig:f0}
\end{figure}

Note that the kernel of $\gamma$ may not be isomorphic to the kernel of $\partial_m$; the surjection $\alpha_m$ maps each generator $g_\sigma^x$ of an $m$-simplex in $G_m$ to the same potential cycle in $C_m$, thus increasing the dimension of the kernel.
This can be resolved by relating these generators appropriately. Assume that the generators $g^{x_1}_\sigma,\ldots,g^{x_k}_\sigma$ of $U_\sigma$ are ordered w.r.t.\ the first coordinate of their grades. To represent $U_\sigma$ correctly, two consecutive generators $g^{x_{i}}_\sigma$ and $g^{x_{i+1}}_\sigma$ have to be identified at the join $y_i=x_i\vee x_{i+1}$ of their grades by a \define{relation} $r_\sigma^{y_i}$. With the free bipersistence module $R_\sigma\coloneqq 
\bigoplus_{y_i} U_{\{y_i\}}
$ with basis $\{r_\sigma^{y_i}\}_i$, we can present $U_\sigma$ by generators and relations via
\begin{equation}
\label{diag:free_res}
\begin{tikzcd}
0 & \arrow[l] U_\sigma & G_\sigma \arrow[l,swap,"\alpha_\sigma"] & R_\sigma , \arrow[l,swap,"p^1_\sigma"] 
\end{tikzcd}
\end{equation}
where $p^1_\sigma(r_\sigma^{y_i})=g_\sigma^{x_i}+g_\sigma^{x_{i+1}}$.
Since $p^1_\sigma$ is injective, $\alpha_\sigma$ is surjective, and $\ker \alpha_\sigma=\im p^1_\sigma$, the sequence in \eqref{diag:free_res} is a short exact sequence.
This means that $(G_\sigma \xleftarrow{p^1_\sigma}R_\sigma,\alpha_\sigma)$ already determines a \define{free resolution} of $U_\sigma$, as defined in Section \ref{sec:Background}, where $U_\sigma$ is considered as a chain complex concentrated in degree $0$. 

The free resolution in Diagram \ref{diag:free_res} has a simple combinatorial structure, given by a path graph $\mathcal{P}_\sigma$ with $k$ vertices corresponding to the generators $g_\sigma^{x_i}$ and $k-1$ edges $\{g_\sigma^{x_i},g_\sigma^{x_{i+1}}\}$ corresponding to the relations $r_\sigma^{y_i}$.
Note that vertices and edges implicitly carry the grades of the generators and relations.
The morphism $p^1_\sigma$ is represented by the (graded) incidence matrix of the graph.
We call this free resolution the \define{path resolution} of $U_\sigma$, see Figure \ref{fig:path_resolution}.
Note that a path resolution is a special case of a \define{cellular resolution} \cite{bayer_sturmfels_cellular,miller_sturmfels_book}.

\begin{figure}
\centering
  \includegraphics[width=8cm]{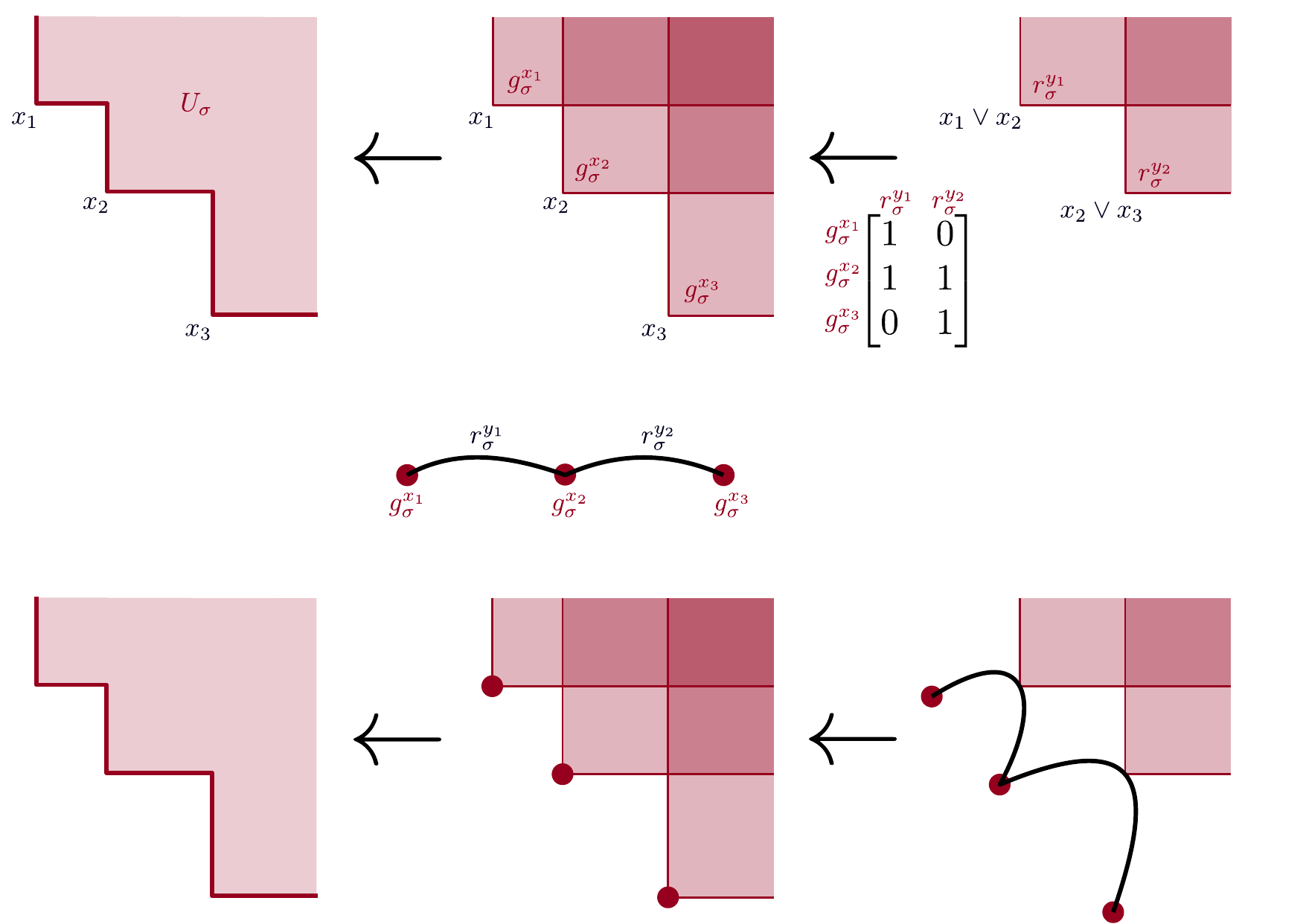}
  \caption{The top row illustrates the free resolution of $U_\sigma$ as in Diagram \ref{diag:free_res} and the middle row shows its path resolution. We often visualize both at once as in the bottom row. }  
  \label{fig:path_resolution}
\end{figure}

We now use the free resolution to replace the map $ \partial_{m+1}\circ \alpha_{m+1}$ in \eqref{eq:segment-step2}, assembling the following output complex from \eqref{eq:csv_main_diag}

\begin{equation} \label{diag:CSV_segment}
\begin{tikzcd}[column sep=large,row sep=large,ampersand replacement=\&,every label/.append style = {font = \small}]
D_{m-1} \& G_m \arrow[l,swap,"\gamma_m"] \&[25pt] G_{m+1}\oplus R_m \arrow{l}[swap]{\begin{pmatrix}f^0_{m+1} & p^1_m\end{pmatrix}}.
\end{tikzcd}
\end{equation}
By construction, the homology of this complex segment in degree $m$ is isomorphic to that of \eqref{eq:segment-step2}, and hence to that of the input complex.

\section{Path algorithm}
\label{sec:path}

\subparagraph{Description.} We now turn to the problem of finding a \define{free resolution}, that is, a chain complex of free modules that is quasi-isomorphic to the input $C_\bullet$, thus preserving homology in \textit{all} dimensions.
For this problem, extending the codomain of $\partial_i:C_i \rightarrow C_{i-1}$ to a free module generated in degree $0$, as done in the Chacholski--Scolamiero--Vaccarino algorithm, is not feasible anymore, since this changes $H_{i-1}(C_\bullet)$. Instead, we now carry out the step of 
substituting $C_i$ by its free resolution $G_i \xleftarrow{p^1_i} R_i \leftarrow 0 \leftarrow \cdots $ 
 in every dimension $i$ and
replacing $\partial_{i+1}$ by the (zeroth) lifts $f^0_{i+1}\colon G_{i+1}\rightarrow G_{i}$, 
yielding maps $\begin{pmatrix}f^0_{i+1} & p^1_i\end{pmatrix}$
as illustrated in \eqref{diag:path_step1}.

\begin{equation}
\label{diag:path_step1}
\adjustbox{max width=0.96\textwidth}{
\begin{tikzcd}[ampersand replacement=\&]
\& \vdots \arrow[d,swap,"\partial_3"] \& \vdots \arrow[d,swap,"f_3^0"] \&\& \&\& \\
0 \& C_2 \arrow[l] \arrow[d,swap,"\partial_2"] \& G_2 \arrow[d,swap,"f_2^0"] \arrow[l,swap,"\alpha_2"] \& R_2 \arrow[l,swap,"p^1_2"] \& 0 \arrow[l] \\
0 \& C_1 \arrow[l] \arrow[d,swap,"\partial_1"] \& G_1 \arrow[d,swap,"f_1^0"] \arrow[l,swap,"\alpha_1"] \& R_1 \arrow[l,swap,"p^1_1"] \& 0 \arrow[l] \arrow[dddrr,bend left,violet,shorten=40pt] \\
0 \& C_0 \arrow[l] \arrow[d] \& G_0 \arrow[d] \arrow[l,swap,"\alpha_0"] \& R_0 \arrow[l,swap,"p^1_0"] \& 0 \arrow[l] \\
\& 0 \& 0 \\[10pt]
\& \& \& 0 \& G_0 \arrow[l] \& G_1\oplus R_0 \arrow{l}[swap]{\begin{pmatrix}f_1^0 & p^1_0 \end{pmatrix}} \& G_2\oplus R_1 \arrow{l}[swap]{\begin{pmatrix}f_2^0 & p^1_1 \\ 0 & 0\end{pmatrix}} \& \cdots \arrow{l}[swap]{\begin{pmatrix}f_3^0 & p^1_2 \\ 0 & 0\end{pmatrix}} \& \phantom{a}
\end{tikzcd}}
\end{equation}

However, the sequence of maps, arising from this construction, does not yield a chain complex.
Given a relation $r_\sigma^{y_j}$ in $R_i$, we obtain 
\begin{equation*}
f^0_{i}\circ p^1_i(r_\sigma^{y_j})=f^0_{i}(g^{x_j}_\sigma)+f^0_{i}(g^{x_{j+1}}_\sigma)=g_{\tau_0}^{z_1}+\cdots+g_{\tau_i}^{z_i}+g_{\tau_0}^{z'_1}+\cdots+g_{\tau_i}^{z'_i}.
\end{equation*}
This term (also the term $f_{i-1}^0\circ f_i^0$) may not vanish, since the generators of $\sigma$ may be sent to different generators of its boundary simplices $\tau_j$ by $f^0_{i}$, i.e., $g_{\tau_j}^{z_j}\neq g_{\tau_j}^{z'_j}$.
However, it vanishes modulo relations, in the sense that its representative vanishes in $G_{i-1}/\im p^1_{i-1}$, see Lemma \ref{lem:path-lemma}.

\begin{lemma}[Path Lemma]
  \label{lem:path-lemma}
  For any $g_\tau^{x_j}$, $g_\tau^{x_\ell}$ in $G_{i-1}$, there is a unique set of distinct relations $r_\tau^{w_j},\ldots,r^{w_{\ell-1}}_\tau$ in $R_{i-1}$ such that $p^1_{i-1}(r_\tau^{w_j}+\cdots+r^{w_{\ell-1}}_\tau)=g_\tau^{x_j}+g_\tau^{x_\ell}$.
\end{lemma}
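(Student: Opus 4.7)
The plan is a two-step approach: first establish existence by writing down an explicit candidate, then deduce uniqueness from the injectivity of $p^1_\tau$ noted after Diagram~\ref{diag:free_res}. I will assume without loss of generality that $j < \ell$; the case $j = \ell$ is trivial, witnessed by the empty set of relations.

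For existence, the natural candidate is the sum $r_\tau^{w_j} + r_\tau^{w_{j+1}} + \cdots + r_\tau^{w_{\ell-1}}$, where $w_m = x_m \vee x_{m+1}$ is the grade of the relation that identifies the consecutive generators $g_\tau^{x_m}$ and $g_\tau^{x_{m+1}}$ in the path resolution of $U_\tau$. Applying $p^1_{i-1}$, which acts on this summand exactly as $p^1_\tau$, expands the sum via $p^1_\tau(r_\tau^{w_m}) = g_\tau^{x_m} + g_\tau^{x_{m+1}}$. Over $\mathbb{Z}_2$, a routine telescoping cancels every intermediate generator $g_\tau^{x_m}$ for $j < m < \ell$ (each appears exactly twice) and leaves precisely $g_\tau^{x_j} + g_\tau^{x_\ell}$.

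For uniqueness, I would use the direct sum decomposition $R_{i-1} = \bigoplus_\tau R_\tau$ together with the corresponding decomposition of $G_{i-1}$ to observe that $p^1_{i-1}$ is block-diagonal: relations of one simplex map only into generators of the same simplex. Hence any preimage of the element $g_\tau^{x_j} + g_\tau^{x_\ell}$, which is supported on a single block, must itself be supported entirely on the summand $R_\tau$. Inside $R_\tau$, injectivity of $p^1_\tau$ (its graded matrix is, on the nonzero rows, the incidence matrix of the path graph $\mathcal{P}_\tau$, which is a tree) forces uniqueness of the preimage as an element of $R_\tau$. Since we are working over $\mathbb{Z}_2$, every element of this free module is a unique sum of distinct basis relations, so the asserted set of relations is uniquely determined.

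The main obstacle I anticipate is the uniqueness step, and specifically the need to localize the preimage to the summand $R_\tau$; one must check that the block-diagonal structure of $p^1_{i-1}$ rules out contributions from relations attached to other simplices. Once this reduction is in place, the argument is a short linear-algebra observation about incidence matrices of trees, and the telescoping for existence is essentially a one-line computation.
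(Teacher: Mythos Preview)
Your proof is correct and follows essentially the same idea as the paper's: the paper simply observes that the restriction of $p^1_{i-1}$ to $R_\tau \to G_\tau$ is the boundary map of the path graph $\mathcal{P}_\tau$, so the required relations are the edges of the unique path between $g_\tau^{x_j}$ and $g_\tau^{x_\ell}$. Your telescoping for existence and injectivity-of-$p^1_\tau$ argument for uniqueness unpack exactly this combinatorial fact algebraically; the block-diagonal reduction you worry about is already implicit in the paper's restriction to the single simplex $\tau$.
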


\begin{proof}
    This can be seen combinatorially.
The free resolution of $U_\tau$ corresponds to the path graph $\mathcal{P}_\tau$, in the sense that the restriction of $p^1_{i-1}$ to a map $R_\tau \to G_\tau$ corresponds to the boundary map of the path graph.
The relations $r_{\tau}^{w_j},\ldots,r_{\tau}^{w_{\ell-1}}$ then define the unique path connecting the vertices $g_\tau^{x_j}$ and $g_\tau^{x_\ell}$ in $\mathcal{P}_{\tau}$.
\end{proof}

\begin{figure}
\centering
  \includegraphics[scale=0.45]{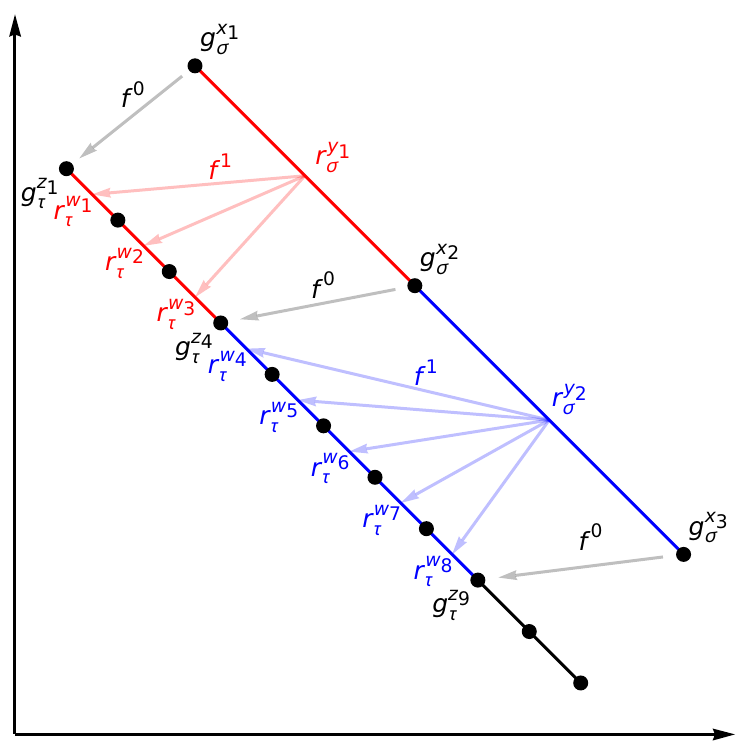}
  \caption{$f_i^1$ sends each edge $r_\sigma^y$ to the path connecting the images of its endpoints under $f_i^0$.}  
  \label{fig:f1}
\end{figure}

The map $p^1_i$ sends the edge $r_\sigma^{y_j}$ of $\mathcal{P}_\sigma$ to the vertices $g_\sigma^{x_j}$ and $g_\sigma^{x_{j+1}}$ which in turn are mapped to $g_{\tau_j}^{z_j}$ and $g_{\tau_j}^{z'_j}$ on $\mathcal{P}_{\tau_j}$ by $f^0_i$. We can therefore modify the boundary map in \eqref{diag:path_step1} by additionally sending $r_\sigma^{y_j}$ to the path $r_{\tau_j}^{w_1}+\cdots+r_{\tau_j}^{w_\ell}$ guaranteed by Lemma \ref{lem:path-lemma}, see Figure \ref{fig:f1}. This defines a correction term $f^1_i:R_i\rightarrow R_{i-1}$ such that $p^1_{i-1}\circ f^1_i = f^0_i\circ p^1_i$. The maps $f^1_i$ are called the \define{first lifts} of $\partial_i$.

Even after adding the correction term $f^1_i$ to \eqref{diag:path_step1}, the resulting maps may still fail to be boundary maps, since the composition $f^0_{i-1}\circ f^0_i(g_\sigma^x)$ need not vanish for each generator $g_\sigma^x$ in $G_i$, as seen in Figure \ref{fig:example_diagram}.
Again, $f^0_{i-1}\circ f^0_i(g_\sigma^x)$ is a sum consisting of pairs $g^{y}_\tau,g^{y'}_\tau$ of possibly different generators  of the same simplex $\tau$,  which can be connected by a path of relations as above.
A second correction term $h^0_i\colon G_i\rightarrow R_{i-2}$ is defined analogously to the lifts $f^1_i$.
By construction, $h^0_i$ satisfies $p^1_{i-2}\circ h^0_i = f^0_{i-1}\circ f^0_i$, making the following diagram commute:
\vspace{-25pt}
\begin{equation}
\label{diag:path_step3}
\adjustbox{max width=0.9\textwidth}{
\begin{tikzcd}[ampersand replacement=\&]
\& \& \phantom{a} \arrow[ddr,shorten <=30pt,"h^0_3"{xshift=7pt,yshift=-10pt},red] \\
\& \vdots \arrow[d,swap,"\partial_3"] 
  \& \vdots  \arrow[d,swap,"f_3^0"] 
  \& \vdots \arrow[d,"f_3^1",blue] \\
0 \& C_2 \arrow[l] \arrow[d,swap,"\partial_2"] 
  \& G_2 \arrow[d,swap,"f_2^0"] \arrow[l,swap,"\alpha"] \arrow[ddr,"h^0_2"{xshift=-10pt,yshift=10pt},red]
  \& R_2 \arrow[l,swap,"p^1_2"] \arrow[d,"f_2^1",blue] \& 0 \arrow[l] \\
0 \& C_1 \arrow[l] \arrow[d,swap,"\partial_1"] 
  \& G_1 \arrow[d,swap,"f_1^0"] \arrow[l,swap,"\alpha"] 
  \& R_1 \arrow[l,swap,crossing over,"p^1_1"{xshift=3pt}] \arrow[d,"f_1^1",blue] \& 0 \arrow[l] \arrow[dddrr,bend left,violet,shorten=40pt]  \\
0 \& C_0 \arrow[l] \arrow[d]
  \& G_0 \arrow[l,swap,"\alpha"] \arrow[d]
  \& R_0 \arrow[d] \arrow[l,swap,"p^1_0"] \& 0 \arrow[l] \\
  \& 0 \& 0 \& 0 \\
  \&\&\& 0 \& G_0  \arrow[l]
  \& G_1\oplus R_0 \arrow{l}[swap]{\begin{pmatrix}f_1^0 & p^1_0 \end{pmatrix}} 
  \& G_2\oplus R_1 \arrow{l}[swap]{\begin{pmatrix}f_2^0 & p^1_1 \\ h^0_2 & f^1_1\end{pmatrix}} 
  \& \cdots \arrow{l}[swap]{\begin{pmatrix}f_3^0 & p^1_2 \\ h^0_3 & f^1_2\end{pmatrix}} 
\end{tikzcd}}
\end{equation}
Adding $h^0_i$ and $f^1_i$ to the boundary maps in \eqref{diag:path_step1} yields the desired free resolution of $C_\bullet$. The following Theorem, proven in Appendix \ref{app:path_correctness}, concludes the correctness of the path algorithm.
\begin{theorem}
\label{thm:path_correctness}
The chain complex on the right side of \eqref{diag:path_step3} is a free resolution of $C_\bullet$.  
\end{theorem}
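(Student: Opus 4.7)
The plan is to verify three properties in turn: (a) the sequence on the right of \eqref{diag:path_step3} is a chain complex, (b) it admits a natural chain map to $C_\bullet$, and (c) this chain map is a quasi-isomorphism.

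For (a), I expand the composition of two consecutive boundary matrices
$$\begin{pmatrix} f^0_i & p^1_{i-1} \\ h^0_i & f^1_{i-1} \end{pmatrix}\begin{pmatrix} f^0_{i+1} & p^1_i \\ h^0_{i+1} & f^1_i \end{pmatrix}$$
into four blocks. The two upper blocks $f^0_i f^0_{i+1} + p^1_{i-1} h^0_{i+1}$ and $f^0_i p^1_i + p^1_{i-1} f^1_i$ vanish by the very defining conditions of $h^0_{i+1}$ and $f^1_i$ stated in the main text. For the lower blocks $h^0_i f^0_{i+1} + f^1_{i-1} h^0_{i+1}$ and $h^0_i p^1_i + f^1_{i-1} f^1_i$, I exploit the injectivity of $p^1_{i-2}$: on each simplex it is the edge-to-vertex incidence of a path graph (as used in the proof of the Path Lemma), and injectivity persists under direct sums. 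Postcomposing each lower block with $p^1_{i-2}$ and substituting the defining identities $p^1_{i-2} h^0_\bullet = f^0_{i-1} f^0_\bullet$ and $p^1_{i-2} f^1_{i-1} = f^0_{i-1} p^1_{i-1}$ rewrites it as $f^0_{i-1}$ applied to the corresponding (already vanishing) upper block, and injectivity of $p^1_{i-2}$ then yields the identity.

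For (b), the projection $\Psi_i \colon G_i \oplus R_{i-1} \to C_i$, $(g,r) \mapsto \alpha_i(g)$, is a chain map: $\alpha_{i-1} \circ p^1_{i-1} = 0$ because the image of $p^1_{i-1}$ lies in $\ker \alpha_{i-1}$, and $\alpha_{i-1} \circ f^0_i = \partial_i \circ \alpha_i$ by the zeroth-lift property.

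For (c), I view \eqref{diag:path_step3} as a (finite, bounded) bicomplex whose horizontal rows $G_i \xleftarrow{p^1_i} R_i \leftarrow 0$ are free resolutions of $C_i$ via the surjections $\alpha_i$, while the vertical direction encodes $\partial_\bullet$ via the lifts $f^0$, with $f^1$ and $h^0$ serving as perturbations forcing the total differential to square to zero. The bottom-right sequence of \eqref{diag:path_step3} is precisely the resulting total complex, and $\Psi$ is the natural augmentation to $C_\bullet$. Filtering by resolution degree, the associated spectral sequence has $E^1$ page equal to $C_\bullet$ concentrated in resolution degree zero (the rows being exact beyond $C_i$), hence $E^2 = H_\bullet(C_\bullet)$, and the sequence collapses. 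Therefore $\Psi$ induces isomorphisms on homology.

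The main obstacle is (c): once the chain complex property and the row-wise exactness are in hand, the quasi-isomorphism follows from the standard Cartan-Eilenberg machinery for bounded bicomplexes, but one must check that the perturbation $h^0$ does not disturb convergence. All the combinatorial substance is provided by the Path Lemma, which simultaneously furnishes the lifts $f^1$ and $h^0$ and guarantees the injectivity of $p^1$ exploited in step (a).
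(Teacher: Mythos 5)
Parts (a) and (b) of your argument are essentially the paper's: the upper blocks of the composed boundary matrix vanish by the defining identities of $f^1_i$ and $h^0_i$, and the lower blocks vanish after postcomposition with the monomorphism $p^1_{i-2}$ reduces them to $f^0_{i-1}$ applied to the already-vanishing upper blocks; and the chain map $(\alpha_i,0)$ is the same one the paper uses.

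For (c), you take a genuinely different route. The paper shows the mapping cone of $\alpha$ is acyclic by an explicit element chase (lift $x$ through the epimorphism $\alpha_{i+1}$, use exactness of the row to produce the $R$-component, use injectivity of $p^1_{i-1}$ to pin down the last coordinate), while you argue via the spectral sequence of a filtered complex. The structural idea is sound, but there is a gap at precisely the point you flagged. ``Filtering by resolution degree'' does not give a filtered complex: $h^0_i\colon G_i\to R_{i-2}$ raises resolution degree from $0$ to $1$, so $F_j=\{\text{resolution degree}\leq j\}$ is not preserved by the total differential, and the decreasing variant fails too because $p^1$ lowers resolution degree. The filtration that does work is by the subscript $i$ of $G_i$ and $R_i$: in $T_n=G_n\oplus R_{n-1}$ set $F_jT_n=0$ for $j<n-1$, $F_{n-1}T_n=R_{n-1}$, and $F_jT_n=T_n$ for $j\geq n$. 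Every component of the total differential is compatible — $f^0$ and $f^1$ lower the subscript by one, $p^1$ preserves it, and $h^0$ lowers it by two — so the associated graded complexes are the exact sequences $0\leftarrow G_j\xleftarrow{p^1_j}R_j\leftarrow 0$, giving $E^1_{j,0}=C_j$, $E^1_{j,1}=0$, $d^1=\partial_j$ by the commuting squares, and the perturbation $h^0$ contributes only to $d^{\geq 2}$, which vanishes since $E^2$ occupies a single complementary degree. That one-line observation (that $h^0$ strictly drops the filtration degree by two) is exactly the ``perturbation does not disturb convergence'' check you said is needed; your stated filtration would not pass it. Relatedly, the $(G,R)$ grid with $p^1$ and $f^\bullet$ is not a bicomplex, since the vertical composites $f^0_{i-1}\circ f^0_i$ and $f^1_{i-1}\circ f^1_i$ are nonzero — that is why $h^0$ exists — so the Cartan--Eilenberg spectral sequences of a bicomplex do not apply; you need the spectral sequence of a filtered complex, which the row-index filtration supplies.
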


\begin{figure}
\centering
  \includegraphics[width=\textwidth]{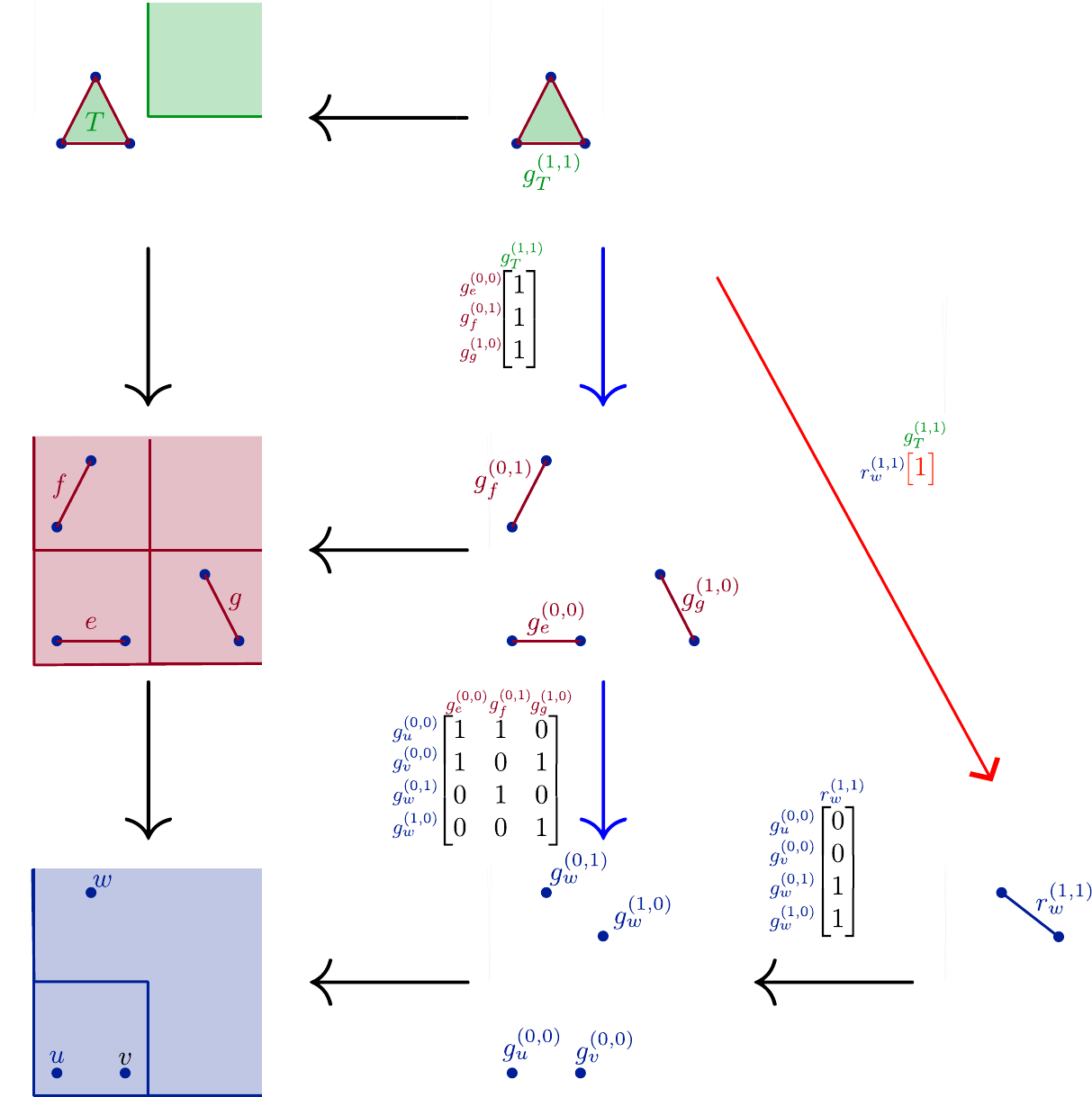}

  \vspace{1cm}

    \includegraphics[width=\textwidth]{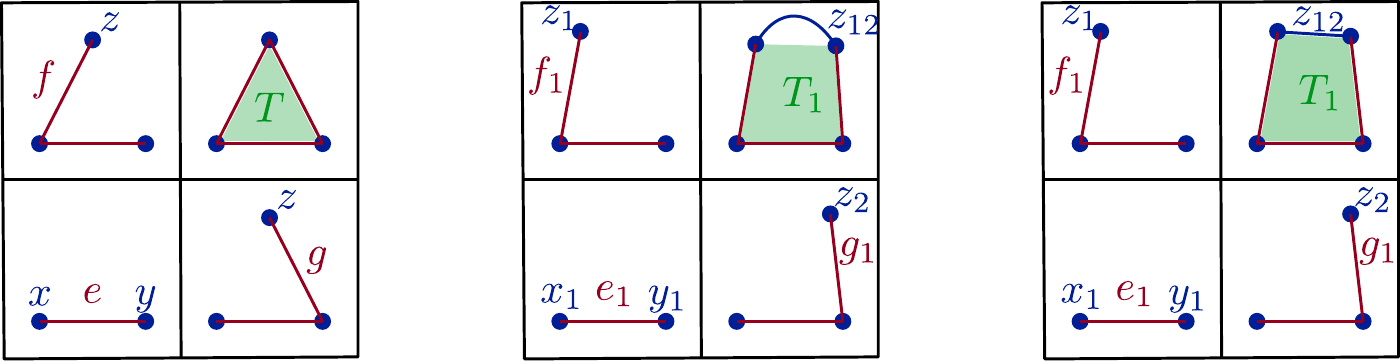}

  \caption{Construction of Diagram \ref{diag:path_step3} for the bifiltration on the bottom left. The boundary defect $f^1_{0}\circ f^1_1\neq 0$, can be repaired by adding the red correction morphism which corresponds to a change of the boundary of $T_1$ in the bottom-middle cell complex. This yields the bottom-right output.}  
  \label{fig:example_diagram}
\end{figure}

\subparagraph{Path algorithm.} For the description of the algorithm and its asymptotic time complexity, we assume that the input $C_\bullet$ is induced by a $k$-critical bifiltration of a simplicial complex with fixed dimension $d$, given as a list of simplices $\sigma$, each with its dimension, grades $\mathcal{G}(\sigma)$ and boundary $\partial\sigma$. The output is described by a list of generators and relations (each carrying a grade) as well as their images under the maps $p^1_i,f^0_i,f^1_i$ and $h^0_i$ (which together form the boundary maps of the free resolution). The output is initialized as an empty list. 
\begin{enumerate} 
\item Compute the bases of $G_i \oplus R_{i-1}$ and the matrices $[p_i^1]$.  For each $i$-simplex $\sigma$ and $x_j\in \mathcal{G}(\sigma)$, add a generator $g_\sigma^{x_j}$ to $G_i$. For any two consecutive $x_j,x_{j+1}\in \mathcal{G}(\sigma)$, add a relation $r_\sigma^{y_j}$ with $y_j=x_j\vee x_{j+1}$ to $R_{i}$. Moreover, define $p_{i}^1(r_\sigma^{y_j})\coloneqq g_\sigma^{x_j}+g_\sigma^{x_{j+1}}$. 
\item Compute the matrices $[f_i^0]$. For each generator $g_\sigma^x$ in $G_i$, find generators $g_{\tau_0}^{x_0},\ldots, g_{\tau_i}^{x_i}$ for $\tau_0+\cdots+\tau_i=\partial\sigma$ such that $x_j\leq x$ and define $f^0_i(g_\sigma^x)\coloneqq g_{\tau_0}^{x_0}+\cdots+g_{\tau_i}^{x_i}$. 
\item Compute $[f_i^1]$ and $[h^0_i]$. For $[f_i^1]$,  perform a matrix multiplication $[f_i^0][p_i^1]$. The resulting columns are indexed by the relations $r_\sigma^y$ in $R_i$ and consist of one pair of generators $g_\tau^{z_j},g_\tau^{z_\ell}$ for each facet $\tau$ of $\sigma$. 
Such a pair is connected by a path $c_\tau:=r_\tau^{w_j}+\cdots + r_\tau^{w_\ell}$ in $\mathcal{P}_\tau$, and we set $f^1_i(r_\sigma^y):=\sum_{\tau} c_\tau$; see Figure \ref{fig:first_lifts_new}.
For $[h^0_i]$, proceed analogously to multiply $[f^0_{i-1}][f^0_i]$. 
The resulting columns are indexed by the generators $g_\sigma^x$ in $G_i$ and consist of one pair of generators $g_\tau^{z_j},g_\tau^{z_\ell}$ in $G_{i-2}$ for each codimension $2$ face $\tau$ of $\sigma$. 
Such a pair is connected by a path $c_\tau:=r_\tau^{w_j}+\cdots + r_\tau^{w_\ell}$ in $\mathcal{P}_\tau$, and we set $h^0_i(g_\sigma^x):=\sum_{\tau} c_\tau$.
\end{enumerate}
\begin{figure}
  \centering
  \includegraphics[width=14cm]{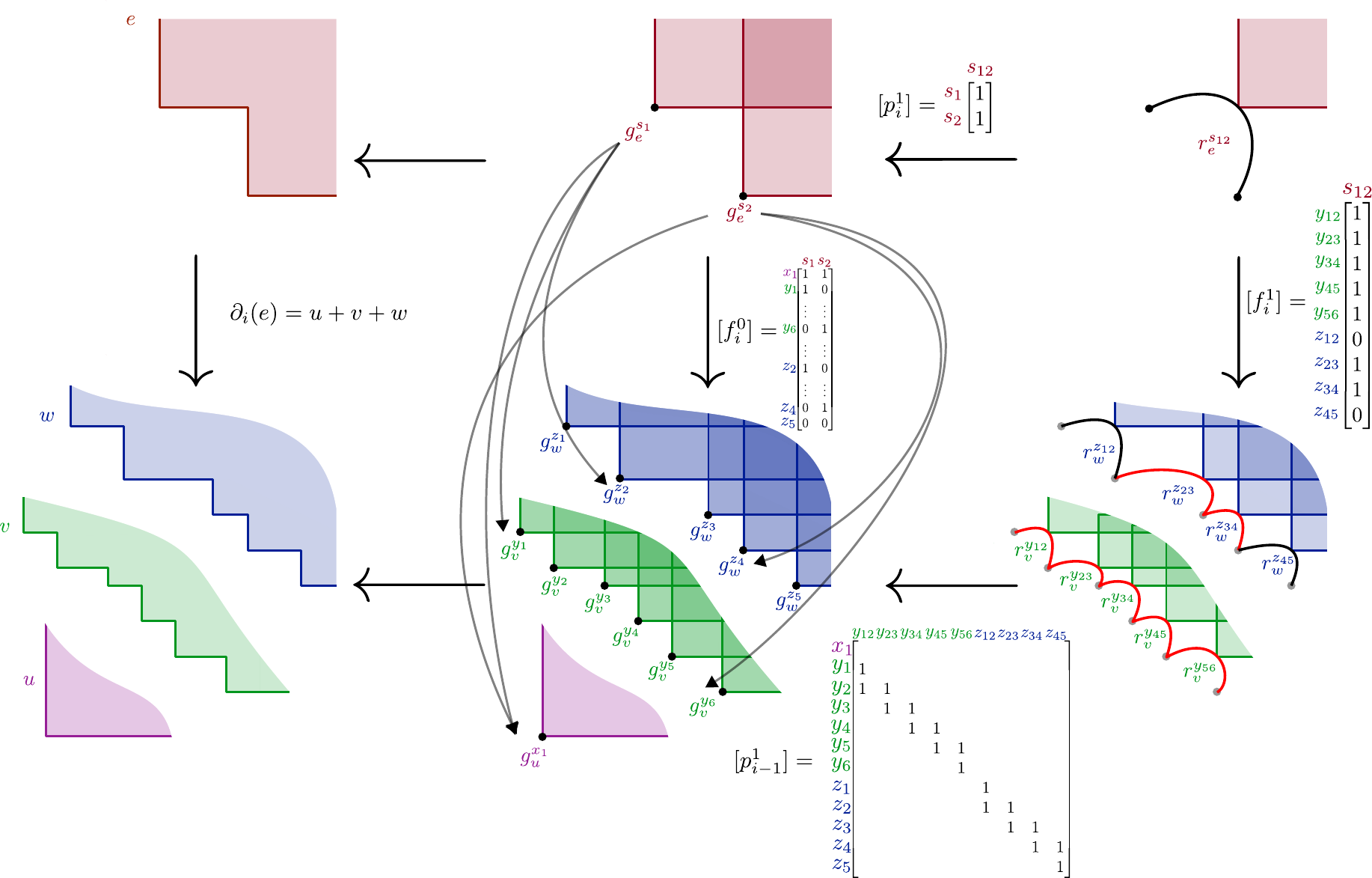}
  \caption{Construction of $f^1_i$. The relation $r_e^{s_{12}}$ is mapped to the paths $r_w^{z_{23}}+r_w^{z_{34}}$ and $r_v^{y_{12}}
  +\cdots+ r_v^{y_{56}}$ in the path resolutions of $w$ and $v$, highlighted in red.
  }
   \label{fig:first_lifts_new}
\end{figure}

\subparagraph{Complexity.}

The running time of the path algorithm is $O(nk)$ for a $k$-critical bifiltration of description size $n$; see Appendix~\ref{app:path_complexity}. 
This runtime is 
worst-case optimal, since the description size of a computed free resolution can be $\Omega(nk)$ in the worst-case.
The reason is that the paths in the computation of $f^1_i$ and $h^0_i$ can be of size up to $k$, and many simplices may require such long paths in the worst case.
Figure~\ref{fig:wheels} gives a construction for which the matrix $[h^0_2]$ is of size $\Omega(nk)$; see Figure~\ref{fig:star} in the Appendix
for a similar construction for $[f^1_1]$.

A free chain complex is \emph{minimal} if there is no quasi-isomorphic
free chain complex with a smaller number of generators. In general,
free resolutions computed by the path algorithm are not minimal.
However, a slight modification of the construction in Figure~\ref{fig:wheels}
yields the following result (see Appendix~\ref{app:wheel_example} for details).

\begin{proposition}
\label{prop:path_example}
  There is a $k$-critical simplicial bifiltration of description size $n$
  for which the path algorithm computes a free resolution represented by matrices of description size $\Omega(nk)$.
  This free resolution is minimal, and no other choice of basis yields a smaller description size.
\end{proposition}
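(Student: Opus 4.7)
The plan is to exhibit a refined version of the wheels construction from Figure~\ref{fig:wheels} and then argue in three stages: (i) the path algorithm produces matrices of description size $\Omega(nk)$; (ii) the resulting free resolution is minimal; (iii) no basis change reduces the number of nonzero entries. The first of these is essentially the original wheels lower bound, so the real work lies in (ii) and (iii).

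First I would describe the modification. The original wheels construction produces $\Omega(nk)$ nonzero entries in $[h^0_2]$ because each of $\Theta(n/k)$ columns holds a path of length $\Theta(k)$. To exclude coincidental collapses under a base change, I would perturb the grades of the simplices so that all generators of the free resolution sit at pairwise distinct grades in $\mathbb{N}^2$, the grades of distinct relations are also distinct, and any two paths in the path graphs $\mathcal{P}_\tau$ producing entries of $[h^0_2]$ share no relation at the same grade. This keeps the bifiltration $k$-critical and of description size $\Theta(n)$, and a direct recount gives the $\Omega(nk)$ bound for the path algorithm's output on the modified input.

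Second, I would establish minimality. A standard criterion for graded free resolutions over a polynomial ring states that a free resolution is minimal precisely when the entries of its boundary matrices lie in the graded maximal ideal. Translated into the language of bipersistence modules, this means that no nonzero matrix entry may relate a domain generator and a codomain generator sitting at the same grade. I would check this condition explicitly for each of the blocks $[p^1_i]$, $[f^0_i]$, $[f^1_i]$, and $[h^0_i]$ produced by the path algorithm: in every case, the target generator or relation sits at a grade strictly above that of the source, owing to the definition of the joins $y_j = x_j \vee x_{j+1}$. By uniqueness of minimal free resolutions up to isomorphism, this already shows that no quasi-isomorphic chain complex can have a smaller total number of generators.

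The hardest step is (iii): showing that no change of basis brings the description size below $\Omega(nk)$. Any base change preserving minimality must act within each graded component of each $F_i$, i.e., as an invertible linear transformation on the subspace spanned by generators at a fixed grade. For any pair of grades $x \leq y$, the portion of a boundary matrix mapping generators at grade $x$ in the domain to generators at grade $y$ in the codomain is then a linear map whose rank is an invariant of the bipersistence module and whose nonzero entry count is bounded below by that rank. I would finish by showing that, in the perturbed wheels construction, each spoke contributes a nonzero entry in a graded block distinct from all others, so each block is essentially a rank-one (indeed nonzero scalar) map whose nonzero entry count already equals its rank. Summing these ranks over all occupied graded slots recovers the $\Omega(nk)$ lower bound, completing the argument. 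The main delicate point is guaranteeing that the perturbation is generic enough to rule out cancellations in these blocks; this is why the distinctness of grades among generators and relations is built into the construction from the start.
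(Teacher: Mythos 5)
Your overall plan mirrors the paper's argument exactly: modify the wheel construction so the path algorithm's output is a minimal free resolution, then argue that rigidity of the grading prevents any basis change from reducing the number of nonzero entries. Stages (i) and (ii) are fine, and the minimality criterion you invoke (no entry relating generators at the same grade) is precisely the one the paper uses.

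There is, however, a genuine gap in stage (iii). The claim that ``any base change preserving minimality must act within each graded component of each $F_i$'' is false. An automorphism $\varphi$ of a graded free module sends a basis element $b^{x}$ to any linear combination $\sum_{x'\le x} c_{x'}\, b^{x'}$; preserving minimality only forces the same-grade coefficient to be nonzero, while contributions from \emph{strictly smaller} grades are completely unconstrained. Such off-diagonal terms can create cancellations in boundary matrices. Concretely, if a column of $[\partial]$ contains entries in two codomain rows at comparable grades $x'< x$, replacing the basis element at $x$ by its sum with the lift of the element at $x'$ can eliminate one of those entries. Your requirement that grades be pairwise \emph{distinct} does not exclude comparable pairs, so the subsequent rank-per-block argument, which tacitly assumes the base change is block-diagonal by grade, does not go through. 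The condition that does work, and the one the paper uses, is pairwise \emph{incomparability}: if all relevant basis elements of a given $F_i$ sit at pairwise incomparable grades, then the only automorphism of $F_i$ (over $\mathbb{Z}_2$) is the identity, so the boundary matrices are literally canonical and their nonzero count cannot be reduced. Replacing ``distinct'' by ``incomparable'' throughout the construction and dropping the rank-invariance detour in favour of this direct rigidity observation repairs the proof and brings it in line with the paper.
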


\begin{figure}
  \centering
  \includegraphics[width=13.5cm]{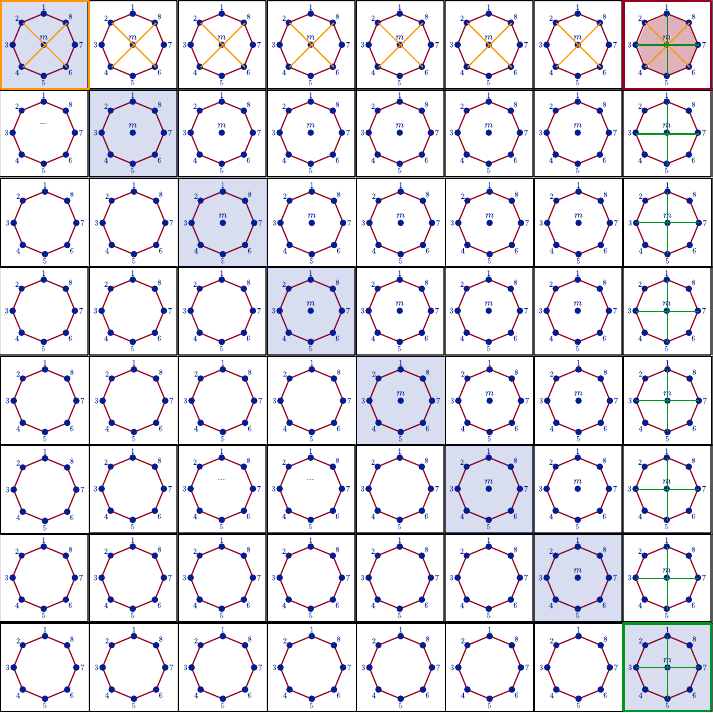}
  \caption{The \define{wheel} bifiltration consists of $\ell+1$ $0$-simplices, $\ell$ $1$-simplices and $\ell$ $2$-simplices, with $\ell=8$ in this case. The only $\ell$-critical $0$-simplex is $m$ entering the bifiltration along the blue staircase. This forces $f^0_1$ to map all generators of the orange (resp. green) $1$-simplices to a generator of $m$ with orange (resp.\@ green) grade. The composition $f^0_1 \circ f^0_2$ then maps each generator of a purple $2$-cell to the sum of a generator of $m$ with orange grade and one with a green grade. The orange and green generators of $m$ are connected by $\ell-1$ relations. This yields $\ell-1$ non-zero entries in each of the $\ell$ columns of $[h^0_i]$, making it dense.}
   \label{fig:wheels}
\end{figure}

\section{Log-path algorithm} 
\label{sec:tree}

\subparagraph{Shortcuts.} In the preceding section, we leveraged the fact that the upset modules induced by simplices of a bifiltration admit path-shaped resolutions, allowing us to compute the correction terms $f^1_i$ and $h_i^0$ in a simple way.
In the worst case, many generators and relations are mapped to long paths, making the matrices $[f^1_i]$ and $[h_i^0]$ dense ($\Omega(nk)$ size) and leading to a quadratic running time of $\Theta(nk)$ for the path algorithm. 

Such long paths can be avoided by introducing shortcuts in the path resolution $\mathcal{P}_\sigma$ \eqref{diag:free_res}, as illustrated in Figure \ref{fig:shortcut_path}.
Recall that the grades $\mathcal{G}(\sigma)=\{x_1,\cdots,x_{n_\sigma}\}$ generating $\mathrm{supp}(\sigma)$ are totally ordered (by their first coordinate).
Now any two vertices $g_\sigma^{x_j}$ and $g_\sigma^{x_\ell}$ with $j<\ell$ are connected by an additional shortcut edge if there are numbers $s \geq 0, t > 0$ with $j=s2^t$ and $\ell=(s+1)2^t$.
Extending $\mathcal{P}_\sigma$ by this edge corresponds to adding a relation $r^{z}_\sigma$ to $R_\sigma$, where $p_\sigma(r^z_\sigma)=g_\sigma^{x_j}+g_\sigma^{x_\ell}$ and $z=x_j\vee x_\ell$. In total, only $O(n_\sigma)$ many edges are added.
 
This construction ensures that any two vertices $g^{x_j}_\sigma$ and $g_\sigma^{x_\ell}$ with $j<\ell$ are connected by a monotone path of length logarithmic in $n_\sigma$.
A shortest monotone path can be constructed in a greedy way.
We start in $g_\sigma^{x_j}$ and take the longest possible edge in each step that does not overshoot $g_\sigma^{x_\ell}$.
We conclude the following extension of Lemma \ref{lem:path-lemma} (proved in Appendix \ref{app:shortest_paths}).

\begin{figure}
\centering
\includegraphics[width=14cm]{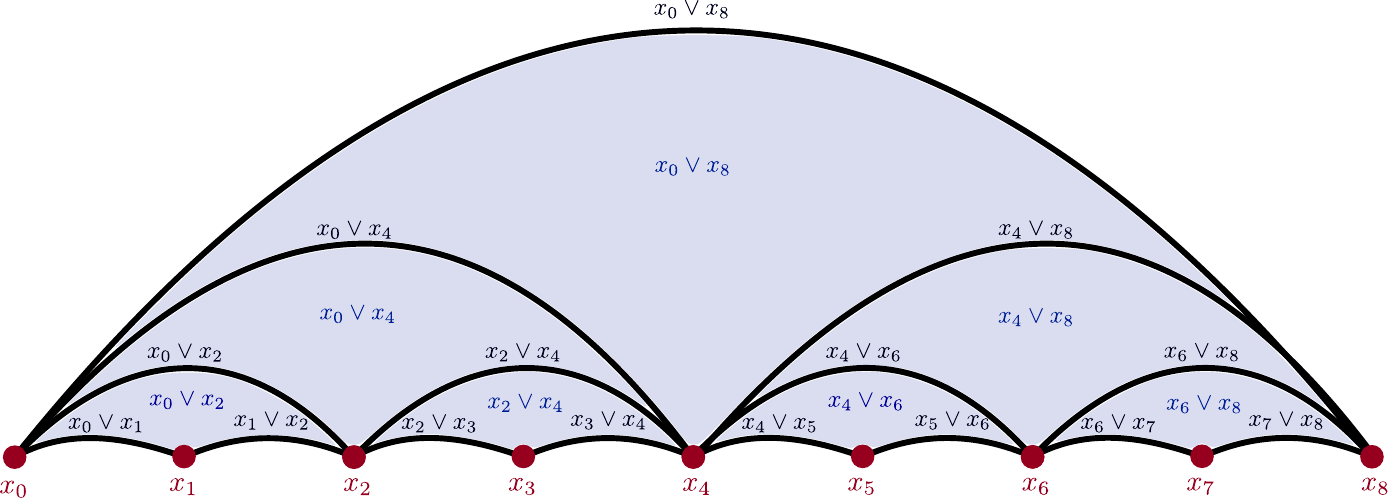}
\caption{Log-path resolution labeled by the grades of its generators, relations and syzygies. }
\label{fig:shortcut_path}
\end{figure}

\begin{figure}
\centering
\includegraphics[width=12cm]{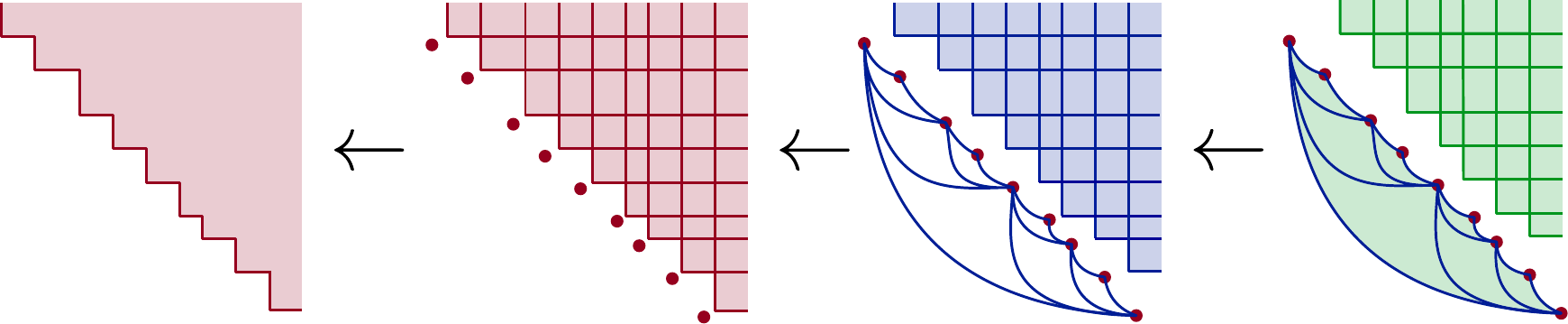}
\caption{The new resolution of length $2$ of an upset module $U_\sigma$.}
\label{fig:resolutions_of_length_two}
\end{figure}

\begin{restatable}[Log-path Lemma]{lemma}{logpathlemma}
  \label{lem:log-path-lemma}
  For any $g^{x_j}_\sigma$, $g_\sigma^{x_\ell}$ in $G_\sigma$, there exist $r_\sigma^{y_1},\ldots,r_\sigma^{y_m}$ in $R_\sigma$ with $m=O(\log n_\sigma)$
  such that $p_\sigma^1(r_\sigma^{y_1}+\cdots+r_\sigma^{y_m})=g^{x_j}_\sigma+g_\sigma^{x_\ell}$.
  Moreover, the elements $r_\sigma^{y_1},\ldots,r_\sigma^{y_m}$ can be computed in $O(\log n_\sigma)$ time.
\end{restatable}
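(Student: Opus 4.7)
The plan is to realize the elements $r_\sigma^{y_i}$ as the edges of a greedy monotone path from $g_\sigma^{x_j}$ to $g_\sigma^{x_\ell}$ in the log-path graph $\mathcal{P}_\sigma$, and then bound the length of this path. Assume without loss of generality that $j < \ell$. Starting from $p_0 := j$, at each step $i$ let $t_i$ be the largest non-negative integer such that $p_i$ is divisible by $2^{t_i}$ (so that an edge of length $2^{t_i}$ out of $g_\sigma^{x_{p_i}}$ exists in the log-path resolution) and $p_i + 2^{t_i} \leq \ell$ (so that we do not overshoot the target). Set $p_{i+1} := p_i + 2^{t_i}$ and let $r_\sigma^{y_i}$ be the relation corresponding to the edge $\{g_\sigma^{x_{p_i}}, g_\sigma^{x_{p_{i+1}}}\}$. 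Terminate when $p_m = \ell$.

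The identity $p_\sigma^1(r_\sigma^{y_1} + \cdots + r_\sigma^{y_m}) = g_\sigma^{x_j} + g_\sigma^{x_\ell}$ follows from a telescoping argument: each summand equals $g_\sigma^{x_{p_i}} + g_\sigma^{x_{p_{i+1}}}$, so every intermediate vertex appears in exactly two consecutive terms and cancels in characteristic $2$, leaving only the endpoints.

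The main technical point is the bound $m = O(\log n_\sigma)$, which I would establish by proving that the sequence $(t_i)$ is unimodal: strictly increasing during an initial ascending phase and strictly decreasing afterwards. In the ascending phase the greedy choice is constrained by the $2$-adic valuation of $p_i$ (rather than by $\ell$), and moving by $2^{t_i} = 2^{v_2(p_i)}$ produces a new position whose valuation is strictly larger, forcing $t_{i+1} > t_i$. The phase ends as soon as the greedy choice becomes constrained by $\ell$, i.e., as soon as $p_i + 2^{t_i+1} > \ell$; from that point on, the inequality $p_{i+1} + 2^{t_i} = p_i + 2^{t_i+1} > \ell$ propagates and forces $t_{i+1} < t_i$ in every subsequent step. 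Since each value of $t$ lies in $\{0, 1, \ldots, \lfloor \log_2 n_\sigma \rfloor\}$ and appears as some $t_i$ at most twice (once per phase), the total length is $m = O(\log n_\sigma)$.

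For the runtime claim, each greedy step reduces to computing the $2$-adic valuation of an integer together with one comparison against $\ell$, both of which cost $O(1)$ under standard word-RAM assumptions, so the entire construction takes $O(\log n_\sigma)$ time. The subtle part will be carefully justifying the "no flip-back" property at the transition between phases, which is what makes the unimodality argument go through without case explosion.
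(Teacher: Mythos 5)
Your proposal follows essentially the same route as the paper's Appendix~\ref{app:shortest_paths}: the greedy monotone path in the log-path graph, telescoping for the algebraic identity, and unimodality of the step-size sequence $(t_i)$ (strictly increasing while divisibility-constrained, strictly decreasing once the target $\ell$ becomes the binding constraint) for the length bound. The ``no flip-back'' point you flag is precisely what the paper's descending-phase argument handles, using that once the $\ell$-constraint binds, $v_2(p_{i+1})=t_i$ exactly, so the constraint re-binds at the next step. The only genuine difference is the runtime: you appeal to $O(1)$ word-RAM computation of the $2$-adic valuation, whereas the paper stays machine-model-free by amortizing the search for each step size over the whole path (scan $r$ upward until the first overshoot, thereafter only downward, giving $O(t)$ total scanning).
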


\subparagraph{Log-path resolutions.} 
Lemma~\ref{lem:log-path-lemma} appears  to resolve the size issue: the same algorithm as in the previous section can be used, except that in the construction of $h_i^0$ and $f_i^1$ use Lemma~\ref{lem:log-path-lemma}
instead of Lemma~\ref{lem:path-lemma}.
Then, the number of nonzero boundary coefficients for every generator and relation has only size $O(\log n_\sigma)$
instead of $O(n_\sigma)$, which would lead to an output boundary matrix with $O(n\log k)$ nonzero entries. 

Now recall that a key reason for why the path construction in \eqref{diag:path_step3} yields a chain complex is the uniqueness of paths between vertices.
This leads to the vanishing of the terms $h_i^0\circ f^0_{i+1}+f^1_{i-1}\circ h_{i+1}^0$ and $h_i^0\circ p_i^1+f^1_{i-1}\circ f^1_i$ when composing boundary maps in \eqref{diag:path_step3}.
However, after adding shortcut edges the paths between vertices are no longer unique.
As illustrated in Figure \ref{fig:h1}, for a relation $r_\sigma^y$, the terms $h_i^0 \circ p_i^1(r_\sigma^y)$ and $f^1_{i-1} \circ f^1_i(r_\sigma^y)$, when restricted to a component, may correspond to different paths connecting the same endpoints and may therefore enclose a cycle. 

\begin{figure}
\centering
\includegraphics[scale=0.55]{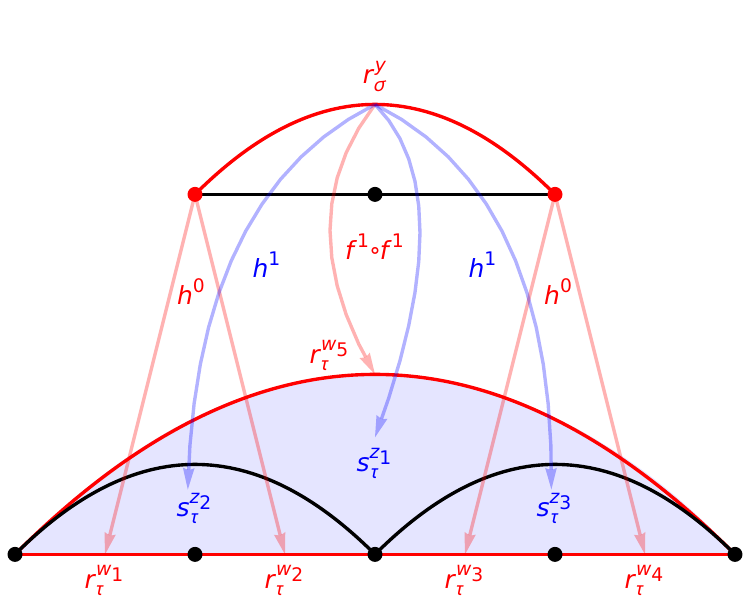}
\caption{Cycle formed by paths $h^0\circ p^1(r^y_\sigma)$ and $f^1\circ f^1(r^y_\sigma)$ and its filling triangles $h^1(r^y_\sigma)$.}
\label{fig:h1}
\end{figure}

Any such cycle constitutes an obstruction to the vanishing of $h_i^0\circ p_i^1+f^1_{i-1}\circ f^1_i$.
To eliminate these obstructions, we fill the cycles with faces, or, in algebraic terms, introduce syzygies, thus extending the modified relations to a free resolution, as depicted in Figure \ref{fig:resolutions_of_length_two}:
\begin{equation}
\label{eq:resUsigma}
\begin{tikzcd}
0 & U_\sigma \arrow[l] & G_\sigma \arrow[l,swap,"p_\sigma^0"] & R_\sigma \arrow[l,swap,"p_\sigma^1"] & S_\sigma \arrow[l,swap,"p_\sigma^2"] & 0 \arrow[l] .
\end{tikzcd}
\end{equation}
The syzygies $s_\sigma^z$ for $S_\sigma$ correspond to the triangles with vertex grades $(x_{(s-1)2^t},x_{s2^t},x_{(s+1)2^t})$ for any odd $s>0$; see Figure~\ref{fig:shortcut_path}.
The grade $z$ is given by the grade of the longest edge, that is, $x_{(s-1)2^t} \vee x_{(s+1)2^t}$.
The map $p_\sigma^2$ is defined by sending $s_\sigma^z$ to the sum of the three edges of the triangle.
We call \eqref{eq:resUsigma} the \define{log-path resolution} of $U_\sigma$, noting that it is a bifiltered two-dimensional simplicial chain complex $\mathcal{L}_\sigma$.
Its construction yields the following property:

\begin{restatable}{lemma}{cyclelemma}
  \label{ref:cycle_lemma}
  Let $r_\sigma^{y_1},\ldots,r_\sigma^{y_l}$ be a cycle in $R_\sigma$.
  Then there exists a unique chain $s_\sigma^{z_1}+\cdots+s_\sigma^{z_q}$
  of $q\leq l-2$ elements in $S_\sigma$ such that $p^2_\sigma(s_\sigma^{z_1}+\cdots+s_\sigma^{z_q})=r_\sigma^{y_1}+\cdots+r_\sigma^{y_l}$.
  Moreover, this chain can be computed
  in $O(l)=O(\log k)$ time.
\end{restatable}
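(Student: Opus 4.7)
My plan is to reinterpret the identity $p^2_\sigma(s^{z_1}_\sigma + \cdots + s^{z_q}_\sigma) = r^{y_1}_\sigma + \cdots + r^{y_l}_\sigma$ as a $\mathbb{Z}_2$-chain relation in the simplicial chain complex of the 2-dimensional complex $\mathcal{L}_\sigma$, and then use two structural facts about $\mathcal{L}_\sigma$ to obtain existence, uniqueness, the bound on $q$, and the runtime. The first fact is that $\mathcal{L}_\sigma$ is \emph{contractible}, which I would prove by induction on $n_\sigma$ via elementary collapses: any triangle whose top edge is not a side of a strictly larger triangle has this top edge as a free face, and collapsing these from the outside reduces $\mathcal{L}_\sigma$ to a simpler log-path complex, possibly a wedge of two such at a single vertex. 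Contractibility makes the augmented simplicial $\mathbb{Z}_2$-chain complex of $\mathcal{L}_\sigma$ exact in positive degrees, so $\ker p^1_\sigma = \im p^2_\sigma$ yields existence and $\ker p^2_\sigma = 0$ yields uniqueness of the chain $s^{z_1}_\sigma + \cdots + s^{z_q}_\sigma$.

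The second structural fact is that the dual graph of $\mathcal{L}_\sigma$---one node per triangle, one edge per pair of triangles sharing an edge in $\mathcal{L}_\sigma$---is a \emph{forest}. The reason is that every triangle's top edge is either a side of at most one strictly larger triangle or lies on the boundary of $\mathcal{L}_\sigma$, and its two smaller side edges are top edges of at most one smaller triangle each; hence every node of the dual graph has at most one parent.

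For the bound $q \le l - 2$, I would argue combinatorially. The selected triangles $\{s^{z_1}_\sigma, \ldots, s^{z_q}_\sigma\}$ span an induced subforest of the dual forest with $c \ge 1$ components of sizes $q_1, \ldots, q_c$ summing to $q$. Each component is a subtree with exactly $q_i - 1$ shared edges used internally, and counting triangle--edge incidences gives $3 q_i - 2(q_i - 1) = q_i + 2$ edges in the $\mathbb{Z}_2$-boundary of that component. Summing produces $l = q + 2c \ge q + 2$, which is the claimed bound.

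For the runtime, a greedy sweep suffices: maintain the current cycle with edges indexed by level, repeatedly pick an edge of the highest level, add the unique triangle of $\mathcal{L}_\sigma$ containing it to the output, and XOR that triangle's boundary into the cycle. Each step is $O(1)$ and the sweep terminates in $q \le l - 2$ steps, for a total of $O(l) = O(\log k)$ time; correctness follows from uniqueness of the filling. The main technical obstacle is to rigorously establish the contractibility and forest structure of $\mathcal{L}_\sigma$; both require careful bookkeeping through the recursive, hierarchical construction of the log-path resolution, especially when $n_\sigma$ is not of the form $2^k + 1$ and the outer triangulation is truncated.
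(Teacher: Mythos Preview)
Your proposal is correct but takes a genuinely different route from the paper. The paper works directly with the planarity of the $1$-skeleton of $\mathcal{L}_\sigma$: it shows that every simple cycle has a unique longest edge, and that the filling of a simple cycle is given exactly by the triangles corresponding to its non-extremal vertices (via the bijection between triangles $(x_{(s-1)2^t},x_{s2^t},x_{(s+1)2^t})$ and interior vertices $x_{s2^t}$). General cycles are handled by first computing an Euler tour and then decomposing it into simple cycles with a stack-based linear scan; the bound $q\le l-2$ falls out since each simple cycle with $m$ edges is filled by exactly $m-2$ triangles. Your approach is more structural: existence and uniqueness come from contractibility of $\mathcal{L}_\sigma$, the bound $q\le l-2$ from an Euler-type count using the forest structure of the dual graph, and the algorithm is a greedy top-down peeling rather than a decomposition into simple cycles. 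The paper's route yields an explicit closed-form description of the filling of a simple cycle, which is pleasant; your route avoids the Euler-tour and decomposition machinery entirely and gives an arguably cleaner termination argument. One imprecision to fix: the highest-level edge of the current cycle is in general contained in \emph{two} triangles of $\mathcal{L}_\sigma$, not one; you want the unique triangle for which it is the \emph{top} (long) edge, and the fact that this triangle must lie in the unique filling follows from a short ascending-chain argument in the dual forest (otherwise a strictly longer edge would appear in the cycle).
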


To fill a cycle in $\mathcal{L}_\sigma$, we decompose it into simple cycles and use the observation that a simple cycle is filled with the triangles corresponding to the non-extremal vertices $x_{s2^t}$ of the cycle, see Figure \ref{fig:embedding}. See Appendix \ref{app:cycle_filling} for details.

\begin{figure}[t]
    \centering
    \includegraphics[scale=0.45]{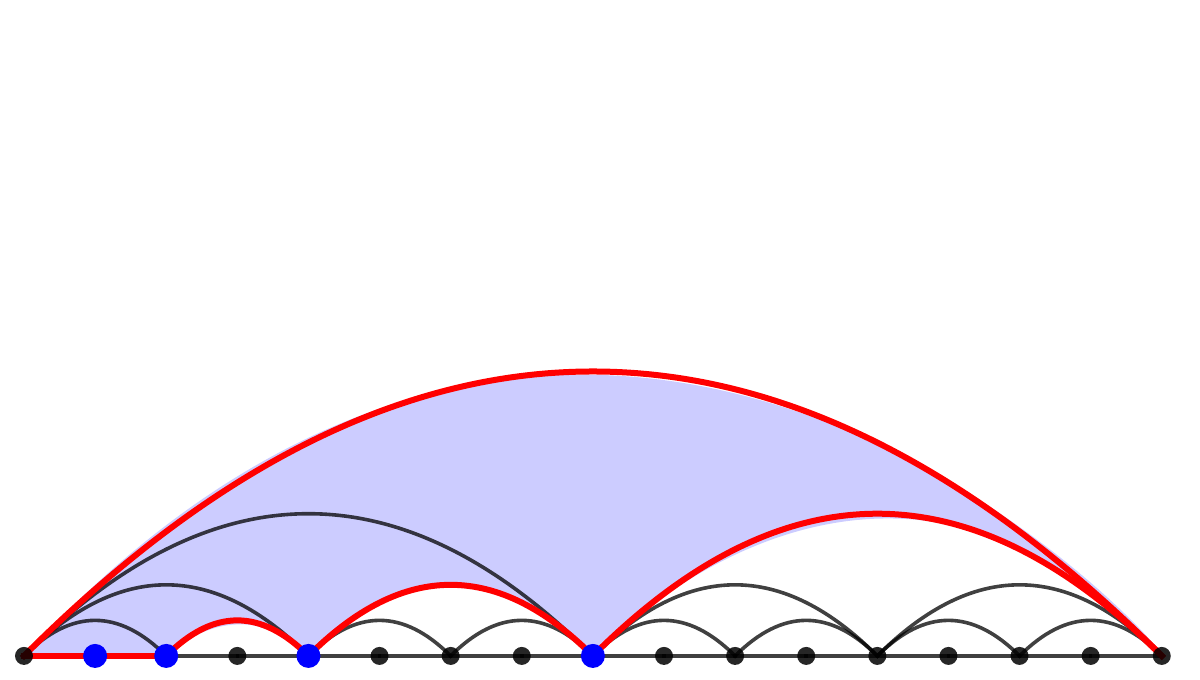}
    \caption{A simple red cycle in $\mathcal{L}$ for $k=4$, together with the filling triangles in blue corresponding to the non-extremal vertices (with respect to the total order).
    Here each triangle $(x_{(s-1)2^t},x_{s2^t},x_{(s+1)2^t})$, with $s>1$ odd, corresponds bijectively to its middle vertex $x_{s2^t}$.
    }
    \label{fig:embedding}
\end{figure}

\subparagraph{Log-path algorithm.}
We now describe the algorithm, again assuming that the input is a $k$-critical simplicial bifiltration of size $n$ and constant dimension $d$ with chain complex $C_\bullet$.

\textit{Step 1:} Compute the log-path resolution $\mathcal{L}_i:G_i\xleftarrow{p^1_i} R_i \xleftarrow{p^2_i}S_i$ of each module $C_i$. This requires iterating through the generators of each input simplex and adding a total of $O(n)$ relations and syzygies, which takes $O(n)$ time in total. 

\textit{Step 2:} Compute $f^0_i:G_i\rightarrow G_{i-1}$ in $O(n \log k)$ time as in Section \ref{sec:path}.

\textit{Step 3:} Compute the maps $f^1_i:R_i\rightarrow R_{i-1}$ and $h^0_i:G_i\rightarrow R_{i-2}$ as in the path algorithm, except that paths
between $g_\sigma^{x_j},g_\sigma^{x_\ell}$ coming from the same simplex $\sigma$
are computed via Lemma~\ref{lem:log-path-lemma}.
We can find every path in $O(\log k)$ time
and compute matrices $[f^1_i]$ and $[h^0_i]$ with at most $O(\log k)$
entries per column in a total running time of $O(n\log k)$ for this step.

\textit{Step 4.} All maps computed in the previous steps yield the following extension of \eqref{diag:path_step3}.

\vspace{-12pt}
\begin{equation} \label{diag:logpath_double_complex_incomplete}
\begin{tikzcd} [column sep=large,row sep=large]
& \vdots \arrow[d] & \vdots \arrow[d] & \vdots \arrow[d] \\
0 & C_3 \arrow[l] \arrow[d,swap,"\partial_3"] & G_3 \arrow[l,swap,"\alpha_3"] \arrow[d,swap,"f_3^0"] \arrow[ddr,"h_3^0"{xshift=-10pt,yshift=10pt}] & R_3 \arrow[l,swap,"p_3^1"] \arrow[d,crossing over,"f_3^1"] & S_3 \arrow[l,swap,"p^2_3"] & 0 \arrow[l] \\
0 & C_2 \arrow[l] \arrow[d,swap,"\partial_2"] & G_2 \arrow[l,swap,"\alpha_2"] \arrow[d,swap,"f_2^0"] \arrow[ddr,"h_2^0"{xshift=-10pt,yshift=10pt}] & R_2 \arrow[l,swap,"p_2^1"{xshift=3pt},crossing over] \arrow[d,"f_2^1"] & S_2 \arrow[l,swap,"p^2_2"{xshift=3pt},crossing over]  & 0 \arrow[l] \\
0 & C_1 \arrow[l] \arrow[d,swap,"\partial_1"] & G_1 \arrow[l,swap,"\alpha_1"] \arrow[d,swap,"f_1^0"] & R_1 \arrow[l,swap,"p_1^1"{xshift=3pt},crossing over] \arrow[d,"f_1^1"] & S_1 \arrow[l,swap,"p^2_1"{xshift=3pt},crossing over] & 0 \arrow[l] \\
0 & C_0 \arrow[l] \arrow[d] & G_0 \arrow[l,swap,"\alpha_0"] \arrow[d] & R_0 \arrow[l,swap,"p_0^1"] \arrow[d] & S_0 \arrow[l,swap,"p^2_0"] & 0 \arrow[l] \\
& 0 & 0 & 0 
\end{tikzcd}
\end{equation} 
\noindent 
A construction as in Section \ref{sec:path} leads to boundary morphisms whose composition is non-zero. Again, the boundary defect is repaired by introducing correction maps $f^2_i$, $h^1_i$, and $H^0_i$:

\vspace{-12pt}
\begin{equation} \label{diag:logpath_double_complex}
\begin{tikzcd} [column sep=large,row sep=large]
& \vdots \arrow[d] & \vdots \arrow[d] & \vdots \arrow[d] & \vdots \arrow[d] \\
0 & C_3 \arrow[l] \arrow[d,swap,"\partial_3"] & G_3 \arrow[l,swap,"\alpha_3"] \arrow[d,swap,"f_3^0"] \arrow[ddr,"h_3^0"{xshift=-10pt,yshift=10pt}] \arrow[dddrr,violet,bend left,"H_3^0"{xshift=-10pt,yshift=-14pt}] & R_3 \arrow[l,swap,"p_3^1"] \arrow[d,crossing over,"f_3^1"] \arrow[ddr,blue,"h_3^1"{xshift=-10pt,yshift=10pt}] & S_3 \arrow[l,swap,"p^2_3"] \arrow[d,red,"f_3^2"] & 0 \arrow[l] \\
0 & C_2 \arrow[l] \arrow[d,swap,"\partial_2"] & G_2 \arrow[l,swap,"\alpha_2"] \arrow[d,swap,"f_2^0"] \arrow[ddr,"h_2^0"{xshift=-10pt,yshift=10pt}] & R_2 \arrow[l,swap,"p_2^1"{xshift=3pt},crossing over] \arrow[d,"f_2^1"] \arrow[ddr,blue,"h_2^1"{xshift=-10pt,yshift=10pt}] & S_2 \arrow[l,swap,"p^2_2"{xshift=3pt},crossing over] \arrow[d,red,"f_2^2"] & 0 \arrow[l] \\
0 & C_1 \arrow[l] \arrow[d,swap,"\partial_1"] & G_1 \arrow[l,swap,"\alpha_1"] \arrow[d,swap,"f_1^0"] & R_1 \arrow[l,swap,"p_1^1"{xshift=3pt},crossing over] \arrow[d,"f_1^1"] & S_1 \arrow[l,swap,"p^2_1"{xshift=3pt},crossing over] \arrow[d,red,"f_1^2"] & 0 \arrow[l] \\
0 & C_0 \arrow[l] \arrow[d] & G_0 \arrow[l,swap,"\alpha_0"] \arrow[d] & R_0 \arrow[l,swap,"p_0^1"] \arrow[d] & S_0 \arrow[l,swap,"p^2_0"] \arrow [d]& 0 \arrow[l] \\
& 0 & 0 & 0 & 0
\end{tikzcd}
\end{equation}

Commutativity of Diagram \eqref{diag:logpath_double_complex_incomplete} yields the following observation

\begin{restatable}{lemma}{kernels}
 \label{lem:kernels}
The following morphisms map to the kernel of $p^1_{i}$:
\begin{equation}
    \label{eq:logpath_morphisms}
f^1_{i+1}\circ p^2_{i+1} , \hspace{0.5cm} f^1_{i+1}\circ f^1_{i+2} + h^0_{i+2}\circ p^1_{i+2} ,  \hspace{0.5cm} \text{and} \hspace{0.5cm} h^0_{i+2}\circ f^0_{i+3} + f^1_{i+1} \circ h^0_{i+3} .
\end{equation}   
\end{restatable}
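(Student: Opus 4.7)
The three morphisms all have codomain $R_i$, so saying they land in $\ker p^1_i$ means that postcomposition with $p^1_i\colon R_i\to G_i$ vanishes. The strategy is to reduce everything to two commutation identities that come directly from the definitions of $f^1$ and $h^0$ (and hold identically for the log-path algorithm, since $f^1_i$ and $h^0_i$ are constructed precisely to make the relevant squares/triangles commute):
\[
p^1_{j-1}\circ f^1_j \;=\; f^0_j\circ p^1_j, \qquad p^1_{j-2}\circ h^0_j \;=\; f^0_{j-1}\circ f^0_j.
\]
Both are visible in Diagram~\eqref{diag:logpath_double_complex_incomplete} and are the log-path analogues of the commutation relations already used in Section~\ref{sec:path}.

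For the first morphism, substitute the first identity (with $j=i+1$) to get $p^1_i\circ f^1_{i+1}\circ p^2_{i+1}=f^0_{i+1}\circ p^1_{i+1}\circ p^2_{i+1}$, and then invoke the fact that \eqref{eq:resUsigma} is a free resolution, hence $p^1\circ p^2=0$; so the composition vanishes.

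For the second morphism, apply the first identity to the $f^1_{i+1}$ factor and the second identity to $h^0_{i+2}$, obtaining
\[
p^1_i\circ\bigl(f^1_{i+1}\circ f^1_{i+2}+h^0_{i+2}\circ p^1_{i+2}\bigr)
= f^0_{i+1}\circ p^1_{i+1}\circ f^1_{i+2}+f^0_{i+1}\circ f^0_{i+2}\circ p^1_{i+2}.
\]
Applying the first identity once more (with $j=i+2$) to $p^1_{i+1}\circ f^1_{i+2}$ makes the two summands equal, and since we work over $\field=\mathbb{Z}_2$ they cancel. The third morphism is handled by the same recipe: substitute to rewrite both summands as $f^0_{i+1}\circ f^0_{i+2}\circ f^0_{i+3}$ (using the second identity on the left summand and the first identity followed by the second identity on the right summand), so they cancel in characteristic two.

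\textbf{Expected obstacle.} There is essentially no obstacle here: once the two commutation identities are in hand, the argument is a purely formal computation in a commuting double-complex-type diagram. The only thing that deserves care is bookkeeping of indices and verifying that the relevant commuting squares actually appear in \eqref{diag:logpath_double_complex_incomplete} before the correction terms $f^2_i$, $h^1_i$, $H^0_i$ are introduced. No properties specific to the log-path resolution (beyond being a chain complex, which gives $p^1\circ p^2=0$) are needed.
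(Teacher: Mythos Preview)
Your proposal is correct and takes essentially the same approach as the paper's own proof: postcompose each morphism with $p^1_i$ and reduce to zero using the two commutation identities $p^1_{j-1}\circ f^1_j=f^0_j\circ p^1_j$ and $p^1_{j-2}\circ h^0_j=f^0_{j-1}\circ f^0_j$, together with $p^1\circ p^2=0$. The paper's computations are line-for-line the same as yours.
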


The log-path resolution $\mathcal{L}_i$ of $C_i$ is the direct sum of the simplicial chain complexes $\mathcal{L}_\sigma$, taken over each simplex $\sigma$ in $C_i$.
The map  $p^1_i$ is given by the $1$-dimensional boundary map of this complex, and hence its kernel is the collection of cycles in the $1$-skeletons of $\mathcal{L}_\sigma$. 
More specifically, 
each triangle $s_\sigma^z$ in $S_{i+1}$ is sent by $p_{i+1}^2$ to its boundary, a sum of three edges.

By Lemma \ref{lem:kernels}, applying $f^1_{i+1}$ to this boundary yields another cycle in $R_i$, which by construction of $\mathcal{L}_i$ decomposes as $\sum_\tau c_\tau$ for $\tau$ running over the facets of $\sigma$.
For each such cycle $c_\tau$, by Lemma \ref{ref:cycle_lemma} there is a unique chain $b_\tau$ in $S_i$ such that $p^2_i(b_\tau)=c_\tau$, as illustrated in Figure \ref{fig:f2}.
The map $f^2_{i+1}$ is then defined via $f^2_{i+1}(s_\sigma^z):=\sum_{\tau}b_\tau$, with $\tau$ running over the facets of $\sigma$.
Thus $f^2_{i+1}$ repairs the boundary defect $f^1_{i+1}\circ p^2_{i+1}$ as $f^1_{i+1}\circ p^2_{i+1}=p^2_{i}\circ f^2_{i+1}$. 
\begin{figure}
\centering
\includegraphics[scale=0.55]{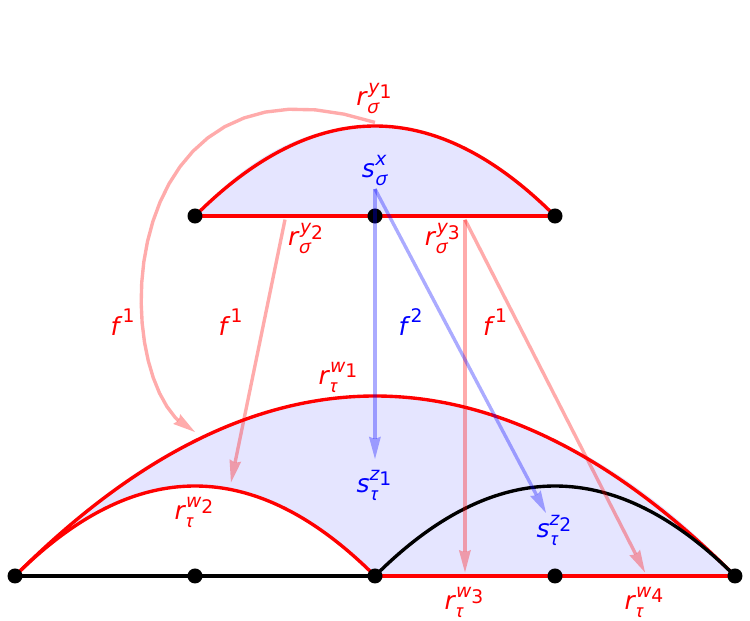}
\caption{Cycle $f^1\circ p^2(s_\sigma^x)$ and its filling triangles $f^2(s_\sigma^x)$.}
\label{fig:f2}
\end{figure}
Similar arguments lead to correction terms $h^1_i$, and $H^0_i$ satisfying 
\begin{equation}
    \label{eq:helper}
    h^0_i\circ p^1_i + f^1_{i-1}\circ f^1_i = p^2_{i-2}\circ h^1_i \hspace{0.2cm} \text{ and }\hspace{0.2cm} h^0_{i-1} \circ f^0_i + f^1_{i-2}\circ h^0_{i} =p^2_{i-3}\circ H_i^0.
\end{equation}
Their construction is analogous to the above, and we omit the details.


Note that in each of the three cases $f^2_{i+1}$, $h^1_i$, and $H^0_i$, the construction involves 
matrix multiplication (evaluation of the left hand sides in \eqref{eq:helper}), but also 
cycle filling (invoking Lemma~\ref{ref:cycle_lemma}). 
The former is efficient, as paths of logarithmic length lead to sparse matrices $[h_i^0]$ and $[f_i^1]$.
More precisely, the column sparsity of $[f_i^1]$ and $[h_i^0]$ is $O(\log k)$, while in $[p_i^2]$ every column has exactly three non-zero entries.
All sparse matrix products in Step $4$ can thus be computed in $O(n\log^2 k)$ and the worst case column sparsity of the results is $O(\log^2k)$.
The resulting matrices contain cycles of length $O(\log^2k)$, which can be filled by triangles in $O(\log^2k)$ time. Overall, Step $4$ takes $O(n\log^2k)$ time.

We can now assemble the output chain complex as
\begin{equation}  \label{diag:logpath_output}
\scalebox{0.8}{
\begin{tikzcd}[column sep=large,row sep=large,ampersand replacement=\&,every label/.append style = {font = \small}]
0 
\&[-5pt] G_0 \arrow[l]
\&[7pt] G_1\oplus R_0 \arrow{l}[swap]{\begin{pmatrix}f^0_1 & p_0^1\end{pmatrix}}
\&[12pt] G_2\oplus R_1 \oplus S_0 \arrow{l}[swap]{\begin{pmatrix}f^0_2 & p_1^1 & 0 \\ h_2^0 & f_1^1 & p_0^2\end{pmatrix}}
\&[12pt] G_3\oplus R_2\oplus S_1 \arrow{l}[swap]{\begin{pmatrix}f^0_3 & p_2^1 & 0 \\ h_3^0 & f_2^1 & p_1^2 \\ H_3^0 & h_2^1 & f_1^2 \end{pmatrix}}
\&[-5pt] \cdots \arrow[l]
\end{tikzcd}
}
\end{equation}

\begin{restatable}{theorem}{quasiisologpath}
\label{thm:quais_iso_logpath}
The chain complex \eqref{diag:logpath_output} is a free resolution of $C_\bullet$.    
\end{restatable}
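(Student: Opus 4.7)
The proof has two parts. First, I would show that the proposed boundary matrix in \eqref{diag:logpath_output} indeed squares to zero, so that we have a chain complex. Second, I would show that the natural augmentation from this chain complex to $C_\bullet$ is a quasi-isomorphism. Freeness is immediate, since every summand in the total complex is a direct sum of the free modules $G_i$, $R_i$, and $S_i$.

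\textbf{Chain complex property.} Expanding the product of two consecutive boundary matrices yields a $3\times 3$ block product with nine entries. The six entries in the top two rows correspond, one-by-one, to the defining identities of the correction maps: the $(1,1)$ and $(2,1)$ entries give $p^1\circ h^0=f^0\circ f^0$ and $p^2\circ H^0=h^0\circ f^0+f^1\circ h^0$; the $(1,2)$ and $(2,2)$ entries give $p^1\circ f^1=f^0\circ p^1$ and $p^2\circ h^1=h^0\circ p^1+f^1\circ f^1$ (the defining equations of $f^1$ and of $h^1$ in \eqref{eq:helper}); and the $(1,3)$ and $(2,3)$ entries reduce respectively to $p^1\circ p^2=0$ (a chain-complex identity in $\mathcal{L}_\sigma$) and to $f^1\circ p^2=p^2\circ f^2$. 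The three bottom-row entries are a priori nonzero compositions of correction maps. The key observation is that, since each $\mathcal{L}_\sigma$ is an exact sequence $U_\sigma\leftarrow G_\sigma\leftarrow R_\sigma\leftarrow S_\sigma\leftarrow 0$, the map $p^2_\bullet$ is injective; hence it suffices to show that $p^2$ annihilates each bottom-row entry. Composing $p^2$ on the left of the $(3,j)$ entry and repeatedly substituting the six defining identities listed above collapses the expression into a sum of two products, each of which contains a factor that already equals zero in characteristic two. For instance, the $(3,3)$ entry $h^1\circ p^2+f^2\circ f^2$ becomes, after applying $p^2$, the sum $h^0\circ p^1\circ p^2+f^1\circ(f^1\circ p^2+p^2\circ f^2)$, both of whose summands vanish; the $(3,1)$ and $(3,2)$ entries are handled analogously, with the final simplification using $2=0$ in $\field=\mathbb{Z}_2$.

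\textbf{Quasi-isomorphism.} The augmentation $\varepsilon_\bullet\colon F_\bullet\to C_\bullet$ is defined by $\alpha_n$ on the $G_n$ summand and by $0$ on the $R_{n-1}$ and $S_{n-2}$ summands; commutativity of Diagram \eqref{diag:logpath_double_complex} together with $\alpha\circ p^1=0$ makes $\varepsilon_\bullet$ a chain map. To see that it induces an isomorphism on homology, I would filter $F_\bullet$ by the columns of the double complex, i.e.\ set $F^{\leq q}_n$ to be the sum of $G_n,R_{n-1},\dots$ up to $q$-th column. The associated graded complex decomposes as a direct sum over $n$ of the log-path resolutions $\mathcal{L}_n$, each of which is exact except in degree zero, where its homology is $C_n$. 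A standard spectral sequence argument (or equivalently an acyclic assembly / Cartan--Eilenberg resolution argument) then shows that the $E_1$-page reduces to the complex $(C_\bullet,\partial_\bullet)$ and that the spectral sequence collapses at $E_2$, identifying $H_\bullet(F_\bullet)\cong H_\bullet(C_\bullet)$ via $\varepsilon_\bullet$.

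\textbf{Main obstacle.} The conceptually interesting step is the vanishing of the bottom row of $\partial^2$: unlike the other six identities, it is not built into the construction of a single correction map but follows from a higher coherence among them, made available only because $p^2$ is injective (the log-path resolutions have length exactly two) and $\mathrm{char}\,\field=2$. The rest of the work is routine bookkeeping of indices and a standard filtration argument; the only care needed for the spectral sequence is verifying that each $\mathcal{L}_n$ is an exact resolution of $C_n$, which has already been established in the construction of the log-path resolution.
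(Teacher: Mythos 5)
Your proof of the chain-complex property follows essentially the same block-matrix calculation as the paper: expand the $3\times 3$ product, observe that the top two rows vanish by the defining identities and commutativity, and dispose of the bottom row by applying the monomorphism $p^2$ and simplifying over $\mathbb{Z}_2$. The quasi-isomorphism step, however, takes a genuinely different route. The paper proves that the mapping cone of $\varepsilon_\bullet$ is acyclic by an explicit construction: given a cycle $(x_1,x_2,x_3,x_4)$ it produces a preimage $(0,y_1,y_2,y_3)$ using surjectivity of $\alpha$ and exactness of the rows. You propose instead a filtration / spectral-sequence argument, which is a legitimate and somewhat more conceptual alternative.

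There is, however, a concrete error in your filtration. Setting $F^{\leq q}_n$ to be the sum of the first $q$ columns (so $F^{\leq 0}_n=G_n$, $F^{\leq 1}_n=G_n\oplus R_{n-1}$, $F^{\leq 2}_n=G_n\oplus R_{n-1}\oplus S_{n-2}$) does not define a filtration of the total complex: the diagonal components $h^0_n\colon G_n\to R_{n-2}$, $h^1_n\colon R_n\to S_{n-2}$ and $H^0_n\colon G_n\to S_{n-3}$ all raise the column index and carry $F^{\leq q}$ outside itself. Moreover, even if the differential did restrict, the associated graded of the column filtration would be the columns $G_\bullet$, $R_\bullet$, $S_\bullet$ with the differentials $f^0,f^1,f^2$ (which are not even chain complexes, since $f^0\circ f^0=p^1\circ h^0\neq 0$), not the log-path resolutions $\mathcal{L}_n$ that you claim. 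What you want is the filtration by the \emph{row} index of Diagram~\eqref{diag:logpath_double_complex}: let $\Phi^{q}F_n$ be the sum of those summands $G_i$, $R_i$, $S_i$ appearing in $F_n$ with $i\leq q$. Every component of the boundary ($f^\bullet$, $p^\bullet$, $h^\bullet$, $H^0$) lowers or preserves the row index, so this is a genuine filtration, it is finite in each total degree, and its $q$-th associated graded piece is exactly $\mathcal{L}_q=(G_q\leftarrow R_q\leftarrow S_q)$ with differentials $p^1_q,p^2_q$. With that correction, your $E_1$-page is $C_\bullet$ with differential induced by $f^0$ (which is $\partial_\bullet$), the page is concentrated in complementary degree $0$ so the sequence degenerates at $E_2$, and the identification via $\varepsilon_\bullet$ is immediate. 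Compared to the paper's explicit mapping-cone computation, your argument once repaired is cleaner but relies on spectral-sequence machinery; the paper's version is longer but entirely elementary, and readily generalizes to the three-parameter setting mentioned in the discussion.
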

For a proof, see Appendix \ref{app:quasiiso_logpath} and \ref{app:log_path_algo}.
We summarize our observations on the running time in the following theorem.

\begin{restatable}{theorem}{logpathalgo}
    \label{thm:logpath_theorem}
    A free resolution of the chain complex $C_\bullet$ induced by a $k$-critical bifiltration of description size $n$ can be computed in $O(n\log^2 k)$ time and has $O(n\log^2 k)$ size.    
\end{restatable}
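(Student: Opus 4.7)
The plan is to separate correctness from complexity. Correctness---that Diagram \eqref{diag:logpath_output} is a free resolution of $C_\bullet$---is handled by Theorem \ref{thm:quais_iso_logpath}, so the task reduces to bounding the running time and the total description size of the four stages of the log-path algorithm.

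First I would analyse Steps 1 and 2. Step 1 constructs the log-path resolution of each $U_\sigma$: by the shortcut definition, a simplex $\sigma$ with $n_\sigma$ minimal grades contributes $O(n_\sigma)$ generators, relations and syzygies, so the total number of basis elements of $G_i, R_i, S_i$ is $O(n)$ and the time is $O(n)$. Step 2 computes the lifts $f^0_i$ exactly as in the path algorithm; since the input has fixed dimension $d$, each generator has a constant number of facets and the search among at most $k$ candidate grades per facet gives $O(n\log k)$ time.

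For Step 3, each column of $[f^1_i]$ or $[h^0_i]$ is assembled by running over a constant number of facets $\tau$ of a simplex $\sigma$ and invoking Lemma \ref{lem:log-path-lemma} on each pair of generators of $U_\tau$ to be connected. The lemma produces a path of length $O(\log k)$ in $O(\log k)$ time, so both matrices have column sparsity $O(\log k)$ and the step costs $O(n\log k)$ in total.

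The heart of the argument is Step 4, which produces $f^2_{i+1}$, $h^1_i$, and $H^0_i$. I would first bound the three sparse matrix products appearing on the left-hand sides of $f^1_{i+1}\circ p^2_{i+1}$ and of \eqref{eq:helper}: multiplying matrices of column sparsity $O(\log k)$ (or exactly $3$ for $[p^2_i]$) yields columns of sparsity $O(\log^2 k)$ and can be computed in $O(n\log^2 k)$ total time. By Lemma \ref{lem:kernels} each such column lies in the kernel of the corresponding $p^1$-map, and since the log-path resolution of $C_\bullet$ is the direct sum of the simplicial complexes $\mathcal{L}_\tau$, the column decomposes facet-wise into cycles of length $\ell=O(\log^2 k)$ inside individual $\mathcal{L}_\tau$. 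By Lemma \ref{ref:cycle_lemma} each such cycle is filled by a unique chain of at most $\ell-2$ syzygies in $O(\ell)$ time. Summed over the $O(n)$ columns, cycle filling costs $O(n\log^2 k)$ and produces $O(n\log^2 k)$ non-zero entries. Adding the bounds from Steps 1--4 gives the claimed $O(n\log^2 k)$ for both running time and output description size.

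The main obstacle I anticipate is pinning down the length of the cycles processed in Step 4: the $O(\log^2 k)$ bound on $\ell$ is not an arbitrary choice but is forced by the fact that each such cycle is obtained by composing at most $O(\log k)$ shortcut paths of length $O(\log k)$ lying in the same $\mathcal{L}_\tau$. Once this combinatorial bound is justified from the definitions of $f^1$, $h^0$ and $p^2$, Lemma \ref{ref:cycle_lemma} applies directly and the remaining complexity accounting is routine.
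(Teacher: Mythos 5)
Your proposal is correct and follows essentially the same route as the paper's proof in Appendix \ref{app:log_path_algo}: delegate correctness to Theorem \ref{thm:quais_iso_logpath}, then bound each of Steps 1--4 by the same sparse-matrix-product and cycle-filling arguments, with the dominant $O(n\log^2 k)$ cost coming from Step 4. The ``obstacle'' you flag at the end---justifying the $O(\log^2 k)$ cycle length---is handled in the paper exactly as you suggest, by observing that composing matrices of column sparsity $O(\log k)$ (e.g.\ $[f^1]\circ[f^1]$ or $[f^1]\circ[h^0]$) yields columns of sparsity $O(\log^2 k)$, which are then cycles of that length inside individual $\mathcal{L}_\tau$; the paper merely keeps the dimension $d$ explicit as a polynomial factor before absorbing it as a constant.
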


\ignore{
\begin{theorem}
    \label{thm:logpath_theorem}
    A free resolution of the chain complex $C_\bullet$ induced by a $k$-critical bifiltration of description size $n$ can be computed in $O(n\log^2 k)$ time and has $O(n \log^2 k)$ size.
\end{theorem}
}

Applying this result to the modified wheel example (Appendix \ref{app:wheel_example}) yields a non-minimal free resolution with smaller description size than any minimal one. Thus, perhaps surprisingly, minimal free resolutions are not asymptotically optimal in terms 
of description size.

\section{Experimental evaluation}
\label{sec:experiments}
We implemented the path and log-path algorithm for computing a free resolution
of a bifiltered chain complex 
in a C++ library called \textsc{multi-critical} \footnote{\url{https://bitbucket.org/mkerber/multi_critical/src/main/}}. The benchmark files are available upon request.  
The code expects a bifiltration given in
scc2020 format \cite{scc2020}%
, with the difference that for every simplex an arbitrarily long
sequence of bigrades can be specified.
The output is a chain complex in ``proper''
scc2020 format, representing a free filtered chain complex.
Instead
of outputting the result of our algorithms directly, the software can also
post-process the filtered chain complex with the \emph{multi-chunk} \cite{multi_chunk}
library for minimizing the chain complex~\cite{fk-chunk,fkr-compression}. 

All experiments were performed on a workstation with an Intel(R) Xeon(R) CPU E5-1650 v3 CPU (6 cores, 12 threads, 3.5GHz) and 64 GB RAM, running Ubuntu 16.04.5.

\subparagraph{Test instances.}
We ran our experiments on a total of 342 test instances from different
sources.
First, we generated degree-Rips bifiltrations
for (noisy) point samples drawn from a torus
embedded in $\R^5$, from the ``swiss roll'' embedded in $\R^3$,
and from the $7$-dimensional unit cube, using the python package \textsc{tadasets}.
In all cases, we generated the complex up to $3$-simplices.
\ignore{
We also included \emph{compressed} degree-Rips bifiltrations in our experiments
which are currently researched in a parallel project~\cite{anonymous}.
The idea of the approach is that generators of $0$- and $1$-simplices
in the deg-Rips complex can be simplified (i.e., their criticality is reduced)
without changing the homology of the complex. This allowed us to consider instances with more
points; however, since the results were qualitatively similar to the full degree-Rips case,
we do not give details outcomes on the compressed instances in present paper.
}

Furthermore, we used a simple method for generating bifiltrations of a simplicial
complex equipped with two non-negative
real-valued functions $f_0$ and $f_1$ on its simplices,
described in Figure~\ref{fig:bifunction}.
We employed this construction on meshes from the Aim@Shape
repository\footnote{\url{http://visionair.ge.imati.cnr.it/}},
using the squared mean curvature and
the distance to the barycenter as the two functions.
Furthermore, we bifiltered the Delaunay triangulations from the same
point samples as above (torus, cube, swiss roll)
using the minimum enclosing
radius of a simplex and the
average distance
to the $\lfloor\log n\rfloor$-nearest neighbors as filter functions.
In all cases, we generated $k$-critical instances with $k=2,4,8$.
We computed the filtration values with \textsc{CGAL}~\cite{cgal-mesh,cgal-nn,cgal-meb}.

\begin{figure}
  \centering
  \includegraphics[width=6cm]{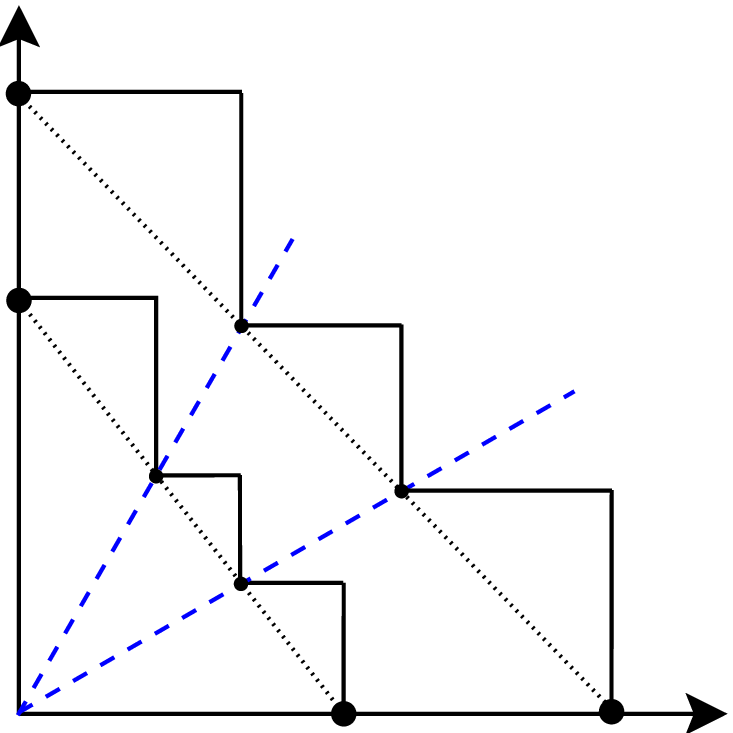}
 
  \caption{The figure shows the construction for two simplices $\sigma$ and $\tau$.
    The bigrades $(0,f_0(\sigma))$ and $(f_1(\sigma),0)$ are connected by a line segment
    (dotted line), and the positive quadrant is equally dissected using $(k-2)$ lines
    through the origin (blue dashed lines, here $k=4$). The intersection of dotted
    and dashed lines yield $(k-2)$ further bigrades for $\sigma$. The staircase
    for $\sigma$ is drawn in solid. The same construction is done for $\tau$; note that
    if $f_0(\sigma)\leq f_0(\tau)$ and $f_1(\sigma)\leq f_1(\tau)$, the staircase for $\sigma$
    is below the staircase for $\tau$. In particular, the construction bifilters
    a simplicial complex if the functions $f_0$ and $f_1$ respect its face relations.}
   \label{fig:bifunction}
\end{figure}

Finally, we generated wheel bifiltrations as in Figure~\ref{fig:wheels} for different sample sizes.

\subparagraph{Comparison of path and log-path algorithms.}
Table~\ref{tbl:path_vs_tree} shows some of the results obtained
from both algorithms on these datasets;
the results for the other instances are similar.

In almost all cases, the log-path algorithm is slower than the path algorithm
by a factor of up to 3 and produces an output complex of around twice the size.
Also, it uses
around twice as much memory (not shown in the table).
Moreover, minimizing the chain complex
(rightmost column of Table~\ref{tbl:path_vs_tree})
further reduces the complex size significantly, generally at a small cost~--
sometimes it even saves time because the time used for minimization is less than
the cost of producing the larger output file.

The only exception is the wheel bifiltration,
where our experiments show the expected asymptotic worst-case behavior, with the log-path algorithm
outperforming the path algorithm.
In this case, minimizing the
chain complex produced by the log-path algorithm will necessarily introduce a dense matrix 
and destroy the advantage of the log-path algorithm~-- for instance, its running time in
the instance in the last row of Table~\ref{tbl:path_vs_tree} is 33 seconds, with 14.4 seconds for
minimization itself and 18 seconds to write the 2.3 GB output file.

\begin{table}[h]
  \includegraphics[width=\textwidth]{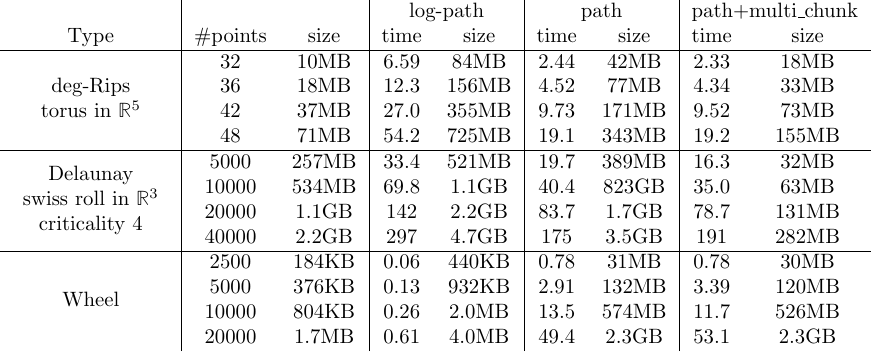}
    \caption{Comparison of the path and the log-path algorithm. All times are
      in seconds.
      The ``size'' columns give the size of the input and output files.
      For deg-Rips and Delaunay, the numbers are averaged over 5 independently generated instances.
      }
    \label{tbl:path_vs_tree}
\end{table}

\subparagraph{Computing minimal presentations.}
For a multi-critical filtered chain complex, we investigate the task of computing minimal presentation
matrices for homology.
We provide two variants:
In the first variant, we compute a free resolution of the input chain complex with the path
algorithm (which is usually faster than the log-path algorithm, as seen in the previous experiment) and subsequently
minimize using \textsc{multi-chunk}.
We then compute minimal presentations for each homology degree separately using the \textsc{mpfree} library~\cite{kr-fast,fkr-compression}.

The second variant operates by directly structuring the input complex into chain complex segments per degree.
It then uses the Chacholski--Scolamiero--Vaccarino algorithm to compute a free implicit representation for each segment, before finally employing \textsc{mpfree} once more to generate a minimal presentation.

The results of this experiment are presented in Table~\ref{tbl:compare_csv}.
For the degree-Rips instances,
the first variant provides a modest improvement over the second.
The reason
is that the vast majority of simplices are in the top dimension $3$, so that computing the
presentation for $H_2$ is the bottleneck in the computation.
This step, however, does not differ significantly
in both approaches: most of the time is spent to determine which $3$-simplices are killing $2$-cycles,
by reducing the boundary matrix for $2$- and $3$-simplices.

For the bifunction instances, the speed-up of the first variant using free resolutions is much more pronounced.
The reason is that these instances have a more balanced distribution of simplices over different dimensions:
while for the second variant, the algorithm for $H_k$ still has to find the bounding $(k+1)$-simplices for every $k$,
the multi-chunk algorithm makes use of the clearing optimization~\cite{ck-twist} and hence avoids the reduction of large parts
of the boundary matrices. Remarkably, this technique is so effective that computing \emph{all} minimal presentations
via free resolutions is faster than computing a \emph{single} minimal presentation via the approach using the Chacholski--Scolamiero-Vaccarino algorithm,
even though restricting to a single dimension $k$ allows this approach to disregard all chains in dimensions other than $k+1$, $k$, or $k-1$.

\begin{table}[h]
  \includegraphics[width=9.5cm]{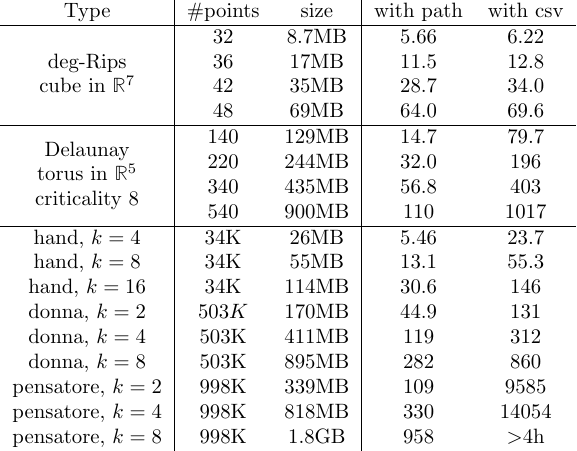}
    \caption{Computing all minimal presentations with the path
      algorithm and with the  Chacholski--Scolamiero--Vaccarino algorithm. All running times are in seconds.}
    \label{tbl:compare_csv}
\end{table}

\subparagraph{Comparison with Macaulay2.}

While the \textsc{Macaulay2} software includes a general \texttt{freeResolution} routine \cite{M2_freeres}, it is unsuitable for our purposes.
Its use requires converting our bigraded input into a graded chain complex over $\mathbb{Z}_2[x,y]$, a conversion that we found to be slow itself.
More importantly, the core computation in \textsc{Macaulay2} was orders of magnitude slower than our method, even on much smaller instances.
We infer that the software was not designed for the large inputs typical of TDA.
The conversion script is available on request.

\section{Discussion} 
\label{sec:conclusion}
Our experimental evaluation suggests that the path algorithm often exhibits slightly
better performance, which the log-path algorithm is more robust towards
``bad'' instances, with a relatively low overhead.
Together, both
variants contribute towards an efficient
computational pipeline for multi-critical bifiltrations.
Our results also complement recent development for computing degree-Rips
bifiltrations more efficiently~\cite{deg-rips-submission}.
The finding that minimal chain complexes may necessitate a quadratic size in sparse matrix representations suggests that these representations may not be universally ideal for boundary matrices.
Specifically, the matrix in Figure~\ref{fig:wheels} admits a linear-size description, illustrating a gap in current approaches.
It would be interesting to balance effective worst-case compression with efficient matrix processing, possibly by using an alternative data structure or by selective shortcutting.

Finally, our approach partially extends to simplicial complexes
filtered with three parameters: a simplexwise minimal free resolution now has length
$2$, and suitable connecting maps compose into the same diagram as 
(\ref{diag:logpath_double_complex}).
Moreover, as described by
Miller and Sturmfels~\cite{miller_sturmfels_book}, the simplexwise
free resolution carries the structure of a planar graph,
leading to a cubic-time algorithm. 
To break the cubic barrier, we will need to generalize the shortcut idea
of Section~\ref{sec:tree}
from paths to planar graphs, extending the free resolution of a simplex
to length $3$ (and introducing even more maps). We leave the details to future work.

\newpage

\bibliographystyle{plainurl}
\bibliography{socg}

\newpage

\appendix

\section{Proofs and details for the path algorithm}
\label{app:path}

\subsection{Proof of quasi-isomorphism} 
\label{app:path_correctness}

For convenience, recall that Diagram \eqref{diag:path_step3} consists of the construction 
\begin{equation}
\label{diag:app_path_double}
\begin{tikzcd}
& \vdots \arrow[d] & \vdots \arrow[d] & \vdots \arrow[d] \\
0 & C_3 \arrow[l] \arrow[d,swap,"\partial_3"] & G_3 \arrow[l,swap,"\alpha_3"] \arrow[d,swap,"f^0_3"] \arrow[ddr,dashed,"h^0_3"{xshift=-9pt,yshift=9pt}] & R_3 \arrow[l,swap,"p^1_3"] \arrow[d,"f^1_3"] & 0 \arrow[l] \\
0 & C_2 \arrow[l] \arrow[d,swap,"\partial_2"] & G_2 \arrow[l,swap,"\alpha_2"] \arrow[d,swap,"f^0_2"] \arrow[ddr,dashed,"h^0_2"{xshift=-9pt,yshift=9pt}] & R_2 \arrow[l,swap,"p_2^1"{xshift=3pt},crossing over] \arrow[d,"f^1_2"] & 0 \arrow[l] \\
0 & C_1 \arrow[l] \arrow[d,swap,"\partial_1"] & G_1 \arrow[l,swap,"\alpha_1"] \arrow[d,swap,"f^0_1"] & R_1 \arrow[l,swap,"p^1_1"{xshift=3pt},crossing over] \arrow[d,"f^1_1"] & 0 \arrow[l] \\
0 & C_0 \arrow[l] \arrow[d] & G_0 \arrow[l,swap,"\alpha_0"] \arrow[d] & R_0 \arrow[l,swap,"p^1_0"] \arrow[d] & 0 \arrow[l] \\
& 0 & 0 & 0
\end{tikzcd}
\end{equation}

whose spaces and morphisms are assembled to the output
\begin{equation}
\label{diag:app_path_output}
\scalebox{0.9}{
\begin{tikzcd}[column sep=large,row sep=large,ampersand replacement=\&,
every label/.append style = {font = \small}]
0 
  \&[-5pt] G_0 \arrow[l] 
  \&[10pt] G_1 \oplus R_0 \arrow[l,swap,"{\begin{pmatrix} f^0_1 & p^1_0 \end{pmatrix}}"] 
  \&[10pt] G_2 \oplus R_1 \arrow[l,swap,"{\begin{pmatrix} f^0_2 & p^1_1 \\ h^0_2 & f^1_1 \end{pmatrix}}"] 
  \&[10pt] G_3 \oplus R_2 \arrow[l,swap,"{\begin{pmatrix} f^0_3 & p^1_2 \\ h^0_3 & f^1_2 \end{pmatrix}}"] 
  \&[-5pt] \cdots \arrow[l]
\end{tikzcd}}
\end{equation}
which, as we will show now, is a free resolution of $C_\bullet$.

Diagram \eqref{diag:app_path_double} is commutative, that is, each square commutes and the maps $h^0_i$ satisfy
\begin{equation}
\label{eq:path_homotopy_properties}
    f^0_i \circ f^0_{i+1} = p^1_{i-1} \circ h^0_{i+1} \quad \text{and} \quad h^0_{i+1}\circ p^1_{i+1} = f^1_{i} \circ f^1_{i+1}.
\end{equation}
Here we note that the second property in \eqref{eq:path_homotopy_properties} follows from the fact that
\begin{equation*}
\begin{aligned}
p_{i-1}^1\circ (h^0_{i+1}\circ p^1_{i+1}+f^1_{i} \circ f^1_{i+1})&=f_{i}^0\circ f_{i+1}^0\circ p_{i+1}^1+f_i^0\circ p_i^1\circ f_{i+1}^1 \\ &=f_{i}^0\circ f_{i+1}^0\circ p_{i+1}^1+f_i^0\circ f_{i+1}^0\circ p_{i+1}^1=0
\end{aligned}
\end{equation*}
and $p_{i-1}^1$ is a monomorphism. Each horizontal sequence is exact, as $(G_i\xleftarrow{p^1_i} R_i,\alpha_i)$ is a free resolution of $C_i$. 

\subparagraph{Chain complex property.} We show that the sequence in Diagram \eqref{diag:app_path_output} is a chain complex. Indeed, it holds that 
\ignore{
\begin{align*}
\begin{pmatrix}f^0_i & p^1_{i-1} \\ h^0_i & f^1_{i-1} \end{pmatrix}\circ \begin{pmatrix}f^0_{i+1} & p^1_{i} \\ h^0_{i+1} & f^1_{i} \end{pmatrix}=\begin{pmatrix}f^0_{i}\circ f^0_{i+1}+p^1_{i-1}\circ h^0_{i+1} & h^0_i\circ f^0_{i+1}+f^1_{i-1}\circ h^0_{i+1} \\ f^0_i\circ p^1_i+p^1_{i-1}\circ f^1_i & h^0_i\circ p^1_i+f^1_{i-1}\circ f^1_i \end{pmatrix} = 0
\end{align*}
}
\begin{align*}
\begin{pmatrix}f^0_i & p^1_{i-1} \\ h^0_i & f^1_{i-1} \end{pmatrix}\circ \begin{pmatrix}f^0_{i+1} & p^1_{i} \\ h^0_{i+1} & f^1_{i} \end{pmatrix}=\begin{pmatrix}f^0_{i}\circ f^0_{i+1}+p^1_{i-1}\circ h^0_{i+1} & f^0_i\circ p^1_i+p^1_{i-1}\circ f^1_i \\ h^0_i\circ f^0_{i+1}+f^1_{i-1}\circ h^0_{i+1} & h^0_i\circ p^1_i+f^1_{i-1}\circ f^1_i \end{pmatrix} = 0
\end{align*}
The upper right entry is zero because $(f^0_i,f^1_i)$ is a chain map. The upper left and lower right entries are zero due to Equation \eqref{eq:path_homotopy_properties}. The postcomposition of the lower left entry with $p^1_{i-2}$ is zero by Equation \eqref{eq:path_homotopy_properties}. Thus, by exactness of the rows in Diagram \eqref{diag:app_path_double}, $ \im (h^0_i \circ f^0_{i+1}+f^1_{i-1}\circ h^0_{i+1})\subseteq \ker p^1_{i-2} = 0 $.

\subparagraph{Quasi-isomorphism.}  The upper row of the following diagram is our output chain complex. 

\begin{equation} \label{eq:quasi_iso}
\adjustbox{max width=0.92\textwidth}{
\begin{tikzcd}[column sep=large,row sep=large,ampersand replacement=\&,every label/.append style = {font = \small}]
0 \&[-5pt] G_0 \arrow[l] \arrow[d,"\alpha_0"] \&[10pt] G_1\oplus R_0 \arrow{l}[swap]{\begin{pmatrix}f^0_1 & p^1_0\end{pmatrix}} \arrow{d}{\begin{pmatrix}\alpha_1 & 0\end{pmatrix}} \&[10pt] G_2\oplus R_1 \arrow{l}[swap]{\begin{pmatrix}f^0_2 & p^1_1 \\ h^0_2 & f^1_1\end{pmatrix}} \arrow{d}{\begin{pmatrix}\alpha_2 & 0\end{pmatrix}} \&[10pt] G_3\oplus R_2 \arrow{l}[swap]{\begin{pmatrix}f^0_3 & p^1_2 \\ h^0_3 & f^1_2\end{pmatrix}} \arrow{d}{\begin{pmatrix}\alpha_3 & 0\end{pmatrix}} \&[-5pt] \cdots \arrow[l] \\[10pt]
0 \& C_0 \arrow[l] \& C_1 \arrow[l,swap,"\partial_1"] \& C_2 \arrow[l,swap,"\partial_2"] \& C_3 \arrow[l,swap,"\partial_3"] \& \cdots \arrow[l].
\end{tikzcd}}
\end{equation}

We show that $\alpha:\begin{pmatrix} \alpha_0 \end{pmatrix}, \begin{pmatrix} \alpha_1 & 0\end{pmatrix}, \begin{pmatrix} \alpha_2 & 0 \end{pmatrix},\dots$ is a quasi-isomorphism between the complex in \eqref{diag:app_path_output} and $C_\bullet$. For that reason, we show that the mapping cone $\text{cone}(\alpha)$ of the vertical maps in Diagram \eqref{eq:quasi_iso} is an acyclic complex which in turn implies that $\alpha$ is a quasi isomorphism (see Corollary $10.41$ in \cite{rotman}). $\text{cone}(\alpha)$ is the chain complex

\begin{equation} \label{eq:cone}
\scalebox{0.8}{
\begin{tikzcd}[column sep=large,ampersand replacement=\&,every label/.append style = {font = \small}]
0 \&[-12pt] C_0 \arrow[l] \&[8pt] C_1\oplus G_0 \arrow{l}[swap]{\begin{pmatrix}\partial_1 & \alpha_0\end{pmatrix}} \&[15pt] C_2\oplus G_1\oplus R_0 \arrow{l}[swap]{\begin{pmatrix}\partial_2 & \alpha_1 & 0 \\ 0 & f^0_1 & p^1_0 \end{pmatrix}} \&[15pt] C_3\oplus G_2\oplus R_1 \arrow{l}[swap]{\begin{pmatrix}\partial_3 & \alpha_2 & 0 \\ 0 & f^0_2 & p^1_1 \\ 0 & h^0_2 & f^1_1\end{pmatrix}} \&[-12pt] \cdots \arrow[l]
\end{tikzcd}}
\end{equation}
which is acyclic, if every cycle $(x,y,z)\in C_{i+1}\oplus G_i \oplus R_i$ is a boundary. Such a cycle fulfills
\begin{align}
\label{eq:boundary_path}
\begin{pmatrix}\partial_{i+1} & \alpha_{i} & 0 \\ 0 & f^0_{i} & p^1_{i-1} \\ 0 & h^0_{i} & f^1_{i-1}\end{pmatrix}\begin{pmatrix}x\\y\\z\end{pmatrix}=\begin{pmatrix}\partial_{i+1}(x)+\alpha_i(y)\\ f^0_i(y)+p^1_{i- 1}(z)\\h^0_i(y)+f^1_{i- 1}(z)\end{pmatrix}=0 .
\end{align}
\noindent
Since $\alpha_{i+1}$ is an epimorphism, there exists $a\in G_{i+1}$ such that $\alpha_{i+1}(a)=x$. To get a boundary, another summand $b\in R_i$ satisfying 
\begin{equation*}
    f^0_{i+1}(a)+p^1_i(b)=y
\end{equation*}
\noindent
is necessary. Note that $\partial_{i+1}\circ \alpha_{i+1}(a)=\alpha_i\circ f^0_{i+1} (a)$ by Diagram \ref{diag:app_path_double}. Hence by Equation \eqref{eq:boundary_path},
\begin{equation*}
    \alpha_i\circ f^0_{i+1}(a)+\alpha_i(y)=\alpha_i (f^0_{i+1}(a)+y) = 0
\end{equation*}
and thus $f^0_{i+1}(a)+y\in \ker \alpha_i = \im p^1_i$ by exactness of row $i$ in Diagram \eqref{diag:app_path_double}. Thus there exists a $b\in R_i$ such that
\begin{equation*}
    p^1_i(b)+f^0_{i+1}(a)=y.
\end{equation*}
It remains to show that 
\begin{equation}
    \label{eq:helper_pathiso}
    h^0_{i+1}(a)+f^1_i(b)=z
\end{equation}
in order to verify that $(0,a,b)\in C_{i+2}\oplus G_{i+1}\oplus R_{i}$ maps to $(x,y,z)$ by the boundary operator in \eqref{eq:cone}. Applying $p^1_{i-1}$ to Equation \eqref{eq:helper_pathiso} yields
\begin{equation*}
    p^1_{i-1}(h^0_{i+1}(a)+f^1_i(b)+z)=f^0_i(f^0_{i+1}(a)+p^1_i(b))+p^1_{i-1}(z) =                       f^0_i(y)+p^1_{i-1}(z) =0
\end{equation*}
where the first equality follows by the commutativity of Diagram \eqref{diag:app_path_double} and the third one by Equation \eqref{eq:boundary_path}. Equation \eqref{eq:helper_pathiso} now holds since $p^1_{i-1}$ is a monomorphism. This finishes the proof of Theorem \ref{thm:path_correctness}. 

\subparagraph{An algebraic remark.}

We have constructed all morphisms in Diagram \eqref{diag:app_path_double} explicitly in Section \ref{sec:path}. Their existence and properties only are guaranteed by the following Lemma \ref{lem:fundamental_lemma} from homological algebra, and only require the existence of free resolutions of length $1$ of each $C_i$.
\begin{lemma}
    \label{lem:fundamental_lemma}
    \begin{enumerate}[label=(\roman*)]
        \item For any morphism $f:M\rightarrow N$ of bipersistence modules, $F_\bullet\xrightarrow{\epsilon_M} M$ and $G_\bullet \xrightarrow{\epsilon_N} N$ free resolutions there exists a \define{lift} $L(f):F_\bullet \rightarrow G_\bullet$ such that 
        \begin{equation*}
            \begin{tikzcd}
M \arrow[d, "f"'] & F_0 \arrow[l, "\epsilon_M"'] \arrow[d, "L(f)_0"] \\
N                 & G_0 \arrow[l, "\epsilon_N"]                     
\end{tikzcd}
        \end{equation*}
        commutes. 
        \item Any two lifts $L(f)$ and $L'(f)$ are homotopic. This means that there exists a \define{chain homotopy}, which is a collection of morphisms $(h_i:F_i\rightarrow G_{i+1})_{i\in \mathbb{N}}$ such that 
        \begin{equation*}
        L(f)_i-L'(f)_i=\partial^G_{i+1}\circ h_i+h_{i-1}\circ \partial^F_i \qquad \text{for all }i\geq 0.
        \end{equation*}
    \end{enumerate} 
\end{lemma}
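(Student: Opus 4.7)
The plan is to prove both parts by induction on homological degree, using the defining property of free bipersistence modules that a linear map out of a free module is uniquely determined by its values on basis elements, together with the fact that a surjection of bipersistence modules is surjective at each individual grade.

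For part (i), I would construct $L(f)_0, L(f)_1, \dots$ inductively. For the base case, given a basis element $b \in F_0$ with grade $x_b$, the element $f(\epsilon_M(b))$ lies in $N_{x_b}$. Since $\epsilon_N$ is surjective in every grade (as $G_\bullet$ resolves $N$), I pick any preimage in $(G_0)_{x_b}$ and declare this to be $L(f)_0(b)$; extending linearly defines the entire morphism. For the inductive step, given $L(f)_0, \dots, L(f)_{i-1}$ making the relevant squares commute, and given a basis element $b \in F_i$ at grade $x_b$, the candidate image must satisfy $\partial^G_i \circ L(f)_i(b) = L(f)_{i-1}(\partial^F_i(b))$. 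I first verify that $L(f)_{i-1}(\partial^F_i(b))$ lies in $\ker \partial^G_{i-1}$ (or in $\ker \epsilon_N$ when $i=1$), which follows from a short diagram chase using the induction hypothesis and the chain complex property of $F_\bullet$. By exactness of $G_\bullet$ at grade $x_b$, such an element lies in the image of $\partial^G_i$ in grade $x_b$, so I choose a preimage and define $L(f)_i(b)$ accordingly.

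For part (ii), I set $d_i := L(f)_i - L'(f)_i$; this defines a chain map lifting the zero map $M \to N$, and the goal is to produce morphisms $h_i \colon F_i \to G_{i+1}$ with $d_i = \partial^G_{i+1} \circ h_i + h_{i-1} \circ \partial^F_i$, where by convention $h_{-1} := 0$. I construct the $h_i$ inductively. For $h_0$, on a basis element $b$ with grade $x_b$, I need a preimage of $d_0(b)$ under $\partial^G_1$; this is possible because $\epsilon_N(d_0(b)) = (f-f)(\epsilon_M(b)) = 0$, so by exactness $d_0(b) \in \im \partial^G_1$ at grade $x_b$. For the inductive step, assuming $h_0, \dots, h_{i-1}$ have been built, the element $d_i(b) - h_{i-1}(\partial^F_i(b))$ must be shown to lie in $\ker \partial^G_i$; applying $\partial^G_i$ and substituting $d_{i-1} = \partial^G_i \circ h_{i-1} + h_{i-2} \circ \partial^F_{i-1}$ (the inductive hypothesis) together with $\partial^F_{i-1} \circ \partial^F_i = 0$ gives the required vanishing. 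Exactness of $G_\bullet$ at the appropriate grade again yields a preimage under $\partial^G_{i+1}$, which defines $h_i(b)$.

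The main obstacle, and the only place where the argument differs from the classical ungraded statement, is keeping track of grades. I must ensure that each lift chosen for a basis element $b$ of grade $x_b$ lives in grade $x_b$, not merely somewhere in the free module. This is resolved by the observation that for free bipersistence modules the boundary maps $\partial^G_{i+1}$ restrict to surjections from $(G_{i+1})_{x_b}$ onto $(\ker \partial^G_i)_{x_b}$, so preimages may be chosen in the correct grade. Once grades are handled coherently, the rest of the argument is the standard diagram chase, and the constructed $L(f)$ and chain homotopy $h$ extend to honest morphisms of bipersistence modules by linearity on the bases.
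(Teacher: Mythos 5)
The paper provides no proof of this lemma: it is stated under ``An algebraic remark'' in Appendix~A.1 and explicitly introduced as a result ``from homological algebra,'' with the proof left to standard references (just as the paper cites Rotman for the mapping-cone criterion used in the quasi-isomorphism proofs). Your argument is therefore not comparable to a paper-internal proof, but judged on its own it is correct. It is the standard Comparison Theorem argument, carried out by induction on homological degree: for part~(i) you lift one basis element at a time, checking that $L(f)_{i-1}(\partial^F_i(b))$ lands in $\ker\partial^G_{i-1}$ (resp.\ $\ker\epsilon_N$ for $i=1$) via the inductive commutativity and $\partial^F\circ\partial^F=0$, then pulling back through $\partial^G_i$ by exactness of $G_\bullet$; for part~(ii) you apply the same induction to the chain map $d_i=L(f)_i-L'(f)_i$, verifying that $d_i(b)-h_{i-1}(\partial^F_i(b))$ is a cycle before pulling it back through $\partial^G_{i+1}$. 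The one genuinely graded ingredient---that preimages of a grade-$x_b$ element can be taken inside $(G_{i+1})_{x_b}$, because exactness of a complex of bipersistence modules is exactness at every grade, and that a morphism out of a free bipersistence module is freely prescribed by choosing images of basis elements at their respective grades---is precisely the issue you flag and you dispose of it correctly. A tiny stylistic note: the conclusion of part~(i) is a chain map $L(f)$ (all squares commute), not merely a map $F_0\to G_0$ making the displayed square commute; your inductive construction produces exactly this, so no change is needed, but it is worth stating explicitly since part~(ii) uses that both $L(f)$ and $L'(f)$ are full chain maps to conclude that $d$ is one.
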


Indeed, the morphisms $f^0_i:G_i\rightarrow G_{i-1}$ and $f^1_i:R_i\rightarrow R_{i-1}$ are lifts of the boundary maps $\partial_i$ according to Lemma \ref{lem:fundamental_lemma} and they further assemble to a chain map $f^\bullet_i=(f^0_i,f^1_i)$ between the chain complexes $G_i\xleftarrow{p^1_i}R_i$ and $G_{i-1}\xleftarrow{p^1_{i-1}} R_{i-1}$. The composition $f^\bullet_{i-1}\circ f^\bullet_i$ is then a chain map between $G_i\xleftarrow{p^1_i}R_i$ and $G_{i-2}\xleftarrow{p^1_{i-2}}R_{i-2}$ and more specifically a lift of $\partial_{i-1}\circ\partial_i=0$. By Lemma \ref{lem:fundamental_lemma}, $f^\bullet_{i-1}\circ f^\bullet_i$ any two lifts are unique up to homotopy. As the zero map also lifts $\partial_{i-1}\circ \partial_i$, the map $f^\bullet_{i-1}\circ f^\bullet_i$ is homotopic to zero. Thus, we identify this homotopy with its only constituting map $h^0_i$. 

\subsection{Complexity}
\label{app:path_complexity}

\subparagraph{Multiplying sparse matrices.} Given two matrices $A\in\mathbb{Z}_2^{n\times p}$ and $B\in\mathbb{Z}_2^{p\times m}$ stored in sparse matrix format, i.e.\ as a list of columns represented by the row-indices of non-zero entries. Assume that the columns of $A$ and $B$ have length at most $l$ and $q$, respectively. We compute the $i$-th column of $A\cdot B=(AB_1,\ldots,AB_m)$ by summing the columns of $A$ indexed by the $i$-th column of $B$. This can be done by creating an accumulator array with zero entries of size $n$ representing one column of $A\cdot B$. We can then compute the sum of the columns of $A$ indexed by $B_i$ by accumulating the non-zero entries in this array via bit flips. This can be done by going over all columns of $A$ indexed by $B_i$ once while remembering which bits are touched. After the column $AB_i$ is finished we can clear the array. This can be done in $O(lq)$ time. Thus overall we can compute the product in $O(mql)$ time with an additional overhead of $O(n)$ for creating the accumulator array.


\begin{restatable}{proposition}{pathcorrectness}
\label{prop:path_complexity}
Given a chain complex $C_\bullet$ induced by a $k$-critical bifiltration of size $n$. The time complexity of the path algorithm is linear in the description size of its output, being $O(nk)$ in the worst case. 
\end{restatable}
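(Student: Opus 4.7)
\medskip

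\noindent\textbf{Proof plan.} The strategy is to bound the output description size and then show that every step of the algorithm either scans the input in time proportional to $n$ or writes the output matrices in time proportional to the number of nonzero entries it produces.

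First, I would bound the output size. The output consists of the matrices $[p^1_i]$, $[f^0_i]$, $[f^1_i]$, and $[h^0_i]$, together with the lists of generators and relations. Because each simplex $\sigma$ contributes $|\mathcal{G}(\sigma)|\le k$ generators and $|\mathcal{G}(\sigma)|-1$ relations, and the total number of generators over all simplices equals $n$ by definition of the description size, the bases of $G_\bullet$ and $R_\bullet$ both have $O(n)$ elements. The matrix $[p^1_i]$ has exactly two entries per column, and since the simplicial dimension $d$ is fixed, $[f^0_i]$ has $O(1)$ entries per column; together they contribute $O(n)$ entries. Each column of $[f^1_i]$ (indexed by a relation $r^y_\sigma$) is a sum, over the $O(1)$ facets $\tau$ of $\sigma$, of a path in the path graph $\mathcal{P}_\tau$, which has at most $k-1$ edges; the same bound holds column-wise for $[h^0_i]$ via the codimension-$2$ faces. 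Hence both matrices have at most $O(k)$ nonzero entries per column, and therefore $O(nk)$ entries in total in the worst case.

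Next, I would analyze the three steps of the algorithm. Step~$1$ is a single pass through the input that instantiates generators and relations and writes the two entries of each column of $[p^1_i]$; this costs $O(n)$ time. For Step~$2$, writing each column of $[f^0_i]$ requires, for each of the $O(1)$ facets $\tau$ of a simplex $\sigma$, locating a generator of $\tau$ whose grade is $\le x$; since the lists $\mathcal{G}(\tau)$ are lexicographically pre-sorted and form antichains, a binary search suffices, giving $O(\log k)$ time per generator and $O(n\log k)$ total. This step writes $O(n)$ entries, and the small logarithmic overhead does not affect the final bound.

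The main step to analyze is Step~$3$, where I expect the linearity argument to require the most care. Computing each column of $[f^0_i]\cdot[p^1_i]$ takes $O(1)$ time because both factors have $O(1)$ entries per column; this yields, per relation $r^y_\sigma$, a constant number of pairs $g^{z_j}_\tau, g^{z_\ell}_\tau$ in the path graphs $\mathcal{P}_\tau$ of the facets. The key point is that, by Lemma~\ref{lem:path-lemma}, the path $c_\tau=r_\tau^{w_j}+\cdots+r_\tau^{w_{\ell-1}}$ connecting such a pair is unique, and it can be written down in time proportional to its length by simply enumerating the edges of $\mathcal{P}_\tau$ between the indices $j$ and $\ell$. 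Therefore the time to produce the column equals, up to a constant factor, the number of nonzero entries it contributes to $[f^1_i]$. Summing over all columns shows that Step~$3$ runs in time $O(n) + O(\|[f^1_i]\|_0)$, where $\|\cdot\|_0$ denotes the number of nonzero entries. The same argument applies verbatim to $[h^0_i]$, replacing facets by codimension-$2$ faces.

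Combining the three steps, the total running time is $O(n\log k) + O(\|[f^1_i]\|_0 + \|[h^0_i]\|_0)$, which is linear in the output description size. Inserting the worst-case bound $\|[f^1_i]\|_0, \|[h^0_i]\|_0 = O(nk)$ established in the first paragraph yields the stated $O(nk)$ bound. The main subtlety is verifying that the path-construction work in Step~$3$ can indeed be charged one-to-one to output entries; this relies on the fact that the paths in Lemma~\ref{lem:path-lemma} can be enumerated in time proportional to their length, which follows from the trivial combinatorial structure of a path graph.
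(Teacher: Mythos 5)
Your proof is correct and follows essentially the same approach as the paper's: bound $[p^1_i]$ and $[f^0_i]$ by $O(n)$, compute the sparse products $[f^0_{i-1}][f^0_i]$ and $[f^0_i][p^1_i]$ in time linear in the input (since $d$ is constant), and observe that connecting each pair of generators of a simplex $\tau$ costs at most $O(k)$ by enumerating the unique path in $\mathcal{P}_\tau$, giving $O(nk)$ total for the $[f^1_i]$ and $[h^0_i]$ matrices. The only superficial difference is framing: you charge the work one-to-one against output entries from the start, while the paper computes a per-step bound of $O(nd^2k)$ and then drops the constant $d$; these are the same argument.
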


\begin{proof}
The input is induced by a $k$-critical bifiltration $\mathcal{K}$ of constant dimension $d$ with description size $n$, that is, the size of the input $C_\bullet$ is the cardinality of $\bigsqcup_{\sigma\in \mathcal{K}}\mathcal{G}(\sigma)$. We assume that for each simplex $\sigma\in\mathcal{K}$, $\mathcal{G}(\sigma):=n^\sigma\leq k$. 
The size of the output is determined by all non-zero entries of the matrices $[f^0_i],[p^1_i],[f^1_i]$ and $[h^0_i]$. 

Step $1$ involves a simple iteration through all grades $\mathcal{G}(\sigma)$ and can thus be computed in $O(n)$ time. Note that all $[p^1_i]$ have size $O(n)$.

In Step $2$, we need to find generators $g_{\tau_0}^{x_0},\ldots,g_{\tau_\ell}^{x_\ell}$ for each generator $g^x_\sigma$ such that $x_j\leq x$. Here $\tau_j\in \partial\;\sigma$. Since there are at most $k$ generators for each $\tau_j$ and the dimension is at most $d$, this can be done in $O(nd\log k)$ using binary search. All $[f^0_i]$ have size $O(nd)$. 

In Step $3$, we first compute the matrix products $f^0_{i-1}\circ f^0_i$ and $f^0_i\circ p^1_i$. By construction, each column of $f_i^0$ has exactly $i+1\leq d+1$ non-zero entries, while each column of $p_i^1$ has exactly two non-zero entries. Because each involved matrix has $O(n)$ columns, these products of sparse matrices can be computed in $O(nd^2)$ time. The $O(d^2)$ non-zero entries of the columns of $f^0_{i-1}\circ f^0_i$ and $f^0_i\circ p^1_i$ consist of pairs of generators, which get connected by paths. Each such path can be found in $O(k)$ time, which results in the $[h_i^0]$ and $[f^1_{i}]$ matrices to have columns of size
$O(d^2k)$, and all $[f^1_i]$ and $[h^0_i]$ having size $O(nd^2k)$ in total. This gives an overall size and time complexity of $O(nd^2k)=O(nk)$.
\end{proof} 

\begin{figure}
  \centering
  \includegraphics[width=13.5cm]{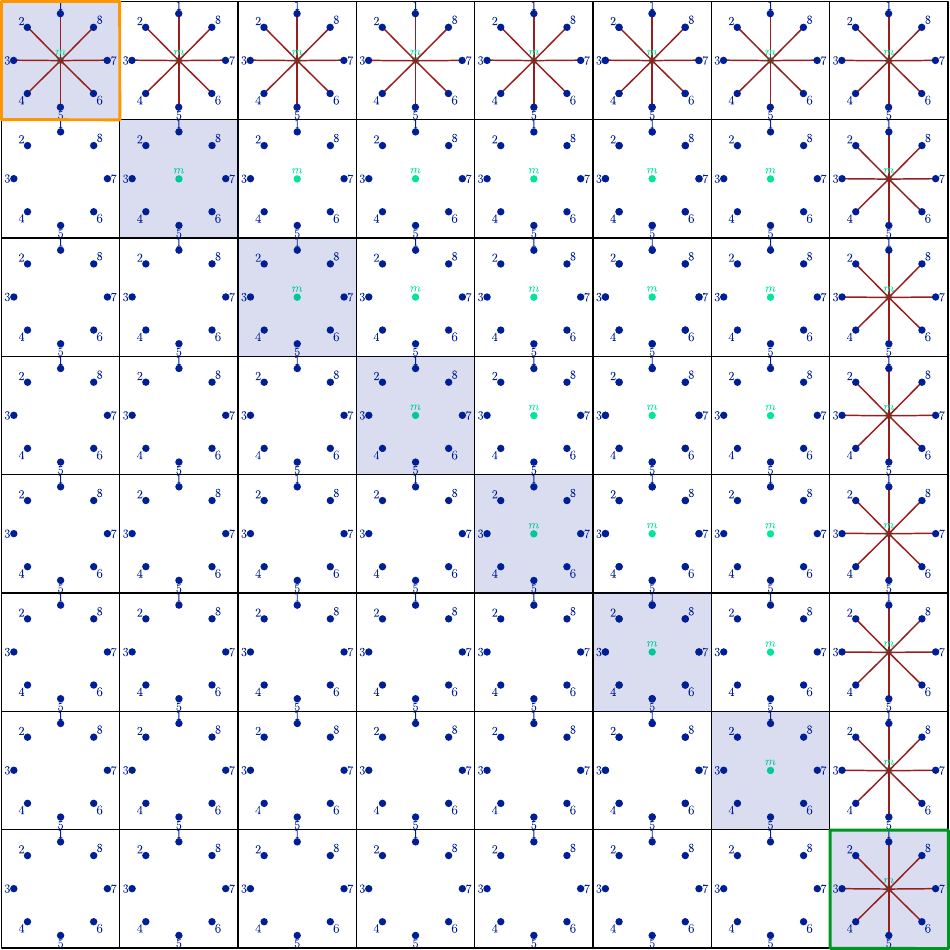}
  \caption{The \define{star}-bifiltration makes the matrix $[f^1_1]$ dense. It consists of $\ell+1$ $0$-simplices and $\ell$ $1$-simplices. The vertex $m$ is $\ell$-critical, entering the bifiltration along the blue staircase, while each $1$-simplex is $2$-critical, coming in at the orange and green grade. The map $f^0_1 \circ p_1^1$ maps each relation $r_e$ of an edge $e$ to a generator of $m$ with yellow grade and a generator of $m$ with green grade. These generators of $m$ are connected by a path of length $\ell-1$ which constitute the column of $[f^1_1]$ indexed by $r_e$.  
  }
   \label{fig:star}
\end{figure}

\subsection{Minimal resolutions of non-minimal description size} \label{app:wheel_example}

We consider the wheel example in Figure \ref{fig:wheels}. It consists of an outer cycle of $\ell$ $1$-critical vertices and edges, a central $\ell$-critical vertex $m$, $\ell$ evenly and oddly labeled $1$-critical edges that connect the vertices on the cycle to $m$ and $\ell$ $1$-critical faces. Because the only multi-critical simplex is the vertex $m$, it is the only one that induces relations. Thus, we obtain the following diagram:
\begin{equation}  \label{eq:wheel_diag}
\begin{tikzcd}
& 0 \arrow[d]  & 0 \arrow[d]  & 0 \arrow[d] \\
0 & C_2 \arrow[d,swap,"\partial_2"] \arrow[l] & G_2 \arrow[d,swap,"f^0_2"] \arrow[l] \arrow[ddr,"h_2^0"] & 0 \arrow[d] \arrow[l] & 0 \arrow[l]  \\
0 & C_1 \arrow[d,swap,"\partial_1"] \arrow[l] & G_1 \arrow[d,swap,"f^0_1"] \arrow[l] & 0 \arrow[d] \arrow[l] & 0 \arrow[l]  \\
0 & C_0 \arrow[d] \arrow[l] & G_0 \arrow[d] \arrow[l] & R_0 \arrow[d] \arrow[l,swap,"p_0^1"] & 0 \arrow[l]  \\
& 0 & 0 & 0   \\
\end{tikzcd}
\end{equation}
The map $p_0^1$ just sends each relation of $m$ to its generators. The maps $f_1^0$ and $f_2^0$ are constructed by mapping each edge and face generator to a vertex and edge generator of its boundary, respectively. The only generators where there could be choices are the generators of the edges that have $m$ in its boundary. But the edges and copies of $m$ are positioned in a way such that each edge can only be mapped to a single generator of $m$. Hence, there is no choice in the construction of these maps and the even and odd edges are mapped to the endpoints of the path formed by the generators and relations of $m$. Because each face has an even and an odd edge in its boundary and the even and odd edges are mapped to the generators corresponding to these endpoints, $f_1^0\circ f_2^0(g_\sigma^x)=g_m^y+g_m^z$ for each face $\sigma$. This implies that $h_2^0(g_\sigma^x)$ has to be defined as the path of relations of length $\ell$ connecting $g_m^y$ and $g_m^z$ for each of the $\ell-1$ faces. The output of the algorithm is the chain complex
\begin{equation} \label{eq:output_wheel}
\begin{tikzcd}[ampersand replacement=\&]
0 \& G_0  \arrow[l]
  \& G_1\oplus R_0 \arrow{l}[swap]{\begin{pmatrix}f_1^0 & p_0^1 \end{pmatrix}} 
  \& G_2 \arrow{l}[swap]{\begin{pmatrix}f_2^0 \\ h^0_2 \end{pmatrix}} 
  \& 0 \arrow[l] .
\end{tikzcd}
\end{equation}
We now modify the wheel example in Figure \ref{fig:wheels}, by slightly shifting all the edges such that every edge strictly comes after every vertex, all the edges and relations enter in incomparable grades and all the faces enter in incomparable grades without changing the relative comparability relations with the remaining simplices. This means that if two simplices (or copies thereof) are incomparable before this shift they are still incomparable after the shift. It is obvious that this can be done. After this modification every non-zero entry in the boundary matrices of \eqref{eq:output_wheel}, corresponds to a basis element $b^x$ getting mapped to a basis element $b^y$ such that $y<x$. This implies that the output chain complex is a minimal free resolution of $C_\bullet$ (see Definition 1.24 in \cite{miller_sturmfels_book}). Moreover, all basis elements corresponding to edges and relations or faces are incomparable. Therefore, the matrix $\begin{pmatrix}f_2^0 \\ h^0_2 \end{pmatrix}$ has $O(\ell^2)$ non-zero entries and there is no possible basis transformation on $G_2$ or $G_1\oplus R_0$ to reduce them. We conclude that the path-algorithm constructs a minimal free resolution from the modified wheel example that has a description size of $O(\ell^2)$. Compare this with Theorem \ref{thm:logpath_theorem} stating that it admits a free resolution of description size $O(\ell\text{ log}^2\ell)$ as produced by the log-path algorithm.  

\section{Proofs and details for the log-path algorithm} \label{app:logpath}

\subsection{Finding shortest paths} \label{app:shortest_paths}

\ignore{
\begin{figure}[t]
\adjustbox{width=\textwidth}{
\begin{tikzcd}[column sep=small]
\bullet \arrow[r,dash,shorten=-5pt] \arrow[rr,dash,shorten=-5pt,bend left] \arrow[rrrr,dash,shorten=-5pt,bend left] \arrow[rrrrrrrr,dash,shorten=-5pt,bend left] \arrow[rrrrrrrrrrrrrrrr,dash,shorten=-5pt,bend left] & \bullet \arrow[r,dash,shorten=-5pt] & \bullet \arrow[r,dash,shorten=-5pt] \arrow[rr,dash,shorten=-5pt,bend left] & \bullet \arrow[r,dash,shorten=-5pt] & \bullet \arrow[r,dash,shorten=-5pt] \arrow[rr,dash,shorten=-5pt,bend left] \arrow[rrrr,dash,shorten=-5pt,bend left] & \bullet \arrow[r,dash,shorten=-5pt] & \bullet \arrow[r,dash,shorten=-5pt] \arrow[rr,dash,shorten=-5pt,bend left] & \bullet \arrow[r,dash,shorten=-5pt] & \bullet \arrow[r,dash,shorten=-5pt] \arrow[rr,dash,shorten=-5pt,bend left] \arrow[rrrr,dash,shorten=-5pt,bend left] \arrow[rrrrrrrr,dash,shorten=-5pt,bend left] & \bullet \arrow[r,dash,shorten=-5pt] & \bullet \arrow[r,dash,shorten=-5pt] \arrow[rr,dash,shorten=-5pt,bend left] & \bullet \arrow[r,dash,shorten=-5pt] & \bullet \arrow[r,dash,shorten=-5pt] \arrow[rr,dash,shorten=-5pt,bend left] \arrow[rrrr,dash,shorten=-5pt,bend left] & \bullet \arrow[r,dash,shorten=-5pt] & \bullet \arrow[r,dash,shorten=-5pt] \arrow[rr,dash,shorten=-5pt,bend left] & \bullet \arrow[r,dash,shorten=-5pt] & \bullet \\[-20pt]
0 & 1 & 2 & 3 & 4 & 5 & 6 & 7 & 8 & 9 & 10 & 11 & 12 & 13 & 14 & 15 & 16
\end{tikzcd}}
\caption{The log-path graph for $k=4$.}
\label{fig:tree}
\end{figure}
}

\begin{figure}[t]
    \centering
    \includegraphics[scale=0.5]{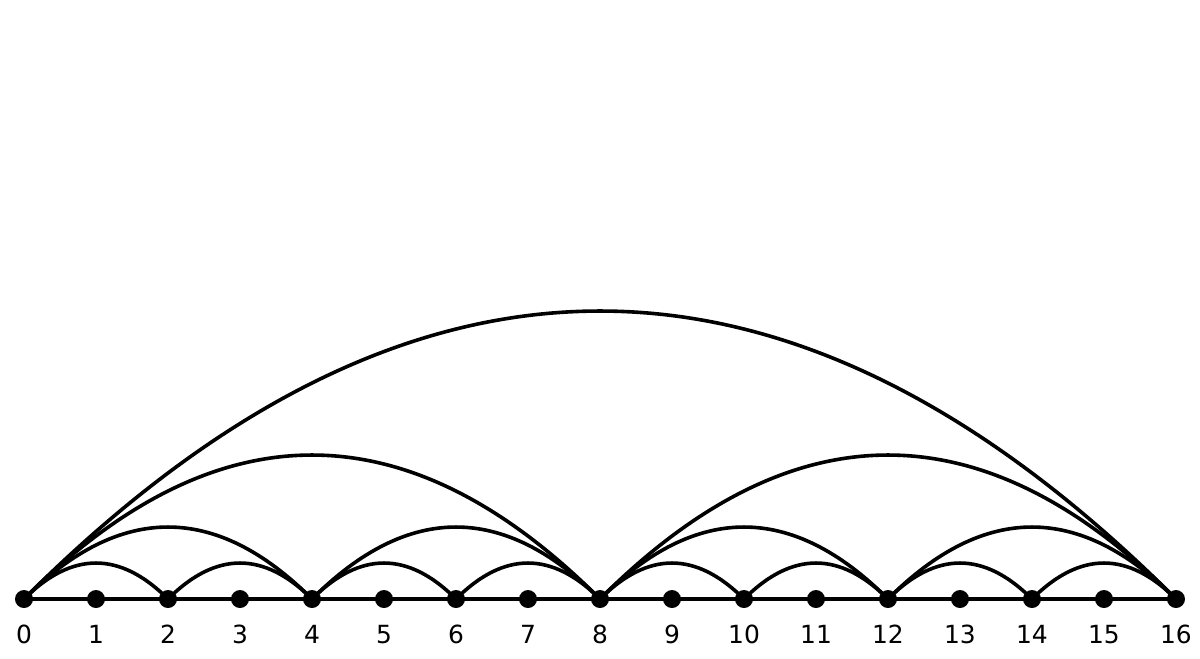}
    \caption{The log-path graph for $t=4$.}
    \label{fig:tree}
\end{figure}

In this section, we prove:
\logpathlemma*

To this end, we treat the $1$-skeleton of the log-path resolution $\mathcal{L}_\sigma$ on a purely graph-theoretic level. To do so, we identify all generators as vertices labeled by numbers and define the construction as follows: We start with a path graph of length $m=2^t$ and introduce $m-1$ additional edges, functioning as shortcuts, such that any two vertices can be connected by a monotone path of length $O\big(\text{log}(m)\big)=O(t)$.
As illustrated in Figure \ref{fig:tree}, we first add shortcuts of length two for every other vertex. Then we add shortcuts of length four at every fourth vertex, shortcuts of length eight at every eighth vertex and so on. If $m$ is not a power of two, we only add those edges that do not overshoot the last vertex of the path. Formally, for $t\in \mathbb{N}_0$, define the (undirected) graph $L=(V,E)$, where
\begin{equation*}
\begin{aligned}
V&=\{0,\ldots,2^t\} \\ 
E&=\{(x,y)\vert\exists r\in\mathbb{N}_0\colon 2^r\vert x\text{ and } y=x+2^r\}.
\end{aligned}
\end{equation*}
Note that $\vert V\vert=2^t+1$ and $\vert E\vert=\sum_{i=0}^t \frac{2^t}{2^i}=2^t \sum_{i=0}^t 2^{-i}=2^t(2-2^{-t})=2^{t+1}-1$.

Let $x,y\in L$ such that $x<y$, we call a path $x=z_0,z_1,\ldots,z_l=y$ in $L$ from $x$ to $y$ monotone if $z_i<z_{i+1}$ for all $0\leq i<l$. We can construct a shortest monotone path between $x$ and $y$ in the following way.

\textbf{Shortest monotone path algorithm:} Start at $x=z_0$ and choose a maximal $r_0$ with the property that $2^{r_0}\vert z_0$ and $z_0+2^{r_0}\leq z_l$. Set $z_1\coloneqq z_0+2^{r_0}$. In other words, take the biggest possible step towards $z_l$ that does not overshoot it. Then repeat this step by setting $z_{i+1}\coloneqq z_i+2^{r_i}$ with $r_i$ maximal such that $2^{r_i}\vert z_i$ and $z_i+2^{r_i}\leq z_l$ until $z_l=y$ is reached. 

The proof of Lemma \ref{lem:path-lemma} now follows from the following Lemma.

\begin{lemma} \label{prop:shortest_path}
Given $x<y\in L$, the algorithm above computes the unique shortest monotone path $x=z_0,\ldots,z_l=y$ of length $O(t)$ in $O(t)$ time.
\end{lemma}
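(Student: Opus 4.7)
I would prove the lemma in four stages: well-definedness and termination, the unimodal structure of the step exponents, optimality and uniqueness, and the runtime.

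\textbf{Well-definedness and termination.} For any $z_i < y$, the exponent $r = 0$ is always admissible: $2^0 \mid z_i$ and $z_i + 1 \leq y$. Hence the admissible set for $r_i$ is nonempty and bounded above by $t$, so the maximum exists and $z_{i+1}$ is well-defined. The sequence $(z_i)$ is strictly increasing within $\{0, \ldots, 2^t\}$, so the algorithm terminates with $z_l = y$.

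\textbf{Unimodality of the step exponents.} Let $\nu(z)$ denote the largest exponent $r$ with $2^r \mid z$. At step $i$, exactly one of two constraints is binding: \emph{(a)} divisibility, meaning $r_i = \nu(z_i)$; or \emph{(b)} overshoot, meaning $r_i < \nu(z_i)$ and $z_i + 2^{r_i+1} > y$. In case (a), $z_i/2^{r_i}$ is odd, so $\nu(z_{i+1}) > r_i$, and if the next step is again in case (a) then $r_{i+1} > r_i$. In case (b), $\nu(z_{i+1}) = r_i$, and $r_{i+1} = r_i$ would give $z_i + 2^{r_i+1} \leq y$, contradicting (b); so $r_{i+1} < r_i$. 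Moreover, once in case (b) we stay there, since $\nu(z_{i+1}) = r_i > r_{i+1}$ keeps the overshoot binding. Thus $(r_i)$ is strictly increasing on an initial segment and strictly decreasing thereafter, so each value in $\{0, \ldots, t\}$ occurs at most twice and $l \leq 2(t+1) = O(t)$.

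\textbf{Shortest path and uniqueness.} I would prove by induction on $l = d(x, y)$ that every monotone path $P = (x, w_1, \ldots, w_m = y)$ satisfies $m \geq l$, with equality iff $P$ is the greedy path. If $w_1 = z_1$, apply the induction hypothesis to the subpath from $z_1$ to $y$. The essential case is $w_1 < z_1$, where the key subclaim is $d(w_1, y) \geq d(x, y)$; combined with $m \geq 1 + d(w_1, y)$ it shows $P$ is strictly longer than the greedy path. To establish the subclaim I would use an exchange argument: let $w_j$ be the first vertex of $P$ with $w_j \geq z_1$; then the prefix of $P$ up to $w_j$ can be replaced by the greedy edge $(x, z_1)$ followed by a shortest monotone path from $z_1$ to $w_j$, and a case analysis on whether the greedy step from $x$ is of type (a) or (b) yields $1 + d(z_1, w_j) \leq j$. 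Establishing this subclaim is the main obstacle in the proof.

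\textbf{Runtime.} Each iteration computes $r_i = \min\{\nu(z_i),\, \lfloor \log_2(y - z_i) \rfloor\}$ in $O(1)$ time using standard bit operations, so the total running time is $O(l) = O(t)$.
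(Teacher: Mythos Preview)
Your treatment of termination, the unimodal shape of the exponents, and the runtime is essentially the same as the paper's (the unimodality statement is slightly imprecise---the sequence $r_i$ can plateau for one step at the peak, as in $x=1$, $y=7$ giving $0,1,1,0$---but your conclusion that each value occurs at most twice still holds).

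The genuine gap is in the optimality and uniqueness stage. You isolate the right subclaim, $d(w_1,y)\geq d(x,y)$ for $w_1<z_1$, but you do not prove it; the ``case analysis on whether the greedy step is of type (a) or (b)'' is only gestured at, and you yourself flag it as the main obstacle. The paper bypasses this entirely via the no-crossing property (Lemma~\ref{prop:crossing}): since $(x,z_1)$ is an edge of $L$, every edge emanating rightward from a vertex strictly between $x$ and $z_1$ must end at or before $z_1$. Hence \emph{every} monotone path from $x$ to $y$ is forced to pass through $z_1$, so replacing its prefix up to $z_1$ by the single edge $(x,z_1)$ never lengthens it; this gives both optimality and uniqueness of the first step in one stroke, and induction finishes the argument. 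The same lemma would also dispatch your subclaim directly (any monotone path from $w_1$ to $y$ visits $z_1$, so $d(w_1,y)\geq 1+d(z_1,y)=d(x,y)$), but the paper's formulation avoids the detour through the exchange argument altogether.
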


The following technical lemma is not only essential for the proof of Lemma \ref{prop:shortest_path}, it also guarantees the planarity of $L$. 
Indeed, we can always draw $L$ as in Figure \ref{fig:tree}. If $(x,z)$ is an edge in $L$ and $(y,w)$ an edge such that $x<y<z$, then $x<y<w<z$. In words, all edges starting below the edge $(x,z)$ stay below $(x,z)$. In particular, $L$ is a planar graph.

\begin{lemma} \label{prop:crossing}
If $(x,z)$ is an edge in $L$ and $x<y<z\in L$ such that $2^r\vert y$, then $y+2^r\leq z$.  
\end{lemma}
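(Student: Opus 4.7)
My plan is to exploit the arithmetic structure of the edge $(x,z)$ directly. By definition of $E$, there exists some $s \geq 0$ with $2^s \mid x$ and $z = x + 2^s$. The argument then reduces to a short divisibility computation that pins down how close a multiple of $2^r$ can get to $z$ without exceeding it.

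First I would observe that since $x < y < z = x + 2^s$, we have $0 < y - x < 2^s$. I claim that necessarily $r < s$: indeed, if $r \geq s$, then $2^s$ divides both $y$ and $x$, and hence divides $y - x$; but $y - x$ is a positive integer strictly less than $2^s$, a contradiction. So $r \leq s - 1$, and in particular $2^r$ also divides $x$ (since $2^r \mid 2^s \mid x$).

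Next I would use that $y - x$ is a positive multiple of $2^r$ strictly less than $2^s$. The largest such multiple is $(2^{s-r} - 1) \cdot 2^r = 2^s - 2^r$, so $y - x \leq 2^s - 2^r$. Rearranging gives $y + 2^r \leq x + 2^s = z$, which is the desired inequality.

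There is no real obstacle here; the lemma is essentially a two-line consequence of writing $z - x$ as a power of two and tracking the $2$-adic valuation of $y - x$. I would keep the exposition concise since the same divisibility logic is reused in the proof of Lemma~\ref{prop:shortest_path} (to argue that the greedy step never overshoots).
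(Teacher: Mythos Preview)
Your proof is correct and follows essentially the same approach as the paper's: both write $z = x + 2^s$ (the paper uses $q$), rule out $r \geq s$ by the divisibility contradiction on $y-x$, and then bound $y-x$ as a multiple of $2^r$ strictly below $2^s$ to conclude $y + 2^r \leq z$. The only cosmetic difference is that you bound $y-x \leq 2^s - 2^r$ directly, while the paper writes $y-x = 2^r b$ with $b+1 \leq 2^{s-r}$.
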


\begin{proof}
If $(x,z)$ is an edge in $L$, there exists $q$ such that $2^q\vert x$ and $z=x+2^q$. Since $x<y<z$, we have $0<y-x<z-x=2^q$. If $r\geq q$, then $2^q\vert y$ and, thus, $2^q\vert(y-x)$. But this would imply $y-x=2^qa$ with $a\geq 1$ and $2^qa<2^q$, which is a contradiction. Hence, we have $r<q$ which implies $2^r\vert x$ and $2^r\vert(y-x)$. This allows us to write $y-x=2^rb$ with an integer $b\geq 1$. Since $y-x<z-x$, we get $2^rb<2^q$ and $b<2^{q-r}$ or $b+1\leq 2^{q-r}$. Therefore,
\begin{equation*}
y+2^r=x+(y-x)+2^r=x+2^rb+2^r=x+2^r(b+1)\leq x+2^r2^{q-r}=x+2^q=z.\qedhere
\end{equation*}
\end{proof}

\begin{proof}[Proof of Lemma \ref{prop:shortest_path}]
By construction, $z_1=x+2^{r_0}$, where $r_0$ is maximal with the property $2^{r_0}\vert x$ and $x+2^{r_0}\leq y$. This is the largest possible step towards $y$. Any monotone path from $x$ to $y$ that does not have $(x,z_1)$ as its first edge must use a shorter first edge $(x,a)$ with $x<a<z_1$. But by Lemma \ref{prop:crossing}, any edge starting at $a$ ends at or before $z_1$. Thus, every monotone path from $x$ to $y$ has to visit the vertex $z_1$. Since we could replace the part of any monotone path going from $x$ to $z_1$ by the single edge $(x,z_1)$ any shortest monotone path has to use $(x,z_1)$ as its first edge. By applying the same argument to the path from $z_1$ and $y$, we obtain that the construction above yields the unique monotone shortest path from $x$ to $y$.   

Let $x=z_0,\ldots,z_l=y$ be the shortest monotone path as constructed as above. Let $r_i$ be the integer such that $2^{r_i}\vert z_i$ and $z_{i+1}=z_i+2^{r_i}$. If $r_i$ is maximal with the property that $2^{r_i}\vert z_i$, then $z_i=2^{r_i}a_i$ with $a_i$ odd. Thus, $z_{i+1}=z_i+2^{r_i}=2^{r_i}a_i+2^{r_i}=2^{r_i}(a_i+1)$ with $(a_i+1)$ even and $r_{i+1}>r_i$ as long as $z_{i+1}+2^{r_{i+1}}\leq y$. Therefore, we take increasingly larger steps until we either reach $y$ or reach a point where the biggest possible step would overshoot $y$. Since $y-x\leq 2^t$, we can take at most $t-1$ of these increasing steps. If we reach a point $z_j$ where $z_j+2^{r_{j}}\leq y$ but $z_j=2^{r_j}b_j$ with $b_j$ even, then $z_{j+1}=z_j+2^{r_j}=2^{r_j}(b_j+1)$ with $(b_j+1)$ odd. If $r_{j+1}=r_j$, then $z_{j+2}=z_{j+1}+2^{r_j}=z_j+2^{r_j+1}$ but this overshoots $y$ by construction of $r_j$. Thus, $r_{j+1}<r_{j}$. By repeating this argument we have to take steps of decreasingly smaller size. Since $r_j$ is at most $t-1$, except for the case of $x=0$ and $y=2^t$ where the shortest path is of length one, we can take at most $t-1$ such decreasing steps. Therefore, overall the shortest path is of length at most $2(t-1)$. This bound is sharp as it it realized by the shortest monotone path from $1$ to $2^{t}-1$. 

The argument above also shows that the algorithm does not have to check all $t$ possible values to find the maximal $r_i$ such that $2^{r_i}\vert z_i$ and $z_i+2^{r_i}\leq y$. We only have to increase $r_i$ up to the point where we would first overshoot $y$ and from there on we only decrease it. Hence, we only have to scan through all possible values of $t$ at most twice. Since the constructed path has length smaller than $2(t-1)$, this algorithm takes at most $2(t-1)+2t$ steps.   
\end{proof}

\subsection{Filling Cycles} \label{app:cycle_filling}

\begin{figure}[t]
    \centering
    \includegraphics[scale=0.5]{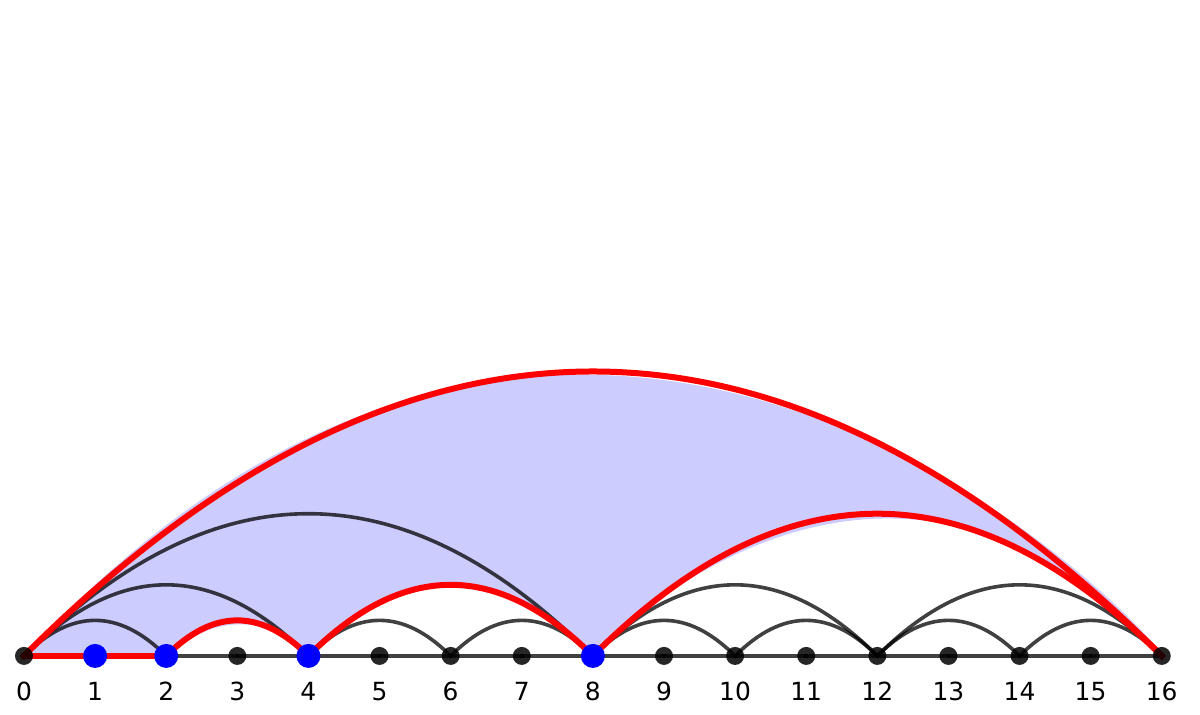}
    \caption{A simple red cycle in $L$ for $t=4$, together with the filling triangles in blue corresponding to the inner vertices.}
    \label{fig:triangulation}
\end{figure}

In this section we prove:

\cyclelemma*

The log-path resolution $\mathcal{L}_\sigma$ has the structure of $L$ viewed as a simplicial complex where all the inner triangles are filled. The triangles correspond to the syzygies of $\mathcal{L}_\sigma$. In other words, we have to solve the task of finding triangles that fill a given cycle in $L$, as illustrated in Figure \ref{fig:triangulation}. In this Section, we describe an efficient algorithm to solve this task. The Lemma then follows directly from Lemma \ref{prop:cycle_finding}.

We first focus on simple cycles, i.e., cycles that have no repeating vertices (self intersections). Moreover, we assume that the cycles are fully canceled over $\mathbb{Z}_2$, i.e., there are no repeating edges. 

First we observe that we can identify every triangle with a vertex. By construction, every triangle consists of two edges of length $2^r$ and one edge of length $2^{r+1}$. In other words, each triangle can be written as $(a,b,c)$ where $a<b<c$, $b-a=2^r$ and $c-b=2^r$. We can identify each triangle with the vertex opposite to the longest edge, i.e., $(a,b,c)\sim b$. If $(a,c)$ is the longest edge of a triangle, then $b=\frac{a+c}{2}$. The only vertices that are not matched with a triangle in this way are $0$ and $2^t$. Conversely, if $0<x<2^t$ is a vertex in $L$, and $r$ is maximal such that $2^r\vert x$, then $(x-2^r,x,x+2^r)$ is a triangle in $L$. Thus, we also identify $x\sim (x-2^r,x,x+2^r)$ and obtain a bijection between the interior vertices on $L$ and the triangles. 

\begin{lemma} \label{prop:longest_edge}
Each simple cycle in $L$ has a unique longest edge.
\end{lemma}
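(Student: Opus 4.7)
The plan is to use the nested structure of $L$ guaranteed by Lemma \ref{prop:crossing}. Let $C$ be a simple cycle and $e = (a,c)$ an edge of $C$ of maximal length $c - a = 2^q$. I would show that the path $P := C \setminus \{e\}$, which runs from $a$ to $c$ without repeating vertices, has all its internal vertices in the open interval $(a,c)$; from this the conclusion follows quickly.

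To establish containment of $P$ in $[a,c]$, I would argue by separating two cases. Suppose first that some internal vertex $y$ of $P$ lies in $(a,c)$. Lemma \ref{prop:crossing} implies that every edge of $L$ incident to such a $y$ has both endpoints in $[a,c]$: whichever endpoint of such an edge is smaller is still in $[a,c]$ by the crossing inequality, and the larger one satisfies $y + 2^r \leq c$ (or was already $\leq y < c$). By induction along the edges of $P$, starting from $y$ and walking towards $a$ and towards $c$, every vertex of $P$ lies in $[a,c]$; the walk cannot escape this interval because the only vertices of $[a,c]$ with edges reaching outside are $a$ and $c$ themselves, which are the endpoints of $P$ and hence visited only once. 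The alternative case is that $P$ is entirely disjoint from $(a,c)$, so that $P$ connects $a$ to $c$ using only vertices in $[0,a] \cup [c,2^t]$. Any such path must contain at least one edge $(a',c')$ with $a' \leq a$ and $c' \geq c$, whose length is at least $c - a = 2^q$. By maximality of $e$ this length equals $2^q$, so $a' = a$ and $c' = c$, i.e. the edge is $e$ itself; this contradicts $e \notin P$.

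With $P \subseteq [a,c]$ in hand, any edge $e' = (a',c')$ of $C$ distinct from $e$ satisfies $a \leq a' < c' \leq c$ and therefore $c' - a' \leq 2^q$. If equality held, then $a' - a = c' - c \leq 0$ together with $a' \geq a$ would force $a' = a$ and $c' = c$, so $e' = e$, a contradiction. Thus $e$ is strictly longer than every other edge of $C$, proving uniqueness.

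The main obstacle I foresee is in handling the outside case cleanly, since a priori the path $P$ could oscillate between the two components $[0,a)$ and $(c,2^t]$. I would dispatch this by observing that any single step between these components must traverse an edge whose length is at least the gap $c - a$, which by maximality can only be $e$ itself; the simplicity of $C$ then forbids such a traversal and rules out the outside case entirely.
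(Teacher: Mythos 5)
Your proof is correct and follows essentially the same route as the paper: pick a longest edge $e=(a,c)$, use planarity via Lemma~\ref{prop:crossing} to trap the rest of the cycle on one side of $e$, and derive either a containment $P\subseteq[a,c]$ (so every other edge lies strictly inside $[a,c]$ and is shorter) or a spanning edge forced by maximality to coincide with $e$ itself. One small imprecision worth fixing: Lemma~\ref{prop:crossing} as stated only bounds the \emph{upper} endpoint $y+2^r\le c$ of an edge incident to $y\in(a,c)$, so to conclude $y-2^r\ge a$ you should apply it to the edge $(y-2^r,y)$ and the vertex $a$ --- if $y-2^r<a<y$ then, since $2^q\mid a$, the lemma gives $a+2^q\le y$, but $a+2^q=c>y$, a contradiction.
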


\begin{proof}
Let $x_0,\ldots,x_l$ be the vertices of a simple cycle in $L$. It is clear that there exists a longest edge. W.l.o.g.\ assume that $(x_0,x_l)$ is a longest edge and $x_0<x_l$. This edge needs to have length $x_l-x_0\geq 2$, as otherwise $x_0,\ldots,x_l$ would not be a cycle. Each edge of length greater than one cuts the graph into two parts, as depicted in Figure \ref{fig:triangulation}. We can visualize it as a part lying under an arc and a part over an arc.  Since the cycle is simple and the graph planar, we can not repeat a vertex and the remaining cycle has to completely lie on one of the two sides. If this cycle lies under $(x_0,x_l)$, then that same edge needs to be its longest edge by Lemma \ref{prop:crossing}. If this cycle lies over $(x_0,x_l)$, then  the only way to get back to $x_0$ from $x_l$ is to take a longer edge over $(x_0,x_l)$ which contradicts that $(x_0,x_l)$ is the longest edge. Therefore, $(x_0,x_l)$ is the unique longest edge of the simple cycle.    
\end{proof}

\begin{lemma} \label{prop:filling_simple_cycles}
If $x_0,\ldots,x_l$ are the vertices on a simple cycle in $L$ with longest edge $(x_0,x_l)$, then we can fill the cycle with the triangles corresponding to $x_1,\ldots,x_{l-1}$. In other words,
\begin{equation*}
\partial\left[\sum _{i=1}^{l-1}(x_i-2^{r_i},x_i,x_i+2^{r_i})\right]=(x_0,x_l)+\sum_{i=0}^{l-1} (x_i,x_{i+1}).
\end{equation*}
\end{lemma}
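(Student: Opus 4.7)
I would argue by strong induction on $l$, the number of inner-path edges of the cycle. For the base case $l=2$, the cycle is a triangle $x_0<x_1<x_2$ in $L$. Any triangle of $L$ has the form $(a,a+2^r,a+2^{r+1})$ with $2^{r+1}\mid a$, so $x_0=a$, $x_1=a+2^r$ and $x_2=a+2^{r+1}$; writing $a=2^{r+1}k$ gives $a+2^r=2^r(2k+1)$, so $r_1=\nu_2(x_1)=r$. Hence $T_1=(x_0,x_1,x_2)$ and $\partial T_1$ matches the stated sum of edges.

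For the inductive step, let $2^q$ be the length of the longest edge, so $q\ge 1$ and $2^q\mid x_0$. The crux is to show that the midpoint $m:=x_0+2^{q-1}$ is a cycle vertex. Since the path $x_0\to x_1\to\cdots\to x_l$ travels from $x_0<m$ to $x_l>m$, either $m$ appears as some $x_j$ or some path edge $(u,v)$ has $u<m<v$. The case $v-u=2^s$ with $s\ge q$ would give an edge at least as long as the unique longest edge (Lemma \ref{prop:longest_edge}), which is ruled out. For $s\le q-1$, writing $u=x_0+2^sb$ (valid since $2^s\mid x_0$), the conditions $u<m<v$ become $b<2^{q-1-s}<b+1$, forcing the integer $2^{q-1-s}$ to lie strictly between two consecutive integers -- a contradiction. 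Hence $m=x_j$ for some $j\in\{1,\ldots,l-1\}$.

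Split the cycle at $m$ into $C_1=\sum_{i=0}^{j-1}(x_i,x_{i+1})+(x_0,m)$ and $C_2=\sum_{i=j}^{l-1}(x_i,x_{i+1})+(m,x_l)$. For $j=1$ the new edge $(x_0,m)$ equals $(x_0,x_1)$, so $C_1=0$ over $\mathbb{Z}_2$; symmetrically $C_2=0$ when $j=l-1$. Otherwise each $C_k$ is a strictly shorter simple cycle whose unique longest edge is the new diagonal of length $2^{q-1}$: the only multiples of $2^{q-1}$ in $[x_0,x_l]$ are $x_0,m,x_l$, so the only candidate edges of that length are $(x_0,m)$ and $(m,x_l)$, and for $2\le j\le l-2$ neither appears among the original path edges inside $C_k$. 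The inductive hypothesis then gives $\partial\bigl(\sum_{i\ne j}T_i\bigr)=C_1+C_2$. Finally, $\nu_2(m)=q-1$ yields $T_j=T_m=(x_0,m,x_l)$ with $\partial T_m=(x_0,m)+(m,x_l)+(x_0,x_l)$, so
\[
\partial\Bigl(\sum_{i=1}^{l-1}T_i\Bigr) \;=\; C_1+C_2+\partial T_m \;=\; \sum_{i=0}^{l-1}(x_i,x_{i+1}) + (x_0,x_l).
\]

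The main obstacle is the divisibility argument establishing that the midpoint of the longest edge always lies on the cycle; once this is in hand, the remainder is formal bookkeeping, relying on Lemma \ref{prop:longest_edge} and on the identification of $T_m$ as the bisecting triangle of $(x_0,x_l)$.
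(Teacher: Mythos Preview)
Your proof is correct and follows essentially the same approach as the paper: both arguments show that the midpoint $m=\tfrac{x_0+x_l}{2}$ must lie on the cycle, split the cycle at $m$ into two smaller simple cycles with longest edges $(x_0,m)$ and $(m,x_l)$, recurse, and add the bisecting triangle $(x_0,m,x_l)$. The only difference is stylistic---the paper phrases the midpoint argument geometrically (``$m$ cuts the region under the arc into two components''), while you give an explicit divisibility computation---but the underlying idea is identical.
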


\begin{proof}
We have already observed (in the proof of Lemma \ref{prop:longest_edge}) that the path $x_0,\ldots,x_l$ has to lie under the longest edge $(x_0,x_l)$. The point $\frac{x_0+x_l}{2}$ also lies in the part of the graph lying under $(x_0,x_l)$ and further cuts it into two components (the only edge of the induced subgraph going over it is $(x_0,x_l)$). Thus, the cycle has to visit the point $\frac{x_0+x_l}{2}=x_i$ for some $0<i<l$. By adding the edges $(x_0,x_i)$ and $(x_i,x_l)$, we obtain two simple cycles $x_0,\ldots,x_i$ and $x_i,\ldots,x_l$ which might be trivial (consisting of two edges $(x_0,x_i)$ or $(x_i,x_l)$). In each of these simple cycles $(x_0,x_i)$ and $(x_i,x_l)$ are the longest edge, because they are the longest edges in the respective induced subgraphs. If $\Delta_1$ and $\Delta_2$ are collections of triangles that fill the cycles $x_0,\ldots,x_i$ and $x_i,\ldots,x_l$, respectively, then $\Delta_1+\Delta_2+(x_0,x_i,x_l)$ fills the cycle $x_0,\ldots,x_l$. Therefore, we can repeat the same argument for non-trivial cycles until they become trivial. In this way, we obtain that $\sum _{i=1}^{l-1}(x_i-2^{r_i},x_i,x_i+2^{r_i})$ fills $x_0,\ldots,x_l$.
\end{proof}

By Lemma \ref{prop:filling_simple_cycles}, we can fill a simple cycle of length $l$ in $O(l)$ time. If we are given an arbitrary fully canceled cycle $x_0,\ldots,x_l$, we can decompose it into simple cycles and then apply the same argument again. If we are only given an unordered cycle graph, we can compute an ordered closed walk in linear time by a simple greedy traversal that marks visited edges (sometimes referred to as Hierholzer's algorithm). Hence, we assume the a closed walk as an input and use the following algorithm to decompose it into simple cycles. \\

Input: a closed walk $x_0,\ldots,x_l$. Maintain a stack $S$ of vertices and a map $\text{pos}\colon V\rightarrow \mathbb{N}_0$ that stores
the index of a vertex in $S$.

\begin{enumerate}
\item Initialize $S\gets[\,]$, $\text{pos}[-]=-1$, and an empty list $\mathcal{C}$ of cycles.
\item For $i=0,1,\dots,l-1$:
  \begin{itemize}
  \item If $\text{pos}[x_i]=-1$: push $x_i$ onto $S$ and set $\text{pos}[x_i]\gets |S|-1$.
  \item Else (a repeat): let $j=\text{pos}[x_i]$. Append the cycle $(S[j], S[j{+}1], \dots, S[|S|-1])$ to $\mathcal{C}$. Then pop $S[j+1],\dots,S[|S|-1]$ off the stack and set $\text{pos}[S[r]]=-1$ for $j< r\leq\vert S\vert-1$ (keep $S[j]=x_i$).
  \end{itemize}
\item Return $\mathcal{C}$.
\end{enumerate}

The algorithm goes over the closed walk and remembers which vertices are already visited. As long as we do not hit an already visited vertex all vertices on the current stack are distinct. If the walk hits an already visited vertex the first time, then the part of the walk since the repeated vertex forms a simple cycle. After removing this simple cycle from the stack the remaining vertices form a vertex distinct walk again. We proceed in this way until all input vertices are processed. After termination $\mathcal{C}$ contains a decomposition of the input into simple cycles. The initialization of $\text{pos}$ takes $O(\vert V\vert)$ time. We process the input sequence by a single pass. Every vertex in the input sequence is put onto the stack at most once and removed from the stack at most once. The lookup in $\text{pos}$ takes $O(1)$. Therefore, the time complexity is $O(l)$.   

\begin{lemma} \label{prop:cycle_finding}
Given a list of edges $e_0,\ldots,e_l$ forming a cycle in $L$, we can find filling triangles in $O(l)$ time.
\end{lemma}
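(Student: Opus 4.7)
The plan is to combine the ingredients that have already been put in place: a greedy ordering of the edges into a closed walk, the stack-based decomposition algorithm described immediately before the lemma, and Lemma~\ref{prop:filling_simple_cycles} which fills a single simple cycle.

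First I would turn the unordered edge list into a closed walk $x_0,\ldots,x_l$. Since the input forms a cycle (in the chain-complex sense, a fully cancelled $1$-cycle), every vertex of the underlying multigraph has even degree, so a closed Eulerian walk exists; a greedy Hierholzer-style traversal that marks each edge on first use produces such a walk in $O(l)$ time, using an adjacency list built in $O(l)$ time from the input edges.

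Next I would feed this walk into the stack-based decomposition algorithm stated just before the lemma. That algorithm processes the walk in a single left-to-right pass, pushing each fresh vertex onto a stack and, whenever a repeated vertex is encountered, peeling off the suffix of the stack as one simple cycle. Correctness follows from the observation that, between two successive emissions, the stack contains pairwise distinct vertices, so the peeled suffix is genuinely a simple cycle; and overall each edge of the walk is inserted and removed from the stack at most once. This gives a partition of the original edge set into simple cycles $C_1,\ldots,C_s$ with $\sum_j |C_j| = l$, in total time $O(l)$.

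Finally, on each simple cycle $C_j$ I would invoke Lemma~\ref{prop:filling_simple_cycles}: locate its unique longest edge (guaranteed by Lemma~\ref{prop:longest_edge}) by a single scan through the $|C_j|$ edges, then output the triangles corresponding to the interior vertices of $C_j$. The scan and the output together cost $O(|C_j|)$, so summing over all simple cycles yields $O(l)$ time. The union of all triangles produced this way has boundary equal to $\sum_{j}\partial C_j$ modulo the longest edges, but the longest edges of the $C_j$ are precisely the edges added and immediately removed during the decomposition (each such edge is shared by exactly two simple cycles and cancels over $\mathbb{Z}_2$), so the total boundary is the original cycle, as required.

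The only mildly subtle point — and the place I would spend the most care — is verifying that the added ``closing'' edges introduced when splitting the walk into simple cycles cancel correctly in pairs so that the filling triangles sum to the prescribed input cycle; this is a straightforward bookkeeping check using the structure of the stack-based decomposition, but it is what makes the piecewise application of Lemma~\ref{prop:filling_simple_cycles} valid for the whole (not necessarily simple) input cycle.
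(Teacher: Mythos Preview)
Your approach is the same as the paper's: build an adjacency structure, run Hierholzer for a closed walk, apply the stack-based simple-cycle decomposition, then invoke Lemma~\ref{prop:filling_simple_cycles} on each piece, all in $O(l)$.

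Your last two paragraphs, however, rest on a misconception. The stack-based decomposition introduces no ``closing'' edges: when $x_i$ repeats the vertex $S[j]$, the emitted cycle $S[j],S[j{+}1],\ldots,S[|S|{-}1],S[j]$ uses only edges of the walk itself (the final edge is $(x_{i-1},x_i)$, which is already a walk edge). Hence the simple cycles $C_j$ \emph{partition} the input edge set, so $\sum_j C_j$ equals the input cycle directly. By Lemma~\ref{prop:filling_simple_cycles} the triangles for $C_j$ have boundary exactly $C_j$ (longest edge included), and summing gives the input cycle with no cancellation needed. Your claim that the longest edges of the $C_j$ are ``added and immediately removed'' and ``shared by exactly two simple cycles'' is incorrect; each longest edge is an original input edge and belongs to exactly one $C_j$. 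The argument is therefore simpler than you feared, and the ``mildly subtle point'' is a non-issue.
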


\begin{proof}
We now combine all the previous arguments in the Section. Let $\{e_0,\ldots,e_l\}$ be a potentially unordered list of edges that forms a cycle in $L$. Assuming a global map from edges to their boundary vertices, we can build a graph data structure, like an adjacency list, representing the cycle graph $C$ in linear time. We can then compute an Euler tour in $C$ in linear time. Given this Euler tour, we can decompose it into simple cycles in linear time, using the algorithm discussed above. For each simple cycle, we can directly read off the filling triangles in linear time by Lemma \ref{prop:filling_simple_cycles}. For that we need to know the longest edge but we can compute this during the cycle decomposition without overhead. Note that for the identification of vertices and triangles we do not have to check for each vertex what is the biggest power of two dividing it. We can just compute this identification while building the graph. Each triangle also corresponds to its longest edge $(a,b)$ and then the corresponding vertex is $\frac{a+b}{2}$. 
\end{proof}


\subsection{Proof of quasi-isomorphism}
\label{app:quasiiso_logpath}

This section is devoted to the proof of Theorem \ref{thm:quais_iso_logpath}. We restate Diagram \ref{diag:logpath_double_complex} for the convenience of the reader.
\begin{equation} \label{eq:diagram_logpath_app}
\begin{tikzcd} [column sep=large,row sep=large]
& \vdots \arrow[d] & \vdots \arrow[d] & \vdots \arrow[d] & \vdots \arrow[d] \\
0 & C_3 \arrow[l] \arrow[d,swap,"\partial_3"] & G_3 \arrow[l,swap,"\alpha_3"] \arrow[d,swap,"f_3^0"] \arrow[ddr,"h_3^0"{xshift=-10pt,yshift=10pt}] \arrow[dddrr,violet,bend left,"H_3^0"{xshift=-10pt,yshift=-14pt}] & R_3 \arrow[l,swap,"p_3^1"] \arrow[d,crossing over,"f_3^1"] \arrow[ddr,blue,"h_3^1"{xshift=-10pt,yshift=10pt}] & S_3 \arrow[l,swap,"p^2_3"] \arrow[d,red,"f_3^2"] & 0 \arrow[l] \\
0 & C_2 \arrow[l] \arrow[d,swap,"\partial_2"] & G_2 \arrow[l,swap,"\alpha_2"] \arrow[d,swap,"f_2^0"] \arrow[ddr,"h_2^0"{xshift=-10pt,yshift=10pt}] & R_2 \arrow[l,swap,"p_2^1"{xshift=3pt},crossing over] \arrow[d,"f_2^1"] \arrow[ddr,blue,"h_2^1"{xshift=-10pt,yshift=10pt}] & S_2 \arrow[l,swap,"p^2_2"{xshift=3pt},crossing over] \arrow[d,red,"f_2^2"] & 0 \arrow[l] \\
0 & C_1 \arrow[l] \arrow[d,swap,"\partial_1"] & G_1 \arrow[l,swap,"\alpha_1"] \arrow[d,swap,"f_1^0"] & R_1 \arrow[l,swap,"p_1^1"{xshift=3pt},crossing over] \arrow[d,"f_1^1"] & S_1 \arrow[l,swap,"p^2_1"{xshift=3pt},crossing over] \arrow[d,red,"f_1^2"] & 0 \arrow[l] \\
0 & C_0 \arrow[l] \arrow[d] & G_0 \arrow[l,swap,"\alpha_0"] \arrow[d] & R_0 \arrow[l,swap,"p_0^1"] \arrow[d] & S_0 \arrow[l,swap,"p^2_0"] \arrow [d]& 0 \arrow[l] \\
& 0 & 0 & 0 & 0
\end{tikzcd}
\end{equation} 

It is constructed from the input chain complex $C_\bullet$, induced by a simplicial bifiltration, in the following way: The $i$-th row is the sum of the log-path resolutions $\mathcal{L}_\sigma$ over all $i$-simplices and is, thus, a free resolution of $C_i$. The maps $f_i^\bullet$ are constructed such that the squares commute and are lifts of the boundary maps $\partial_i$ according to Lemma \ref{lem:fundamental_lemma}. Moreover, the maps $h^0_i,h^1_i,H^0_i$ in \eqref{eq:diagram_logpath_app} are constructed such that they satisfy
\begin{equation} \label{eq:map_properties}
\begin{aligned}
p_{i-2}^1\circ h^0_i=f^0_{i-1}\circ f^0_{i} \\
h^0_i\circ p^1_i + p^2_{i-2}\circ h^1_i = f^1_{i-1}\circ f^1_i \\ 
h^1_i\circ p_i^2=f^2_{i-1}\circ f^2_{i} \\
h^0_{i-1} \circ f^0_i + f^1_{i-2}\circ h^0_{i} =p^2_{i-3}\circ H_i^0 .
\end{aligned}
\end{equation}
We note again that the property $h^1_i\circ p_i^2=f^2_{i-1}\circ f^2_{i}$ follows from the fact that $p_{i-2}^2$ is a monomorphism, as in \ref{app:path_correctness}. The construction of these maps is based on the exactness of the rows in \eqref{eq:diagram_logpath_app} and

\kernels*

\begin{proof}
    By commutativity of Diagram \ref{eq:diagram_logpath_app}, it holds that
    \begin{equation*}
     \begin{split}
          p^1_i\circ (f^1_{i+1}\circ p^2_{i+1})=f_{i+1}^0\circ p_{i+1}^1\circ p_{i+1}^2=0 
     \end{split}
    \end{equation*}
    and
    \begin{equation*}
     \begin{split}
            p^1_{i}\circ (f^1_{i+1}\circ f^1_{i+2} + h^0_{i+2}\circ p^1_{i+2}) & = f^0_{i+1}\circ f^0_{i+2} \circ p^1_{i+2} + f_{i+1}^0\circ f_{i+2}^0 \circ p^1_{i+2} = 0
    \end{split}
    \end{equation*}
    and
    \begin{equation*}
    \begin{split}
    p^1_{i}\circ(h^0_{i+2}\circ f^0_{i+3} + f^1_{i+1} \circ h^0_{i+3}) &= f_{i+1}^0\circ f_{i+2}^0\circ f_{i+3}^0+f_{i+1}^0\circ p_{i+1}^1\circ h_{i+3}^0 \\ & = f_{i+1}^0\circ f_{i+2}^0\circ f_{i+3}^0+f_{i+1}^0\circ f_{i+2}^0\circ f_{i+3}^0=0 .
    \end{split}
    \end{equation*}
\end{proof}

\begin{remark}
On a high level, these maps can be understood in the following way. Again by Lemma \ref{lem:fundamental_lemma}, the composition $f_{i-1}^\bullet\circ f_i^\bullet$ lifts the zero morphism $\partial_{i-1}\circ\partial_i=0$ and is thus homotopic to zero. Hence, there exists a chain homotopy $h_i^\bullet$ such that $p_{i-2}^\bullet\circ h_i^\bullet+h_i^\bullet\circ p_i^\bullet=f_{i-1}^\bullet\circ f_i^\bullet$. Similarly, the composition $f_{i-2}^\bullet\circ f_{i-1}^\bullet\circ f_i^\bullet$ lifts $\partial_{i-2}\circ\partial_{i-1}\circ\partial_i=0$ and is therefore homotopic to zero. In this case, the compositions $f_{i-2}^\bullet\circ h_i^\bullet$ and $h_{i-1}^\bullet\circ f_i^\bullet$ constitute homotopies between zero and $f_{i-2}^\bullet\circ f_{i-1}^\bullet\circ f_i^\bullet$. Such homotopies are again unique up to a higher homotopy $H_i^\bullet$ such that $p_{i-3}^\bullet\circ H_i^\bullet+H_i^\bullet\circ p_i^\bullet=f_{i-2}^\bullet\circ h_i^\bullet+h_{i-1}^\bullet\circ f_i^\bullet$.  
\end{remark}

We now show that given Diagram \ref{eq:diagram_logpath_app}, with the properties discussed above, we obtain the following:
\begin{proposition}
The upper row of \eqref{eq:quasi_iso_3} is a chain complex and the vertical maps form a morphism of chain complexes.
\end{proposition}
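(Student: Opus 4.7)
My plan is to verify both assertions by direct block-matrix computation, systematically exploiting the identities in \eqref{eq:map_properties}, the chain-map property of $f^\bullet$, exactness of each row of \eqref{eq:diagram_logpath_app}, and the fact that coefficients lie in $\mathbb{Z}_2$ so that pairs of identical terms cancel. This mirrors the strategy used in Appendix~\ref{app:path_correctness} for the path algorithm, but with an additional block row and column coming from the syzygies.

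For the chain-complex property I would write, for $i\ge 2$, the boundary $\partial_i$ as the lower-block-triangular matrix on the tuple $(G_i,R_{i-1},S_{i-2})$ given in \eqref{diag:logpath_output}, and compute the composition $\partial_i\circ\partial_{i+1}$ entry by entry. The six entries in the top two block rows vanish immediately: the top row by the first identity in \eqref{eq:map_properties}, the chain-map identity $f^0_i p^1_i=p^1_{i-1}f^1_i$, and $p^1_{i-1}p^2_{i-1}=0$; the middle row by the fourth identity, the second identity, and the dimension-two chain-map identity $f^1_{i-1}p^2_{i-1}=p^2_{i-2}f^2_{i-1}$. In each case two copies of the same term are produced and cancel over $\mathbb{Z}_2$.

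The main obstacle is the bottom row of $\partial_i\circ\partial_{i+1}$, whose three entries
\[
H^0_i f^0_{i+1}+h^1_{i-1}h^0_{i+1}+f^2_{i-2}H^0_{i+1},\quad H^0_i p^1_i+h^1_{i-1}f^1_i+f^2_{i-2}h^1_i,\quad h^1_{i-1}p^2_{i-1}+f^2_{i-2}f^2_{i-1}
\]
involve the highest-order homotopy $H^0$ and do not follow directly from \eqref{eq:map_properties}. The third vanishes at once by the third identity (with shifted index). For the first two, the plan is to postcompose with $p^2_{i-3}$ and invoke
\begin{align*}
p^2_{i-3}\circ H^0_i &= h^0_{i-1}f^0_i+f^1_{i-2}h^0_i,\\
p^2_{i-3}\circ h^1_{i-1} &= h^0_{i-1}p^1_{i-1}+f^1_{i-2}f^1_{i-1},\\
p^2_{i-3}\circ f^2_{i-2} &= f^1_{i-2}p^2_{i-2},
\end{align*}
the first two being the fourth and second identities of \eqref{eq:map_properties} with shifted indices, and the third being the dimension-two chain-map identity. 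Expanding the resulting sums and applying the first identity together with $f^0_i p^1_i=p^1_{i-1}f^1_i$ once more, every term appears an even number of times and hence cancels in $\mathbb{Z}_2$. Since $p^2_{i-3}$ is injective, being the last non-zero differential of an exact free resolution, the original entry vanishes. The only genuine difficulty lies in organising the many summands carefully.

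The assertion about the vertical maps is then immediate: they are given by $(\alpha_i,0,0)$ at position $i$, so $(\alpha_{i-1},0,0)\circ\partial_i$ leaves only $\alpha_{i-1}\circ f^0_i$ in the first slot, because $\alpha_{i-1}\circ p^1_{i-1}=0$ by exactness of the $(i{-}1)$-st row. Commutativity of the leftmost squares in \eqref{eq:diagram_logpath_app} identifies this with $\partial^C_i\circ\alpha_i$, which is precisely the required chain-map condition.
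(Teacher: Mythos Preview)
Your proposal is correct and follows essentially the same route as the paper: compute the $3\times 3$ block composition, dispatch the upper two block rows directly via the identities in \eqref{eq:map_properties} and the chain-map property of $f^\bullet$, and handle the remaining bottom-row entries by postcomposing with the injective map $p^2_{i-3}$. The paper explicitly works out only the $(3,1)$ entry $H^0_i f^0_{i+1}+h^1_{i-1}h^0_{i+1}+f^2_{i-2}H^0_{i+1}$ via postcomposition and asserts the others follow ``by exactness or the properties''; you are right to note that the $(3,2)$ entry $H^0_i p^1_i+h^1_{i-1}f^1_i+f^2_{i-2}h^1_i$ also requires the postcomposition-with-$p^2_{i-3}$ argument, since no identity in \eqref{eq:map_properties} controls $H^0_i\circ p^1_i$ directly. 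Your treatment of the chain-map assertion is identical to the paper's.
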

\begin{equation}  \label{eq:quasi_iso_3}
\adjustbox{max width=0.92\displaywidth}{
\begin{tikzcd}[column sep=large,row sep=large,ampersand replacement=\&,every label/.append style = {font = \small}]
0 \&[-5pt] G_0 \arrow[l] \arrow[d,"\alpha_0"] \&[7pt] G_1\oplus R_0 \arrow{l}[swap]{\begin{pmatrix}f^0_1 & p_0^1\end{pmatrix}} \arrow{d}{\begin{pmatrix}\alpha_1 & 0\end{pmatrix}} \&[12pt] G_2\oplus R_1 \oplus S_0 \arrow{l}[swap]{\begin{pmatrix}f^0_2 & p_1^1 & 0 \\ h_2^0 & f_1^1 & p_0^2\end{pmatrix}} \arrow{d}{\begin{pmatrix}\alpha_2 & 0&0\end{pmatrix}} \&[12pt]  G_3\oplus R_2\oplus S_1 \arrow{d}{\begin{pmatrix}\alpha_3 & 0 & 0\end{pmatrix}} \arrow{l}[swap]{\begin{pmatrix}f^0_3 & p_2^1 & 0 \\ h_3^0 & f_2^1 & p_1^2 \\ H_3^0 & h_2^1 & f_1^2 \end{pmatrix}} \&[-5pt] \cdots \arrow[l] \\[10pt]
0 \& C_0 \arrow[l] \& C_1 \arrow[l,swap,"\partial_1"] \& C_2 \arrow[l,swap,"\partial_2"] \& C_3 \arrow[l,swap,"\partial_3"] \& \cdots \arrow[l]
\end{tikzcd}
}
\end{equation}

\begin{proof}
We take the composition of two consecutive differentials in \eqref{eq:quasi_iso_3}:
\begin{equation*} \small
\begin{aligned}
&\begin{pmatrix}f_i^0 & p_{i\minus 1}^1 & 0 \\ h_i^0 & f_{i\minus 1}^1 & p_{i\minus 2}^2 \\ H_i^0 & h_{i\minus 1}^1 & f_{i\minus 2}^2 \end{pmatrix}\circ\begin{pmatrix}f_{i+1}^0 & p_{i}^1 & 0 \\ h_{i+1}^0 & f_{i}^1 & p_{i\minus 1}^2 \\ H_{i+1}^0 & h_{i}^1 & f_{i\minus 1}^2 \end{pmatrix}= \\ &\begin{pmatrix}f_i^0\circ f_{i+1}^0+p_{i\minus 1}^1\circ h_{i+1}^0 & f_i^0\circ p_i^1+p_{i\minus 1}^1\circ f_i^1 & p_{i\minus 1}^1\circ p_{i\minus 1}^2 \\ h_i^0\circ f_{i+1}^0+f_{i\minus 1}^1\circ h_{i+1}^0+p_{i\minus 2}^2\circ H_{i+1}^0 & h_i^0\circ p_i^1+f_{i\minus 1}^1\circ f_i^1+p_{i\minus 2}^2\circ h_i^1 & f_{i\minus 1}^1\circ p_{i\minus 1}^2+p_{i\minus 2}^2\circ f_{i\minus 1}^2 \\ H_i^0\circ f_{i+1}^0 + h_{i\minus 1}^1\circ h_{i+1}^0 +  f_{i\minus 2}^2\circ H_{i+1}^0 & H_i^0\circ p_i^1 + h_{i\minus 1}^1\circ f_i^1 +  f_{i\minus 2}^2\circ h_i^1 & h_{i\minus 1}^1\circ p_{i\minus 1}^2 +  f_{i\minus 2}^2\circ f_{i\minus 1}^2 \end{pmatrix}.
\end{aligned}
\end{equation*}
All of the entries of the matrix representation of the composition are zero because of the exactness of the rows or commutativity in \eqref{eq:diagram_logpath_app} or by the properties \eqref{eq:map_properties}, except for $H_i^0\circ f_{i+1}^0 + h_{i\minus 1}^1\circ h_{i+1}^0 + f_{i\minus 2}^2\circ H_{i+1}^0$. But for this entry we obtain:
\begin{equation*}
\begin{aligned}
& p_{i\minus 3}^2\circ \left[H_i^0\circ f_{i+1}^0 + h_{i\minus 1}^1\circ h_{i+1}^0 + f_{i\minus 2}^2\circ H_{i+1}^0\right] \\ &=  \left[ h_{i\minus 1}^0\circ f_{i}^0 + f_{i\minus 2}^1\circ h_{i}^0 \right] \circ f_{i+1}^0  +  \left[ f_{i\minus 2}^1\circ f_{i\minus 1}^1+h_{i\minus 1}^0\circ p_{i\minus 1}^1 \right]\circ h_{i+1}^0 + \left[ f_{i\minus 2}^1\circ p_{i\minus 2}^2 \right]\circ H_{i+1}^0 \\ & = h_{i\minus 1}^0\circ f_{i}^0\circ f_{i+1}^0 + f_{i\minus 2}^1\circ h_{i}^0\circ f_{i+1}^0 +f_{i\minus 2}^1\circ f_{i\minus 1}^1\circ h_{i+1}^0 \\&+ h_{i\minus 1}^0\circ\left[ f_{i}^0\circ f_{i+1}^0\right]+f_{i\minus 2}^1\circ\left[ h_i^0\circ f_{i+1}^0+f_{i\minus 1}^1\circ h_{i+1}^0 \right]=0.
\end{aligned}
\end{equation*}
Since $p_{i\minus 3}^2$ is a monomorphism, also this entry vanishes. Thus, \eqref{eq:quasi_iso_3} is indeed a chain complex. Similarly, we check that
\begin{equation*}
\begin{aligned}
\begin{pmatrix}\alpha_{i\minus 1} & 0 & 0\end{pmatrix}\circ \begin{pmatrix}f_i^0 & p_{i\minus 1}^1 & 0 \\ h_i^0 & f_{i\minus 1}^1 & p_{i\minus 2}^2 \\ H_i^0 & h_{i\minus 1}^1 & f_{i\minus 2}^2 \end{pmatrix}=\begin{pmatrix}\alpha_{i\minus 1}\circ f_i^0 & \alpha_{i\minus 1}\circ p_{i\minus 1}^1 & 0 \end{pmatrix}=\partial_{i}\circ \begin{pmatrix} \alpha_i & 0 & 0 \end{pmatrix}
\end{aligned}
\end{equation*}
and, thus, the vertical maps form a morphism of chain complexes.
\end{proof}

We are now ready to show that the morphism in \eqref{eq:quasi_iso_3} is a quasi-isomorphism and, therefore, the upper row of \eqref{eq:quasi_iso_3} is a free resolution of $C_\bullet$.

\quasiisologpath*

\begin{proof}[Proof of Theorem \ref{thm:quais_iso_logpath}]
We show that the mapping cone, $\text{cone}(\alpha)$:
\begin{equation*}
\adjustbox{max width=\displaywidth}{
\begin{tikzcd}[ampersand replacement=\&,every label/.append style = {font = \small}]
0 \& C_0 \arrow[l] \&[5pt] C_1\oplus G_0 \arrow{l}[swap,yshift=5pt]{\begin{pmatrix} \partial_1 & \alpha_0\end{pmatrix}} \&[15pt] C_2\oplus G_1\oplus R_1 \arrow{l}[swap,yshift=5pt]{\begin{pmatrix} \partial_2 & \alpha_1 & 0 \\ 0 & f^0_1 & p_0^1\end{pmatrix}} \&[20pt] C_3\oplus G_2\oplus R_1\oplus S_0 \arrow{l}[swap,yshift=5pt]{\begin{pmatrix} \partial_3 & \alpha_2 & 0 & 0 \\ 0 & f^0_2 & p_1^1 & 0 \\ 0 & h_2^0 & f_1^1 & p_0^2 \end{pmatrix}} \&[20pt] C_4\oplus G_3\oplus R_2\oplus S_1 \arrow{l}[swap,yshift=5pt]{\begin{pmatrix} \partial_4 & \alpha_3 & 0 & 0 \\ 0 & f^0_3 & p_2^1 & 0 \\ 0 & h_3^0 & f_2^1 & p_1^2 \\ 0 & H_3^0 & h_2^1 & f_1^2 \end{pmatrix}} \& \cdots \arrow[l]  
\end{tikzcd}
}
\end{equation*}
of the chain morphism in \eqref{eq:quasi_iso_3} is acyclic. Assume
\begin{equation*}
\begin{pmatrix} \partial_{i} & \alpha_{i\minus 1} & 0 & 0 \\ 0 & f^0_{i\minus 1} & p_{i\minus 2}^1 & 0 \\ 0 & h_{i\minus 1}^0 & f_{i\minus 2}^1 & p_{i\minus 3}^2 \\ 0 & H_{i\minus 1}^0 & h_{i\minus 2}^1 & f_{i\minus 3}^2 \end{pmatrix}\begin{pmatrix}x_1\\x_2\\x_3\\x_4\end{pmatrix}=\begin{pmatrix} \partial_{i}(x_1)+\alpha_{i\minus 1}(x_2) \\ f^0_{i\minus 1}(x_2) + p_{i\minus 2}^1(x_3) \\ h_{i\minus 1}^0(x_2) + f_{i\minus 2}^1(x_3) + p_{i\minus 3}^2(x_4) \\ H_{i\minus 1}^0(x_2) + h_{i\minus 2}^1(x_3) + f_{i\minus 3}^2(x_4) \end{pmatrix}=\begin{pmatrix}0\\0\\0\\0\end{pmatrix}.
\end{equation*}
Since $\alpha_{i}$ is an epimorphism, there exists $y_1\in G_{i}$ such that $\alpha_{i}(y_1)=x_1$. Then $\partial_i\circ\alpha_{i}(y_1)=\alpha_{i\minus 1}\circ f_i^0(y_1)=\alpha_{i\minus 1}(x_2)$ and, thus, $\alpha_{i\minus 1}\big(f_i^0(y_1)+x_2\big)=0$. By exactness, there exists $y_2\in R_{i\minus 1}$ such that $p_{i\minus 1}^1(y_2)=f_i^0(y_1)+x_2$. Using this relation, we get:
\begin{equation*}
\begin{aligned}
f_{i\minus 1}^0\big(x_2\big)=f_{i\minus 1}^0\big(f_i^0(y_1)+p_{i\minus 1}^1(y_2)\big)= p_{i\minus 2}^1\circ h_{i}^0(y_1)+p_{i\minus 2}^1\circ f_{i\minus 1}^1(y_2)=p_{i\minus 2}^1(x_3) .
\end{aligned}
\end{equation*}
Hence, $p_{i\minus 2}^1\big(h_{i}^0(y_1)+f_{i\minus 1}^1(y_2)+x_3\big)=0$ and, by exactness, there exists $y_3\in S_{i\minus 2}$ such that $p_{i\minus 2}^2(y_3)=h_{i}^0(y_1)+f_{i\minus 1}^1(y_2)+x_3$. Using this relation, we obtain
\begin{equation*}
    \adjustbox{max width=\displaywidth}{%
        $\displaystyle
        \begin{aligned}
            h_{i-1}^0(x_2)+f_{i-2}^1(x_3)&=h_{i-1}^1\big(f_i^0(y_1)+p_{i-1}^1(y_2)\big)+f_{i-2}^1\big(h_{i}^0(y_1)+f_{i-1}^1(y_2)+p_{i-2}^2(y_3)\big) \\
            &= \left[h_{i-1}^1\circ f_i^0+f_{i-2}^1\circ h_{i}^0 \right](y_1)+\left[h_{i-1}^1\circ p_{i-1}^1+f_{i-2}^1\circ f_{i-1}^1 \right](y_2)+f_{i-2}^1\circ p_{i-2}^2(y_3) \\
            &= p_{i-3}^2\circ H_i^0(y_1)+p_{i-3}^2\circ h_{i-1}^1(y_2)+p_{i-3}^2\circ f_{i-2}^2(y_3)=p_{i-3}^2(x_4).
        \end{aligned}
        $
    }
\end{equation*}
Hence, $p_{i\minus 3}^2\big(H_i^0(y_1)+h_{i\minus 1}^1(y_2)+f_{i\minus 2}^2(y_3)+x_4\big)=0$ and, since $p_{i\minus 3}^2$ is a monomorphism $x_4=H_i^0(y_1)+h_{i\minus 1}^1(y_2)+f_{i\minus 2}^2(y_3)$. We conclude that 
\begin{equation*}
\begin{pmatrix} \partial_{i+1} & \alpha_{i} & 0 & 0 \\ 0 & f^0_{i} & p_{i\minus 1}^1 & 0 \\ 0 & h_{i}^0 & f_{i\minus 1}^1 & p_{i\minus 2}^2 \\ 0 & H_{i}^0 & h_{i\minus 1}^1 & f_{i\minus 2}^2 \end{pmatrix}\begin{pmatrix}0\\y_1\\y_2\\y_3\end{pmatrix}=\begin{pmatrix}\alpha_i(y_1)\\f_i^0(y_1)+p_{i\minus 1}^1(y_2)\\h_{i}^0(y_1)+f_{i\minus 1}^1(y_2)+p_{i\minus 2}^2(y_3)\\H_i^0(y_1)+h_{i\minus 1}^1(y_2)+f_{i\minus 2}^2(y_3)\end{pmatrix}=\begin{pmatrix}x_1\\x_2\\x_3\\x_4\end{pmatrix}
\end{equation*}
and, therefore, the mapping cone is exact. This implies that \eqref{eq:quasi_iso_3} is a quasi-isomorphism (see Corollary $10.41$ in \cite{rotman}).
\end{proof}

\subsection{Complexity and correctness for the log-path algorithm} \label{app:log_path_algo}

\logpathalgo*

\begin{proof}[Proof of Theorem \ref{thm:logpath_theorem}]
The correctness of the algorithm is a direct consequence of Theorem \ref{thm:quais_iso_logpath}. In Step 1, we build $\mathcal{L}_\sigma$ for all input simplices. This requires iterating through all simplex grades once and adding $O(n)$ relations and syzygies, which costs $O(n)$ time. In Step 2, we have to go over all generators $g_\sigma^x$ and find boundary generators $g_\tau^y$ with $y\leq x$ for all facets $\tau$ of $\sigma$. Such a $g_\tau^y$ can be found in $O(\log n_\tau)$ time and, thus, the image of each $g_\sigma^x$ can be determined in $O(d\log k)$ time. Hence, overall this step takes $O(nd\log k)$ time. For Step 3, we note that all involved matrices have $O(n)$ columns and, by construction, each column of $f^0_i$ has at most $d$ non-zero entries while each column of $p^1_i$ has exactly two non-zero entries. Therefore, we can compute the sparse matrix product $f^0_i\circ p^1_i$ and $f^0_{i-1}\circ f^0_i$ in $O(nd^2)$ time. The columns of these products contain at most $d^2$ pairs of generators $g^x_\sigma$, $g_\sigma^{x'}$ which have to be connected by a shortest monotone path. Such a path can be found in $O(\log n_\sigma)$ time by Lemma \ref{lem:log-path-lemma}. Thus, overall Step 3 takes $O(nd^2\log k)$ time. For Step 4, we note that the matrix column sparsity of $f^1_i$ and $h^0_i$ is $O(d\log k)$ and $O(d^2\log k)$, respectively, while in $p^2_i$ every column has exactly three non-zero entries. Hence, we can compute all sparse matrix products in $O(nd^3\log^2 k)$ and the worst case column sparsity of the results is $O(d^3\log^2k)$. The resulting matrices contain cycles of length $O(d^3\log^2k)$ which can be filled by triangles in $O(d^3\log^2k)$ time by Lemma \ref{ref:cycle_lemma}. Therefore, overall, Step 4 takes $O(nd^3\log^2k)$ time. Since we assume the dimension $d$ is constant, we obtain an overall time complexity of $O(n\log^2 k)$. By the discussion of the column sparsity of the involved matrices, we also obtain that the description size of the output is $O(n\log^2 k)$.
\end{proof}

\end{document}